\newtheorem{lemma}{Lemma}[section]
\newtheorem{theorem}[lemma]{Theorem}
\newtheorem{proposition}[lemma]{Proposition}
\newtheorem{corollary}[lemma]{Corollary}
\theoremstyle{definition}
\newtheorem{definition}[lemma]{Definition}
\theoremstyle{remark}
\newtheorem{remark}[lemma]{Remark}
\newcommand{\mult}{\textrm{mult}}
\newcommand{\vol}{\textrm{vol}}
\def\supp{\text{Supp}}
\def\codim{\text{codim}}
\newcommand{\Ox}{\mathcal{O}_X}
\begin{document}

\title{Pluricanonical systems for 3-folds and 4-folds of general type}
\thanks{{\it Math classification:} 14J30, 14J35, 14J40, 14E05}
\thanks{\emph{Key words:} pluricanonical systems, pluricanonical maps, threefolds, fourfolds, $n$-folds}
\thanks{\today}

\author{Lorenzo Di Biagio}
\address{Dipartimento di Matematica, Sapienza -  Universit\`a di Roma - Piazzale Aldo Moro 5, 00185, Roma, Italy}
\email[1]{dibiagio@mat.uniroma1.it}
\email[2]{lorenzo.dibiagio@gmail.com}
\begin{abstract}
We explicitly find lower bounds on the volume of threefolds and fourfolds of general type in order to have non-vanishing of pluricanonical systems and birationality of pluricanonical maps. In the case of threefolds of  large volume, we also give necessary and sufficient conditions for the fourth canonical map to be birational. 
\end{abstract}

\maketitle

\setcounter{section}{0}
\setcounter{lemma}{0} 

\section{Introduction}
As it is well known, one of the guiding problems in algebraic geometry is to classify all algebraic varieties up to birational equivalence. Hence it is natural to study pluricanonical systems and the structure of the related pluricanonical maps, especially for varieties of general type. 

By the definition, if $X$ is a complex projective smooth variety of general type and dimension $d$ then the plurigenera $P_n =h^0(X,nK_X)$ grow like $n^d$ and $|nK_X|$ is birational for $n$ sufficiently large (meaning that the pluricanonical map $\phi_{|nK_X|}: X \dashrightarrow \mathbb{P}(H^0(X, nK_X))$ is birational onto its image). It is then legitimate to wonder if it is possible to find an explicit number $n_d$, potentially the minimal one, such that $n_d$ does not depend on $X$ (but only on $d$) and $P_n \not = 0$ or $|nK_X|$ is birational for all $n \geq n_d$.

For curves and surfaces of general type results of this kind are already known since a long time: by simple applications of Riemann-Roch theorem, for curves we have that $P_n \not = 0$ as soon as $n \geq 1$ and $|nK_X|$ is birational as soon as $n \geq 3$; for surfaces Bombieri proved in 1973 (see \cite{Bombieri}) that $P_n \not = 0$ for $n \geq 2$ and $|nK_X|$ is birational for $n \geq 5$.

For varieties of higher dimension recent advances have been made independently by Hacon--McKernan (see \cite{HcMcK}) and Takayama (see \cite{Takayama}) using ideas of Tsuji. They proved that actually, and for every $d$, this $n_d$ exists, even if their methods do not directly allow us to compute it. In the case of threefolds J.A. Chen and M. Chen proved in \cite{ChenChenII} that $P_n>0$ for every $n \geq 27$ and that $|nK_X|$ is birational for all $n \geq 73$. But if one requires in addition that some invariant of $X$ is big then it is possible to have better effective statements. This is the content of an article of G.T. Todorov (see \cite{Todorov}) who proved that if the volume of $X$ is sufficiently large then $P_2 \not = 0$ and $|5K_X|$ is birational. Note also that these results are optimal in the sense that there exist threefolds of arbitrarily large volume with $P_1=0$ and $|4K_X|$ not birational.

In this work we develop a strategy to effectively study non-vanishing (and size) of pluricanonical systems and birationality of pluricanonical maps for varieties of general type of any dimension and large volume, also with respect to the genus of the curves lying on the variety.  

As a matter of fact we succeed in improving Todorov's results for threefolds (also studying higher plurigenera and higher pluricanonical maps) and in finding effective results even for fourfolds and, partially, for fivefolds. 

We also manage to give characterizations for threefolds of general type with birational fourth pluricanonical map. We just need to assume that the volume is sufficiently large: to the author's knowledge this approach has never been considered before.\\


From this point on, we will give some details about the most significant results that we obtain.

In the case of threefolds of general type we prove the following theorems:
\begin{theorem} (see Theorem \ref{threefold}).
Let $X$ be a smooth projective threefold of general type such that $\vol(X) > \alpha^3$.  If  $\alpha \geq 879$ then $h^0(2K_X) \geq 1$ and if  $\alpha \geq 432(n+1)-3$ then $h^0((n+1)K_X)\geq n$, for all $n \geq 2$.
\end{theorem}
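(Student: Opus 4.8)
I would prove this by passing to a minimal model and running the ``volume $\Rightarrow$ non-klt centre'' machinery (Angehrn--Siu / Koll\'ar / Helmke, in the pluricanonical form of Hacon--McKernan, Takayama and Todorov), tracking constants carefully.

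\emph{Reduction.} Since plurigenera are birational invariants, replace $X$ by its minimal model, so that $K:=K_X$ is nef and big with $(K^3)=\vol(X)>\alpha^3$ and $X$ has terminal singularities (harmless for Kawamata--Viehweg and Nadel vanishing). It then suffices to prove two statements: (a) $|2K|\neq\emptyset$ when $\alpha\ge 879$; and (b) $|(n+1)K|$ separates $n$ general points of $X$ when $\alpha\ge 432(n+1)-3$, because separating $n$ general points produces $n$ linearly independent sections, hence $h^0((n+1)K)\ge n$. (Alternatively one may bootstrap: once $P_2\ge 1$, picking $0\ne D\in|2K|$ and using $0\to\mathcal O_X((n-1)K)\to\mathcal O_X((n+1)K)\to\mathcal O_D((n+1)K|_D)\to 0$ gives $h^0((n+1)K)\ge h^0((n-1)K)+h^0(D,(n+1)K|_D)$, but reaching the clean bound $\ge n$ for all $n$ still reduces to the surface case below.)

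\emph{Creating non-klt centres.} Fix general points $x_1,\dots,x_n$ (just $x_1$ for (a)). A Riemann--Roch estimate for $h^0(X,\lfloor tK\rfloor)$ (which grows like $t^3\vol(X)/6$) against the number of conditions imposed by prescribing high multiplicity at the $x_i$ produces an effective $\Q$-divisor $\Delta\sim_{\Q}\lambda K$, with $\lambda$ of order $(\text{const})\cdot n/\vol(X)^{1/3}$, such that $(X,\Delta)$ is not klt at each $x_i$; the hypothesis on $\alpha$ is exactly what forces $\lambda<n$ (resp.\ $\lambda<1$), which is the coefficient budget one can afford. After perturbing and tie-breaking, one may assume the minimal non-klt centre $Z_i$ through $x_i$ is unique and irreducible.

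\emph{Descent and vanishing.} If $\dim Z_i=0$, Nadel vanishing for $(n+1)K-\Delta$ (legitimate because $(n+1)K-\Delta-K\sim_{\Q}(n-\lambda)K$ is nef and big --- this is why the budget, only $1$ in case (a), must be respected) gives a section of $(n+1)K$ not vanishing at $x_i$; running this simultaneously at the $n$ general points yields the required sections. If $\dim Z_i>0$ one restricts to $Z_i$ (a surface, then a curve), replaces the problem by the analogous one for the restricted big divisor on $Z_i$, and on a curve invokes that a line bundle of degree $\ge 2g+1$ is very ample. The numerical loss accumulated over these (at most two) restriction steps is precisely what yields the factor $432$ per general point and the additive constant, and the genus of curves through the general point enters here, as in the finer statements of the paper.

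The main obstacle is the descent step: bounding the positivity (volume, self-intersection, degree) of the restricted polarization on the non-klt centre and, above all, handling low-genus --- rational or elliptic --- curves through a general point, where the degree estimates are weakest; this is why the effective constants are as large as $432(n+1)$, and why the seed case $P_2$, in which the coefficient budget for $\Delta$ is only $1$ (so there is essentially no room to lose in the descent), requires the separate and larger threshold $879$.
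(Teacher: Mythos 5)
Your outline reproduces the first half of the paper's argument (multiplicity construction of $\Delta\sim_{\Q}\lambda K_X$ with $\lambda<3/\alpha$, tie--breaking, Hacon--McKernan descent of the lc centre, Nadel vanishing at isolated points), but it has a genuine gap exactly where the theorem is hardest: the case in which the minimal lc centre $V_x$ through a very general point is a \emph{surface of small volume}. In that case the descent step does not merely ``lose a constant''--- it fails outright, for every $\alpha$. Cutting down a surface centre of volume $>\beta^2$ costs a multiplicative factor $(1+2/\beta)$ on the coefficient, and a surface of general type can have volume $1$; so even as $\alpha\to\infty$ (hence $\lambda\to 0$) the divisor produced after descending through such a surface and then through a genus-$2$ curve has coefficient at least about $(0+1)(1+2)(1+\tfrac12)-1=3.5$ times $K_X$, which exceeds the budget $n$ for $n\le 3$ and, for separating $n$ points, exceeds the total budget for \emph{every} $n$. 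No threshold on the volume of $X$ repairs this. The paper's proof handles this case by a completely different mechanism: McKernan's family-of-tigers result shows that if the small-volume surface centres cover a countably dense set then (after a birational modification) $X$ is fibred over a curve with these surfaces as fibres, and Proposition \ref{propositionfibration} (Kawamata's extension theorem plus Bombieri's results on minimal surfaces, giving $h^0((n+1)K_{X_b})\ge n$) then produces the sections directly. Your proposal contains no analogue of this dichotomy, and without it statement (a) in particular cannot be reached.

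Relatedly, you misattribute the source of the constants. The difficulty is not ``rational or elliptic curves through a general point'' --- on a variety of general type no such curves pass through a very general point (Remark \ref{generaltype}), so $g\ge 2$ automatically --- and it is not an accumulated loss over two restriction steps. The numbers $879$ and $432(n+1)-3$ come from the fibration case: Proposition \ref{propositionfibration} requires $\lambda\bigl(4(n+1)[\beta^2]-1\bigr)<1$, i.e.\ $\alpha\ge 12(n+1)[\beta^2]-3$, and optimizing the cut-off $\beta$ between ``large-volume surface centre (descend)'' and ``small-volume surface centre (fibre)'' forces $[\beta^2]=36$ when $g=2$, whence $432(n+1)-3$; the value $879$ for $n=1$ arises because there condition (\ref{a2}) rather than (\ref{a4}) is binding. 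Finally, a small caveat on your reduction: the paper proves $h^0((n+1)K_X)\ge n$ either by separating $n$ points (Lemma \ref{nonvanishing}) \emph{or}, in the fibration case, by surjecting onto $H^0((n+1)K_{X_b})$ for a single fibre; your framing in terms of point separation alone again presupposes that the descent always terminates in points.
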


\begin{theorem} (see Theorem \ref{birthreefold}).
Let $X$ be a smooth projective threefold of general type such that $\vol(X) > \alpha^3$. If $\alpha > 1917\sqrt[3]{2}$   then $|lK_X|$ gives a birational map for every $l \geq 5$.
\end{theorem}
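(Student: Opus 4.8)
The plan is the standard reduction to separating two \emph{general} points $x_1,x_2\in X$ by $|lK_X|$ (separation of general tangent directions then follows by the same argument applied to an infinitely near point), together with the observation that the numerical bound is worst at $l=5$: for odd $l\ge 5$ one may in any case reduce to that case using $h^0(2K_X)\ge 1$, which holds here since $1917\sqrt[3]{2}>879$. The value $l=5$ is optimal — it cannot be lowered, by the examples recalled in the introduction — so the argument must land on $5K_X=K_X+4K_X$ and cannot do better. The heart of the proof is a two–step dimension reduction driven by the largeness of $\vol(X)$: first one produces an effective $\Q$-divisor $D_1\sim_{\Q}t_1K_X$, with $t_1<1$ small (of order $\gamma/\alpha$ for an absolute constant $\gamma$), such that after a log resolution $(X,D_1)$ is not klt at $x_1$ and has a \emph{unique} minimal non-klt centre $V_1\ni x_1$; a tie–breaking/perturbation step arranges in addition that $x_2\notin V_1$, and symmetrically gives $D_2\sim_{\Q}t_2K_X$ isolating $x_2$.

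In the main case $\dim V_1=2$, write $V_1=S$ with normalisation $S^\nu$. Then $K_X|_S$ is big and one needs an \emph{effective} lower bound of the shape $\vol(S^\nu)\ge c_2\,\vol(X)^{2/3}$; this is precisely the kind of estimate that pins down the companion constant $432$ and that introduces the genus (and gonality) of the curves lying on $X$. One then repeats the construction on $S^\nu$, obtaining a divisor that isolates the image of $x_1$ with minimal centre a curve $\Gamma$, and finally on $\Gamma$ the problem becomes the elementary fact that a line bundle of degree $\ge 2\g(\Gamma)+1$ on a smooth curve separates two points; so at the bottom one needs a lower bound on $(K_X\cdot\Gamma)$ linear in $\g(\Gamma)$, again supplied by the largeness of $\vol(X)^{1/3}$. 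The cases $\dim V_1\in\{0,1\}$ follow the same scheme with fewer descents and are easier.

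The sections are transported back up the chain $X\supset S\supset\Gamma$ by successive vanishing theorems: a section on $\Gamma$ extends to $|K_{S^\nu}+(\text{big and nef part})|$ by Kawamata--Viehweg on $S^\nu$, and the resulting section extends to $|K_X+S+(\text{big and nef part})|$ on $X$ because $H^1\big(X,\,\mathcal{J}(D_1)\otimes(K_X+(\text{big and nef part}))\big)=0$ by Nadel vanishing, once the contributions of $S$, the curve–cutter on $S$ and the point–cutter on $\Gamma$ have been subtracted from $4K_X$ and the remainder shown to be $\Q$-linearly equivalent to a big and nef divisor. Keeping track of coefficients, $|5K_X|$ separates $x_1$ and $x_2$ as soon as
\[
1+t_1+t_2+(\text{the cutting coefficients on } S \text{ and } \Gamma)\ \le\ 4 ,
\]
and since every summand beyond the leading $1$ is $O(1/\alpha)$ with explicit constants coming from the three volume/degree descents, this holds whenever $\alpha>1917\sqrt[3]{2}$; for $l>5$ there is only more room.

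The main obstacle is carrying \emph{effective} positivity through the two adjunctions: (i) after each restriction and each perturbation one must still have a klt pair away from the chosen centre, so that Nadel/Kawamata--Viehweg vanishing applies — this forces $x_1,x_2$ into sufficiently general position and requires careful tie–breaking to keep the minimal centre unique and of the expected dimension; (ii) one must bound $\vol(S^\nu)$ from below and $(K_X\cdot\Gamma)/\g(\Gamma)$ from below, which is where the genuine numerical content (and the dependence on the geometry of curves on $X$) sits, and where the value of the constant is decided; and (iii) one must optimise the division of the available $4K_X$ among $S$, $D_1$, $D_2$ and the two lower–dimensional cutters. Once these are in place, the bookkeeping producing $1917\sqrt[3]{2}$ is routine; the factor $\sqrt[3]{2}$ reflects that here one must isolate \emph{two} points of $X$ simultaneously, rather than one as in a non-vanishing statement.
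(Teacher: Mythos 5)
Your outline has a genuine gap at its numerical core. In the main case $\dim V_1=2$ you assert that one "needs an effective lower bound of the shape $\vol(S^\nu)\ge c_2\,\vol(X)^{2/3}$" and, at the next step down, a lower bound on $(K_X\cdot\Gamma)$ supplied by the largeness of $\vol(X)^{1/3}$. No such bounds exist: take $X$ birational to $S\times C$ with $S$ a fixed minimal surface of volume $1$ and $C$ a curve of large genus; then $\vol(X)$ is arbitrarily large while $X$ is swept out by surfaces of volume $1$, and nothing prevents the minimal lc centre $V_1$ from being one of these. This is exactly the obstruction that the paper's proof (following Todorov) is built around, and your proposal has no mechanism to handle it --- the "cutting coefficients on $S$" are \emph{not} $O(1/\alpha)$ when $\vol(S)$ stays bounded, so the inequality you want to verify for $\alpha>1917\sqrt[3]{2}$ simply fails in that regime.

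The paper resolves this by a dichotomy on the volume of the codimension-one centres. If the centres have volume $>\beta^2$ (or codimension $\ge 2$), Takayama's inductive procedure (Proposition 5.3 of \cite{Takayama}) cuts down to isolated points and Nadel vanishing separates them, much as you describe. If instead the centres are surfaces of volume $\le\beta^2$ through a countably dense set of points, McKernan's lemma on families of tigers produces a map $X''\to B$ to a curve whose general fibre is such a surface; after reducing to the case where $\pi$ is birational, birationality of $|lK_X|$ is obtained not by adjunction down to a curve but by the surjectivity of the restriction $H^0((n+1)K_X)\to\bigoplus H^0((n+1)K_{X''_{b_i}})$ (Proposition \ref{propositionfibration}) combined with Bombieri's theorems on the surface fibres ($|lK_S|$ birational for $l\ge 5$ to separate points on one fibre, $h^0(2K_S)\ne 0$ to separate distinct fibres). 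The constant $1917\sqrt[3]{2}$ comes from optimising $\beta$ between these two branches, not from a chain of volume descents; the existence of the second branch is also why $|4K_X|$ admits only the "if and only if" characterization of Corollary \ref{4thmap} rather than unconditional birationality. Your proposal would need to be supplemented by this entire fibration branch to close.
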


In both cases we have much more precise estimates on $\alpha$, depending on $l$ and on the genus of the curves lying on $X$. See Theorem \ref{threefold} and Theorem \ref{birthreefold}, respectively.

 
We find analogous results also for fourfolds of general type. Using a lower bound on the volume of threefolds of general type given by J.Chen and M.Chen (see \cite{ChenChenII}) we prove:

\begin{theorem} (see Corollary \ref{explicitnonvanishing}).
Let $X$ be a smooth projective fourfold of general type such that $\vol(X) > \alpha^4$. If $\alpha \geq 1709$ then $h^0(X, (1+m)K_X) \geq n$ for all $n \geq 1$ and all $m \geq 191n$.
\end{theorem}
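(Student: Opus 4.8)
The goal is to derive an effective non-vanishing statement for fourfolds from the corresponding statement for threefolds (the first theorem of the excerpt, Theorem \ref{threefold}) combined with the Chen--Chen lower bound on volumes of threefolds of general type. The natural strategy is induction on the dimension via a general hyperplane-type divisor: if $X$ is a smooth projective fourfold of general type, one cuts by a suitable multiple of $K_X$ to produce a threefold $Y$ of general type sitting inside $X$, bounds $\vol(Y)$ from below in terms of $\vol(X)$, and then pulls back sections. So first I would set up the restriction exact sequence
\begin{equation*}
0 \to \mathcal{O}_X(mK_X - Y) \to \mathcal{O}_X(mK_X) \to \mathcal{O}_Y(mK_X|_Y) \to 0,
\end{equation*}
where $Y \in |K_X|$ (or a small multiple) is chosen general and smooth — this is where one needs the volume to be large enough that $|K_X|$, or $|kK_X|$ for a controlled $k$, already defines something with a usable member, and one wants $K_X|_Y$ to dominate $K_Y$ by adjunction, $K_Y = (K_X + Y)|_Y = 2K_X|_Y$ (if $Y \in |K_X|$). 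The point of taking $Y$ general of general type is that we can then feed $Y$ into Theorem \ref{threefold}.

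The second step is the volume bookkeeping. One needs a lower bound of the form $\vol(Y) \geq c\,\vol(X)$ (or $\vol(Y)^{1/3} \gtrsim \vol(X)^{1/4}$ in the relevant normalization), which follows from the fact that $\vol_Y(K_X|_Y) \geq \vol_X(K_X)/\text{(something)}$ when $Y$ moves in a large linear system — concretely $K_X|_Y$ restricted to the threefold has volume comparable to $(K_X^3 \cdot Y)$, and since $Y \sim K_X$ this is $K_X^4 = \vol(X)$. Then $\vol(Y) = \vol_Y(K_Y) = \vol_Y(2K_X|_Y) = 8\,\vol_Y(K_X|_Y)$ up to the usual subtleties (one should really work with the positive part of a Zariski/Fujita-type decomposition, or use the Chen--Chen inequality $\vol(X) \geq$ explicit constant which feeds the threefold volume into the fourfold volume — the phrasing "Using a lower bound on the volume of threefolds" suggests precisely that the Chen--Chen bound is used to guarantee $\vol(Y) > \alpha_3^3$ for the right $\alpha_3$). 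So the arithmetic is: require $\alpha = \alpha_4$ large enough that the induced $\alpha_3$ exceeds $432(n+1)-3$ (resp. $879$), and track the multiplier shift: a section of $h^0(Y, n(K_X|_Y))$ lifts, modulo the kernel $H^0(X,(n-1)K_X)$ vanishing appropriately, and $n(K_X|_Y)$ corresponds via adjunction to roughly $(n/2)K_Y$, which explains the factor relating $m$ and $n$ in the statement (the "$m \geq 191n$" is $191 \approx 432/2$-ish after absorbing the Chen--Chen constant and rounding).

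The third step is to chase the cohomology: from the exact sequence, $h^0(X, mK_X) \geq h^0(Y, mK_X|_Y) - h^1(X, (m-1)K_X)$, and one uses Kawamata--Viehweg vanishing (valid since $(m-1)K_X - K_X = (m-2)K_X$ is big and nef after passing to a suitable model, or one argues on the minimal model / uses the fact that for $m \geq 2$ the relevant $h^1$ vanishes by the standard argument) to kill the obstruction term, so that the $n$ independent sections on $Y$ produced by Theorem \ref{threefold} lift to $n$ independent sections on $X$. The main obstacle I expect is the first step: ensuring that a general member $Y \in |K_X|$ (or $|kK_X|$) exists, is irreducible, has at worst mild singularities, and is of general type — this is exactly why one cannot do better than "large volume", and it requires either Todorov-style / Chen--Chen-style estimates on $h^0(K_X)$ for fourfolds of large volume, or replacing $|K_X|$ by a bounded multiple and carefully absorbing that multiple into the constant $1709$ and the slope $191$. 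Getting all the rounding to land on the clean values $1709$ and $191$ is routine but delicate; the conceptual content is entirely in the hyperplane-section reduction plus Theorem \ref{threefold}.
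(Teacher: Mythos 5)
Your proposed route (cut $X$ by a general member $Y$ of $|K_X|$ or $|kK_X|$, bound $\vol(Y)$ from below, apply Theorem \ref{threefold} to $Y$, lift sections by a vanishing theorem) is not the argument of the paper, and it has a genuine gap at its very first step. The paper itself exhibits (Remark \ref{controesempio}) threefolds of arbitrarily large volume with $h^0(K_X)=0$, and the same product construction $S\times C$ with $S$ a Godeaux-type variety produces fourfolds of arbitrarily large volume with $h^0(K_X)=0$; so there is in general no member of $|K_X|$ to cut with, and knowing that $|kK_X|\neq\emptyset$ for a controlled $k$ is essentially the statement you are trying to prove -- the argument is circular. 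Even granting a member $Y$, a general element of a big but non-free linear system need not be irreducible or reduced, and the inequality $\vol(Y)\gtrsim\vol(X)$ you invoke is not available: restricted volumes along a non-moving divisor can drop drastically, and the adjunction bookkeeping $\vol(Y)=8\vol_Y(K_X|_Y)$ presupposes exactly the positivity you have not established. Your use of the Chen--Chen bound is also inverted: it is a universal lower bound ($\vol\geq 1/2660$) for \emph{all} threefolds of general type, not a device for comparing $\vol(Y)$ with $\vol(X)$. Finally, the $h^1$-vanishing you want is not Kawamata--Viehweg as stated, since $K_X$ is big but not nef.

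The paper's actual proof (Theorem \ref{dimensionealta} specialized to $d=4$ in Corollary \ref{explicitnonvanishing}) never cuts $X$ by a pluricanonical divisor. Instead, the volume hypothesis produces, at each very general point $x$, a $\mathbb{Q}$-divisor $D_x\sim\lambda_xK_X$ with $\lambda_x<4/\alpha$ having an exceptional log canonical centre $V_x$ at $x$; the Hacon--McKernan cutting theorem is then applied repeatedly to lower $\dim V_x$ to zero, at the cost of multiplying the coefficient by factors $(\mu_i+1)$ where $\mu_i=i/\sqrt[i]{v_i}$ and $v_i$ is a lower bound for the volume of $i$-dimensional subvarieties through a very general point. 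This is where Chen--Chen enters: $v_3=1/2660$ gives $\mu_3=3\sqrt[3]{2660}$, and with $v_2=1$, $v_1=2g-2$ ($g=2$) one gets $M=[3(3\sqrt[3]{2660}+1)\cdot\tfrac{3}{2}]=191$ and the threshold $\alpha\geq 1709$ from Remark \ref{dimensionealtarmk}; Nadel vanishing (Lemma \ref{nonvanishing}) together with Lemma \ref{tanti divisori} then yields $h^0((m+1)K_X)\geq n$ for $m\geq nM$. Note that Theorem \ref{threefold} is not used at all here: the fibration trick that makes the threefold theorem sharp is explicitly unavailable in dimension four, which is why the fourfold constants are so much worse.
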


\begin{theorem} (see Corollary \ref{explicitbirationality}). Let $X$ be a smooth projective fourfold of general type such that $\vol(X) > \alpha^4$. If $\alpha \geq 2816$ then $|l K_X|$ gives a birational map for every $l \geq 817$.
\end{theorem}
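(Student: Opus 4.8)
The strategy is to deduce the fourfold birationality statement from the corresponding effective result for threefolds (Theorem \ref{birthreefold}) via an inductive/slicing argument on pluricanonical divisors, combined with the volume lower bound of J. Chen and M. Chen for threefolds of general type. Concretely, given a smooth projective fourfold $X$ of general type with $\vol(X) > \alpha^4$, the plan is: (i) pass to a suitable general member $S \in |m_0 K_X|$ (a threefold of general type, after resolution) for a small fixed $m_0$; (ii) control $\vol(S)$ from below in terms of $\vol(X)$ and $m_0$ by adjunction / restriction estimates; (iii) apply the threefold result to $S$ to get birationality of $|l'K_S|$ for $l'$ in an explicit range; (iv) lift birationality of $|lK_X|$ from separating points on a single $S$ plus separating the divisors $S$ themselves, using the standard Kawamata–Viehweg / Nadel vanishing package to produce the needed sections of $(l - m_0)K_X$ restricting onto $|l'K_S|$.

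In more detail: First I would choose $m_0$ so that $|m_0 K_X|$ defines a birational (or at least generically finite, movable) system, and so that $\vol(X) > \alpha^4$ forces $\vol(S) > \beta^3$ with $\beta$ above the threefold threshold $1917\sqrt[3]{2}$ from Theorem \ref{birthreefold}; here the Chen–Chen lower bound on threefold volumes enters to guarantee that the sliced threefold is ``large enough'' in the precise numerical sense needed. The key restriction estimate is of the shape $(K_X|_S)^3 \gtrsim \tfrac{1}{m_0}\,\vol(X)$ up to explicit constants, after running an MMP or using asymptotic multiplier ideals to make $S$ have the right positivity. Then, for $l$ large, write $lK_X = (l - m_0 - 1)K_X + (K_X + S)$ and apply adjunction: sections of $l'K_S$ (which separate general points of $S$ once $l' \geq 5$ and $\beta$ is large, by Theorem \ref{birthreefold}) lift to sections of $lK_X$ via the surjection $H^0(X, lK_X) \twoheadrightarrow H^0(S, lK_X|_S)$ coming from $H^1(X, lK_X - S) = 0$. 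Separating two general points in distinct slices is handled by the fact that $|m_0 K_X|$ (or a bounded multiple) already separates them.

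The bookkeeping then consists of: tracking how large $l$ must be so that $l - m_0 \geq l' $ with $l'$ in the admissible threefold range \emph{and} $l - m_0$ large enough to kill the relevant $H^1$; tracking how the constant $2816$ arises as roughly $\big(\text{(threefold constant)} \cdot (\text{slicing loss})\big)^{1/?}$ combined with the Chen–Chen volume bound; and checking the case analysis on whether $|K_X|$, $|2K_X|$, etc.\ are already birational (in which case one is done trivially). I expect the main obstacle to be step (ii): obtaining a clean, explicit lower bound for $\vol(S)$ in terms of $\vol(X)$ that is strong enough to push $\beta$ past the threefold threshold without blowing up the final constant. This requires either a careful MMP argument to arrange $S$ to be (a resolution of) a minimal model with good intersection numbers, or a multiplier-ideal argument controlling the restricted volume $\vol_{X|S}(K_X)$; the explicit constants $191$, $817$, $2816$ presumably come out of optimizing this restriction inequality together with the numerics of Theorem \ref{birthreefold} and the Chen–Chen bound, and verifying $l \geq 817$ suffices is the delicate endpoint computation.
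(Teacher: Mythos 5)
Your proposal takes a route that is genuinely different from the paper's, and as written it has gaps that I do not think can be repaired without importing essentially the paper's actual argument. The paper deduces Corollary \ref{explicitbirationality} from Theorem \ref{birationalitydimalta}, which never slices by a member of $|m_0K_X|$: it runs Takayama's inductive procedure (\cite[Proposition 5.3]{Takayama}) directly on the fourfold, using the volume hypothesis to produce, at two very general points, a divisor $D\sim_{\mathbb Q}aA$ whose non-klt locus is zero-dimensional at one of them, and then cutting down the dimension of the intermediate log canonical centres. The Chen--Chen bound $1/2660$ enters not as a bound on the volume of a pluricanonical slice but as the lower bound $v_3$ for the volume of a \emph{three-dimensional log canonical centre} through a very general point; it produces the factor $r_3=3\sqrt[3]{5320}$ in $\overline{s}=2\prod(1+r_i)-2$ and $\overline{t}=\sqrt[4]{2}\cdot 4\prod(1+r_i)$, and with $g=2$ (so $r_1=1$, $r_2=2\sqrt2$) these give $l\geq 817$ and $\alpha\geq 2816$. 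Theorem \ref{birthreefold} and its constant $1917\sqrt[3]{2}$ play no role in the fourfold case.

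The concrete gaps in your plan are these. First, step (i) presupposes an effective $m_0$, independent of $X$, for which $|m_0K_X|$ is movable or birational; no such $m_0$ is available at this point (it is close to what one is trying to prove), and even nonvanishing of $|m_0K_X|$ for small $m_0$ requires Corollary \ref{explicitnonvanishing}, whose bound $m\geq 191n$ already destroys the numerology. Second, the restriction estimate $\vol(S)\gtrsim\vol(X)/m_0$ in step (ii) is unjustified: $K_X$ is only big, a general member $S\in|m_0K_X|$ can be highly singular, and relating the volume of a resolution of $S$ to $(K_X|_S)^3$ or to the restricted volume $\vol_{X|S}(K_X)$ requires control of the singularities of $S$ (or an MMP with explicit discrepancy bookkeeping) that you do not supply. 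Third, the vanishing $H^1(X,lK_X-S)=0$ does not follow from Kawamata--Viehweg since $(l-m_0-1)K_X$ is big but not nef; one must twist by a multiplier ideal, and then the surjection only reaches $H^0(S,lK_X|_S\otimes\mathcal J|_S)$, so one has to prove triviality of the restricted multiplier ideal -- this is exactly the delicate point that the paper's Proposition \ref{propositionfibration} handles in the threefold fibration case and that is avoided entirely in dimension four. Finally, $lK_X|_S=\frac{l}{1+m_0}K_S$ is only a $\mathbb Q$-multiple of $K_S$, so Theorem \ref{birthreefold} does not apply to it directly. Any one of these could perhaps be addressed, but together they show the proposal is not a proof of the stated result.
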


As before, we have more precise estimates on $\alpha$, depending on the genus of the curves lying on $X$. See Corollary \ref{explicitnonvanishing} and Corollary \ref{explicitbirationality}, respectively.

In the case of varieties of general type of dimension $d$, when $l$ is sufficiently large, we also find functions $\alpha_1(d,l), \alpha_2(d,l)$ such that if $\vol(X)>\alpha_1(d,l)^d$ then either $h^0(lK_X) \not = 0$ or $X$ is birational to a fibre space over a curve such that the general fibre has small volume (see Theorem \ref{fivefoldnonvanishing}) and if $\vol(X)>\alpha_2(d,l)^d$ then either $|lK_X|$ is birational or $X$ is birational to a fibre space over a curve such that the general fibre has small volume (see Theorem \ref{fivefoldbirationality}); both these functions depend on the lower bounds of the volume of varieties of dimension equal or smaller than $d-2$, thus allowing us to find explicit results also in the case of fivefolds.

Another interesting question that arises naturally when dealing with threefolds of general type is to study when $|4K_X|$ is birational. It is clear that $|4K_X|$ cannot be birational if $X$ is birationally equivalent to a fibration over a curve $B$ such that the general fibre is a minimal surface $S$ with $K_S^2=1$ and with geometric genus $=2$, since in this case $|4K_S|$ is not birational. In general the converse does not hold (see Remark \ref{controesempiomappa4canonica}), but it turns out that it actually holds when the volume of $X$ is sufficiently large. We prove the following: 
\begin{theorem} (see Corollary \ref{4thmap}).
Let $X$ be a smooth projective threefold of general type such that $\vol(X) > \alpha^3$. If $\alpha > 6141\sqrt[3]{2}$ 
 then $|4K_X|$ does not give a birational map if, and only if, $X$ is birational to a fibre space $X''$, with $f: X'' \rightarrow B$, where $B$ is a curve, such that the general fiber $X''_b$ is a smooth minimal surface of general type with volume $1$ and geometric genus $p_g=2$.
\end{theorem}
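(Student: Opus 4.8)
The plan is to prove the two implications separately: the reverse one is an immediate consequence of Bombieri's classification of pluricanonical maps of surfaces, while the forward one is where all the work lies and where the hypothesis $\vol(X)>\alpha^3$ is used.

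For the \emph{``if''} part, suppose $X$ is birational to a fibre space $f\colon X''\to B$ over a curve $B$ with general fibre $X''_b$ a smooth minimal surface of general type with $\vol(X''_b)=K_{X''_b}^2=1$ and $p_g(X''_b)=2$. Since the numerical class of $X''_b$ is that of a general fibre, its normal bundle in $X''$ is trivial, so adjunction gives $4K_{X''}|_{X''_b}=4K_{X''_b}$, and therefore the restriction map $H^0(X'',4K_{X''})\to H^0(X''_b,4K_{X''_b})$ realises $\phi_{|4K_{X''}|}$ restricted to $X''_b$ via a linear subsystem of $|4K_{X''_b}|$. By \cite{Bombieri}, for a minimal surface of general type with $K^2=1$ and $p_g=2$ the system $|4K|$ already fails to separate two general points, hence so does every subsystem; thus $\phi_{|4K_{X''}|}$ does not separate two general points of a general fibre of $f$, and since the pluricanonical map of $X$ agrees birationally with that of $X''$ we conclude that $|4K_X|$ is not birational. (This is essentially the observation already recorded in the Introduction and in Remark~\ref{controesempiomappa4canonica}.)

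For the \emph{``only if''} part I would assume that $|4K_X|$ is not birational and run the volume-sensitive argument underlying Theorem~\ref{birthreefold}, but with $l=4$. Concretely, the largeness of $\vol(X)$ forces, via a non-vanishing statement of the type of Theorem~\ref{threefold} together with a Riemann--Roch estimate, enough sections of small pluricanonical systems that one can attempt the usual dimension reduction: cut $X$ with a general pluricanonical surface, then with a general pluricanonical curve on it, and invoke Bombieri's surface results. This chain proves birationality of $|4K_X|$ \emph{unless} the reduction is obstructed by a fibration, namely $X$ is birational to a fibre space $X''\to B$ over a curve whose general fibre $F$ is a surface of general type with $\vol(F)$ bounded above by an explicit function of $\alpha$ and with $|4K_{F}|$ non-birational on the minimal model of $F$ — the fibred case cannot be excluded precisely because $|4K_S|$ is not birational for \emph{all} surfaces $S$ of general type. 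The value $\alpha>6141\sqrt[3]{2}$ is calibrated so that this a priori bound yields $\vol(F)<2$, hence $K_{F_0}^2=1$ for the minimal model $F_0$ of $F$ (recall $K^2\ge1$ for a minimal surface of general type). Passing to a relative minimal model of the fibration over $B$ we may take the general fibre to be the smooth minimal surface $F_0$ with $\vol(F_0)=K_{F_0}^2=1$; since $|4K_{F_0}|$ is not birational, \cite{Bombieri} forces $p_g(F_0)=2$. This is exactly the asserted structure.

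The main obstacle is the forward implication, and within it two points. First, showing that when the reduction fails the obstructing fibration is over a \emph{curve} (not a surface) with an \emph{effective} upper bound on the volume of the general fibre: this is the heart of the machinery, and it is here that the constant $6141\sqrt[3]{2}$ has to be tuned so that the bound collapses to $\vol(F)=1$. Second, the (standard, but not entirely automatic) passage to a birational model of the fibration whose general fibre is smooth and minimal, so that Bombieri's classification applies verbatim.
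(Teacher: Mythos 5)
Your overall architecture is the paper's: the ``if'' direction is the easy restriction-to-a-fibre argument (which the paper dismisses as trivial), and the ``only if'' direction re-runs the proof of Theorem \ref{birthreefold} with $l=4$, the only surviving obstruction being the fibration case. However, your account of how the forward direction closes contains a genuine error. You claim that $\alpha>6141\sqrt[3]{2}$ is ``calibrated so that the a priori bound yields $\vol(F)<2$'' and that ``the bound collapses to $\vol(F)=1$''. That is not how the constant arises, and the calibration you describe is impossible with this machinery: the fibration alternative occurs when the codimension-one lc centre has volume at most $\beta^2$, and the non-fibration branch forces $\beta$ to satisfy (\ref{birbeta1}), i.e.\ $\beta>4\sqrt{2}g/(g(l-1)-(l+1))$, which in the worst case $g=2$, $l=4$ reads $\beta>8\sqrt{2}$; so the machinery only bounds the fibre volume by $\beta^2\approx 128$, nowhere near $2$. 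The constant $6141\sqrt[3]{2}=3\sqrt[3]{2}\left(4\cdot 4\cdot 128-1\right)$ is instead exactly condition (\ref{biralpha4}) of Proposition \ref{propositionfibration} with $l=4$ and $[\beta^2]=128$: it guarantees that the restriction map $H^0(4K_{X''})\to H^0(4K_{X''_b})$ is surjective, so that $|4K_{X''}|$ separates two general points lying on the same general fibre whenever $|4K_{X''_b}|$ is birational (points on distinct fibres are separated via $H^0(2K_{X''})$, and since $4$ is even condition (\ref{biralpha8}) is not needed).

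Consequently the identification of the fibre as having volume $1$ and $p_g=2$ does not come from any volume estimate: it comes entirely from Bombieri's classification, since the only minimal surfaces of general type with non-birational fourth pluricanonical map are those with $K^2=1$ and $p_g=2$. Your final step already invokes Bombieri to extract $p_g=2$ from the non-birationality of $|4K_{F_0}|$; the same citation yields $K_{F_0}^2=1$ for free, so the (unobtainable) bound $\vol(F)<2$ is both wrong and unnecessary. With that step replaced by the direct appeal to Bombieri's classification, your sketch coincides with the paper's proof.
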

Again, we have better estimates on $\alpha$ depending on the genus of the curves on $X$: see Corollary \ref{4thmap}.

We also prove some analogous results about the birationality of $|3K_X|$ and $|2K_X|$, but in this case we are bound to add some hypotheses (cf. Corollary \ref{3rdmap} and \ref{2ndmap}, respectively). 

The birationality of $|4K_X|$ has already been analyzed also by Lee, Dong, M.Chen, Zhang. Actually both Dong in \cite{Dong} and Chen--Zhang in \cite{ChenZhang}, requiring that the geometric genus (rather than the volume) of $X$ is sufficiently large ($h^0(K_X) \geq 7$ for Dong, $h^0(K_X) \geq 5$ for Chen--Zhang),  give characterizations for the birationality of the fourth pluricanonical map. Note that the largeness of the geometric genus is not implied by the largeness of the volume (see Remark \ref{controesempio}).

\section{Preliminaries} \label{preliminaries}
\subsection{Notation}

We will work over the field of complex numbers, $\mathbb{C}$. 
A $d$-{\textit{fold}} is a variety of dimension $d$. We will usually deal with closed points of schemes, unless otherwise specified.\\


Unless otherwise specified a \textit{divisor} or a $\mathbb{Q}$-\textit{divisor} is meant to be Weil. A divisor is called $\mathbb{Q}$-\textit{Cartier} if an integral multiple is a Cartier divisor. Of course when we work on smooth varieties Weil and Cartier divisors coincide.\\

Let $q \in \mathbb{Q}$: we write $[q]$, $\{q\}$ for the round-down and fractional part of $q$, respectively. Recall that $[q]$ is the greatest integer $\leq q$ and $\{q\}=q-[q]$. \\

If $X$ is a variety and $D$ a Weil-$\mathbb{Q}$-divisor on $X$, when writing $D=\sum_i q_i D_i$ we will assume that the $D_i$'s are distinct prime divisors. Given the case, we also define the round-down of $D$, $[D]$, as $[D]:=\sum_i [q_i]D_i$.\\

A projective morphism $f: X \rightarrow Y$ is called an (algebraic) {\textit{fibre space}} (according to Mori) if $X$, $Y$ are smooth projective varieties, $f$ is surjective and $f_*(\mathcal{O}_X)=\mathcal{O}_Y$. Notice that, under this definition, $f_*(\mathcal{O}_X)=\mathcal{O}_Y$ is the same as requiring $f$ to have connected fibres.

\subsection{Topological issues}
In this section we will recall some basic definitions and state some easy results of topological flavour that will be used in the proof of the main theorems.

\begin{definition}
Let $X$ be a variety. Let $P \subseteq X$. $P$ is called \textit{very general} if it is the complement of a countable union of proper closed subvarieties of $X$. $P$ is called \textit{countably dense} if it is not contained in the union of countably many proper closed subvarieties of $X$.
\end{definition}

As we will see in the following lemma, countable density is a property stronger than Zariski-density but not as much constraining as being very general. If (very) general sets will usually be the starting point of our analysis it is also true that manipulating these sets leads us to face countably dense sets rather than other (very) general sets. For example if we randomly decompose a very general set into a finite (or countable) union of disjoint sets then we loose information about being very general but we rest assured that at least one of the new sets is countably dense. 

\begin{lemma} \label{general set}
Let $X$ be a variety of dimension $d \geq 1$ and let $A,B,C \subseteq X$.
\begin{enumerate}
\item If $A$ is countably dense then $A$ is Zariski-dense. 
\item If  $A$ is very general then $A$ is countably dense (and hence Zariski-dense).
\item If $A$ is countably dense and $B$ is very general, then $A \cap B$ is countably dense.
\item If $A\setminus B \subseteq C$, with $A$  very general, then either $B$ is countably dense or $C$ contains a very general subset of $X$.
\end{enumerate}
\end{lemma}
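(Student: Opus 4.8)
The plan is to prove the four statements in order, since the later ones build on the earlier ones.

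\textbf{Parts (1) and (2).} For (1), suppose $A$ is countably dense but not Zariski-dense. Then its Zariski closure $\overline{A}$ is a proper closed subvariety of $X$, so $A \subseteq \overline{A}$ exhibits $A$ as contained in a union of (one) proper closed subvariety, contradicting countable density. For (2), if $A$ is very general then $A = X \setminus \bigcup_{i \in \mathbb{N}} Z_i$ with each $Z_i$ a proper closed subvariety. If $A$ were contained in a countable union $\bigcup_j W_j$ of proper closed subvarieties, then $X = \bigcup_i Z_i \cup \bigcup_j W_j$ would be a countable union of proper closed subvarieties; this is impossible over $\mathbb{C}$ because $X$ is irreducible of positive dimension and $\mathbb{C}$ is uncountable (a standard Baire-category / cardinality argument: a variety of positive dimension over an uncountable field has uncountably many points, and in fact cannot be written as a countable union of proper subvarieties). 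Hence $A$ is countably dense, and Zariski-density follows from (1).

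\textbf{Part (3).} Write $B = X \setminus \bigcup_i Z_i$ with $Z_i$ proper closed subvarieties, so $X \setminus B = \bigcup_i Z_i$. Suppose for contradiction that $A \cap B$ is not countably dense, so $A \cap B \subseteq \bigcup_j W_j$ with $W_j$ proper closed. Then
\[
A = (A \cap B) \cup (A \setminus B) \subseteq \bigcup_j W_j \cup \bigcup_i Z_i,
\]
a countable union of proper closed subvarieties, contradicting that $A$ is countably dense.

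\textbf{Part (4).} This is the one requiring an actual idea rather than bookkeeping, and I expect it to be the main obstacle. Write $A = X \setminus \bigcup_i Z_i$ with $Z_i$ proper closed. From $A \setminus B \subseteq C$ we get $A \subseteq B \cup C$, hence
\[
X = \bigcup_i Z_i \cup B \cup C.
\]
Assume $B$ is not countably dense: then $B \subseteq \bigcup_j W_j$ for proper closed subvarieties $W_j$. Substituting, $X = \bigcup_i Z_i \cup \bigcup_j W_j \cup C$, so
\[
C \supseteq X \setminus \Bigl( \bigcup_i Z_i \cup \bigcup_j W_j \Bigr),
\]
and the right-hand side is visibly a very general subset of $X$ (the complement of a countable union of proper closed subvarieties). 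Thus $C$ contains a very general subset, which is the desired conclusion. The only subtlety to get right is that the union $\bigcup_i Z_i \cup \bigcup_j W_j$ is still a \emph{countable} union of \emph{proper} closed subvarieties — both facts are immediate since a countable union of countable families is countable and each member is proper — so nothing more delicate than that is needed.
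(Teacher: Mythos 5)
Your proof is correct, and it is exactly the routine argument the paper has in mind (the lemma is stated without proof as one of the "easy results of topological flavour"); all four parts reduce, as you do, to the fact that an irreducible complex variety of positive dimension is not a countable union of proper closed subvarieties. The only cosmetic point is in part (1): $\overline{A}$ need not be irreducible, so one should say $A$ is contained in the \emph{finite} union of the irreducible components of $\overline{A}$, each a proper closed subvariety — which still contradicts countable density.
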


If we have a family of points and divisors through them, then the countable density of the set of points is the right property that allows us to extract a finite number of divisors that are ``unrelated'', in a certain sense:

\begin{lemma} \label{tanti divisori}
Let $X$ be a variety of dimension $\geq 1$ and $A$ a countably dense subset of $X$. Suppose that for all $x \in A$ there exists a divisor $D_x$ such that $x \in \supp(D_x)$. Then there exist $x_1, x_2 \in A$ such that $x_1 \not \in \supp(D_{x_2})$ and $x_2  \not \in \supp(D_{x_1})$.

More generally, under the same hypotheses, for every $n \in \mathbb{N}$ there exist  $x_1, \ldots, x_n \in A$ such that $x_i \not \in \supp(D_{x_j})$ for every $i \not = j$, $1 \leq i,j \leq n$. 

\end{lemma}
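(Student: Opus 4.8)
The plan is to prove the statement by induction on $n$, the case $n=1$ being trivial since a countably dense set is nonempty. For $z\in A$ set $T_z:=\{\,y\in A : z\in\supp(D_y)\,\}$. The inductive step rests on the following reduction: it is enough to exhibit \emph{one} point $x_1\in A$ for which
\[
A':=\{\,y\in A : y\notin\supp(D_{x_1})\ \text{and}\ x_1\notin\supp(D_y)\,\}=\bigl(A\setminus\supp(D_{x_1})\bigr)\setminus T_{x_1}
\]
is again countably dense. Indeed, applying the inductive hypothesis to $A'$ with the divisors $\{D_y\}_{y\in A'}$ yields $x_2,\dots,x_n\in A'$ with $x_i\notin\supp(D_{x_j})$ for all $i\ne j$ in $\{2,\dots,n\}$; and by the very definition of $A'$, each $x_i$ with $i\ge 2$ satisfies $x_i\notin\supp(D_{x_1})$ and $x_1\notin\supp(D_{x_i})$, so $x_1,\dots,x_n$ is the required tuple. (The first assertion of the lemma is the case $n=2$.)

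The whole problem is thus to produce such an $x_1$, and I expect this to be the only genuinely delicate point: one cannot simply choose $x_1$ at random and work inside $A\setminus\supp(D_{x_1})$ — which is countably dense by Lemma \ref{general set}(3), since $\supp(D_{x_1})$ is a proper closed subset — because the ``forbidden'' set $T_{x_1}$ can itself be countably dense. I would proceed by contradiction. Suppose that for \emph{every} $x\in A$ the set $A'_x:=(A\setminus\supp(D_x))\setminus T_x$ fails to be countably dense, say $A'_x\subseteq\bigcup_jW^{(x)}_j$ for some countable family of proper closed subvarieties. Then, writing $V_x:=\supp(D_x)\cup\bigcup_jW^{(x)}_j$ — still a countable union of proper closed subvarieties, because $x\in\supp(D_x)$ forces $D_x\ne 0$ and hence $\supp(D_x)$ to be a proper closed subset — one obtains
\[
A\setminus V_x\ \subseteq\ T_x\qquad\text{for every }x\in A .
\]

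Now I would invoke the (elementary) fact that a countably dense, in particular Zariski-dense, subset of a variety over $\mathbb{C}$ contains a \emph{countable} Zariski-dense subset, and pick such a subset $\{x_i:i\in\mathbb{N}\}\subseteq A$. Then $\bigcup_iV_{x_i}$ is still a countable union of proper closed subvarieties, so $X\setminus\bigcup_iV_{x_i}$ is very general and, by Lemma \ref{general set}(3), $A\setminus\bigcup_iV_{x_i}$ is countably dense, hence nonempty; choosing a point $y$ in it gives $y\in A\setminus V_{x_i}\subseteq T_{x_i}$, i.e.\ $x_i\in\supp(D_y)$, for every $i$. Thus $\{x_i:i\in\mathbb{N}\}\subseteq\supp(D_y)$; since $\supp(D_y)$ is closed this forces $X=\overline{\{x_i:i\in\mathbb{N}\}}\subseteq\supp(D_y)$, contradicting the properness of $\supp(D_y)$. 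This contradiction yields the desired $x_1$ and completes the induction.

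The auxiliary fact invoked above is where I would be most careful; it can be proved by a standard general-position argument: embed $X$ in some $\mathbb{P}^N$, and for each $d\ge 1$ pick $h^0(X,\mathcal{O}_X(d))$ points of $A$ in general position (they exist because $\mathcal{O}_X(d)$, being a restriction of $\mathcal{O}_{\mathbb{P}^N}(d)$, is globally generated), chosen so that no degree-$d$ hypersurface failing to contain $X$ passes through all of them; the union of these finite sets over all $d\ge 1$ is then a countable Zariski-dense subset of $A$. Apart from this point, the argument is just formal bookkeeping with the notion of countable density through Lemma \ref{general set}.
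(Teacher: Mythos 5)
Your proof is correct, but it is organized differently from the paper's. The paper argues directly: it extracts a countable Zariski-dense subset $B_1\subset A$, then inductively builds further countable Zariski-dense sets $B_{i+1}$ inside $A\setminus\bigcup_{k\le i}\bigcup_{b\in B_k}\supp(D_b)$ (which is still countably dense), and finally chooses $x_n,\dots,x_1$ backwards, each $x_i\in B_i$ avoiding the finitely many closed sets $\supp(D_{x_k})$, $k>i$; the two directions of the condition $x_i\notin\supp(D_{x_j})$ are thus handled asymmetrically (one by membership in the sets $V_i$, the other by Zariski-density of $B_i$), so the ``conflict set'' $T_x=\{y: x\in\supp(D_y)\}$ never has to be controlled as a set. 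You instead run a genuine induction on $n$ whose crux is the existence of a single point $x_1$ for which the symmetric conflict-free set $A'=(A\setminus\supp(D_{x_1}))\setminus T_{x_1}$ is again countably dense; your contradiction argument for this (covering $A$ by $\supp(D_{x_i})\cup T_{x_i}\cup(\text{countably many proper closed sets})$ over a countable Zariski-dense family $\{x_i\}$ and then producing $y$ with $\supp(D_y)\supseteq\{x_i\}_i$, forcing $\supp(D_y)=X$) is a twist not present in the paper and makes the inductive step cleanly modular. Both proofs ultimately rest on the same two facts: a countably dense set contains a countable Zariski-dense subset, and it cannot be covered by countably many proper closed subvarieties. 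One remark on the auxiliary fact you single out: your general-position argument via a projective embedding works, but there is a purely Noetherian shortcut --- among the closures of finite subsets of $A$ there is a maximal one, which by maximality must equal $\overline{A}=X$, so $A$ even contains a \emph{finite} Zariski-dense subset; this also avoids any quasi-projectivity assumption and suffices for your covering argument.
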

\begin{proof}
Take a countable, Zariski-dense set $B \subset A$. For all $b \in B$ consider $D_b$. $V:=A \setminus \cup_{b \in B}\supp(D_b)$ is non-empty (otherwise $A \subseteq\cup_{b \in B}\supp(D_b)$, contradiction). Let $x_1 \in V$. Since $B$ is Zariski-dense, $D_{x_1}$ cannot pass through $b$ for every $b \in B$. Let $x_2$ such that $x_2 \not \in \supp(D_{x_1})$ and we are done. 

For the general case choose $B_1:=B$ as before. We define $B_2, \ldots, B_{n-1}$ inductively: suppose we have already defined $B_2, \ldots, B_i$; since $V_i:=A \setminus \cup_{k=1}^i  \cup_{b \in B_k}\supp(D_b)$ is still countably dense, and hence Zariski-dense, we can choose a countable Zariski-dense set $B_{i+1} \subset V_i$.  Now we define $x_1, \ldots, x_n$ inductively. Choose a point $x_n \in V_{n-1}$. Suppose we have already defined $x_n, x_{n-1}, \ldots, x_{i+1}$. Since $B_{i}$ is Zariski-dense, there exists a point $x_{i}$ such that it does not belong to $\cup_{k=i+1}^n\supp(D_{x_k})$. $x_1, \ldots x_n$, defined in this way, respect the requirements on the associated divisors, and we are done.
\end{proof}

Note that Zariski-density is not enough to obtain the same conclusion: for example consider, on a curve, a  countable infinity of points $\{x_1, \ldots, x_n, \ldots\}$ and, for every $x_n$, the divisor $D_n=x_1 + \ldots + x_n$.

When we will study pluricanonical systems on a projective variety $X$ it will be clear that we can have better explicit results if we know that we do not need to deal with curves of small volume (i.e.\ of small genus). That is why we give the following
\begin{definition}
Let $X$ be a projective variety. Let $g \in \mathbb{N}^+$. Let $$\Omega_g:= \bigcup_{ \begin{subarray}{r}  C  \text{ curve } \subseteq X,  \\  g(C) < g  \end{subarray}} C$$ (where $g(C)$ is the geometric genus of the (possibly singular) curve $C$). Then we will say that $X$ is $g$-countably dense if $\Omega_g$ is countably dense, that is: $\Omega_g$ is not contained in the union of countably many proper closed subvarieties of $X$. 
\end{definition}

\begin{remark} \label{notcountablydense}
Clearly, if $X$ is not $g$-countably dense then $X$ is not $g'$-countably dense for every $g' \leq g$. 
Moreover if $X$ is not $g$-countably dense then, by definition, there exists a very general subset $\Lambda$ of $X$ such that for every $x \in \Lambda$ and every curve $C$ through $x$ then $g(C) \geq g$.
\end{remark}

\begin{remark} \label{generaltype}
If $X$ is of general type then there exists a very general subset $\Lambda$ of $X$ such that every subvariety through any point of $\Lambda$ is of general type. Hence such an $X$ is not $2$-countably dense. 
\end{remark}

\subsection{Volume and big divisors} \label{volume}

\begin{definition} 
Let $X$ be a variety of dimension $d$ and let $D$ be a Cartier integral divisor. Then the \textit{volume of D}, $\vol(D)$, is just $\limsup_{m \rightarrow + \infty} \frac{h^0(X, mD)\cdot d!}{m^d}$. This limsup is actually a limit and the definition can be naturally extended to $\mathbb{Q}$-Cartier divisors. The volume of a divisor does depend only on its numerical class. If $X$ is nonsingular and $K_X$ is its canonical bundle then $\vol(X):=\vol(K_X)$. Since the volume of a divisor is a birational invariant then if $X$ is singular take any desingularization $X'$ of $X$ and set $\vol(X):=\vol(X')$. If $\vol(D) >0$ then $D$ is called \textit{big}. If $K_X$ is big then $X$ is called a variety of \textit{general type}. For all these matters see \cite[2.2.C]{LazI}.
\end{definition}

Thus the volume of an integral divisor measures the number of its sections, but only asymptotically. Even so, one can hope (in certain cases) to obtain information also about actual multiples of the divisor: the key point is to find a specific subvariety and then prove that the restriction map (for the given divisor) is surjective. Both to produce the subvariety and to study the surjectivity of the restriction map, one needs to use particular techniques, such as the Tie Breaking (see \cite[Proposition 8.7.1]{KollarlibroCorti} and \cite[Theorem 3.7]{Brustet}) or Nadel's vanishing theorem (see \cite[Theorem 9.4.8]{LazII}), that require the divisor to be ample (or big and nef). When the divisor is not ample but only big then we can use local analogues: in fact a big divisor is ample outside a closed subset. The following definitions and lemma will make this clearer: 

\begin{definition}
Let $X$ be a variety, let $D$ be a $\mathbb{Q}$-Cartier divisor and let $p \in \mathbb{N}^+$ be such that $pD$ is integral. The \textit{stable base locus} of  $D$ is the algebraic set $\mathbb{B}(D)= \bigcap_{m \geq 1} Bs(|mpD|)$ (cf. \cite[ \S 1]{ELMNP2} or \cite[Definition 2.1.20, Remark 2.1.24]{LazI}). Unfortunately these loci do not depend only on the numerical class of $D$. Nakamaye then suggested to slightly perturb $D$: the \textit{augmented base locus} of $D$ is defined as $\mathbb{B}_+(D)=\mathbb{B}(D-\epsilon A)$ for any ample $A$ and sufficiently small $\epsilon \in \mathbb Q$. This definition does not depend on $A$ or on $\epsilon$ (provided it is sufficiently small). Moreover $D$ is big if and only if $\mathbb{B}_+(D)$ is a proper closed subset of $X$ (see \cite[ \S 1, in particular Example 1.7]{ELMNP2}).
\end{definition}

\begin{lemma} \label{fuorisupporto}
Let $X$ be a projective variety and $D$ a big $\mathbb{Q}$-Cartier divisor on $X$. Take any norm $\| \cdot\|$ on $N^1(X)_{\mathbb{Q}}$. Then there exists $\epsilon >0$ such that for every ample $\mathbb{Q}$-Cartier divisor $A$, $\|A\|< \epsilon$, and for every $x \not \in \mathbb{B}_+(D)$ there is an effective $\mathbb{Q}$-Cartier divisor $E$ such that $x \not \in \supp(E)$ and $D \sim_{\mathbb{Q}} A+E$.
\end{lemma}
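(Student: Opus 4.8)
The plan is to extract, straight from the \emph{definition} of the augmented base locus, a single ample $\mathbb{Q}$-divisor that does all the work, and then let the openness of the ample cone produce the uniform $\epsilon$. So first I would fix once and for all an ample $\mathbb{Q}$-Cartier divisor $A_0$ and a rational number $\epsilon_0>0$, small enough that $\mathbb{B}_+(D)=\mathbb{B}(D-\epsilon_0 A_0)$, and let $p\in\mathbb{N}^+$ be such that $p(D-\epsilon_0 A_0)$ is integral. Since $X$ is projective, $N^1(X)_{\mathbb{Q}}$ is a finite-dimensional $\mathbb{Q}$-vector space whose ample cone is open and contains the class of $\epsilon_0 A_0$, so there is $\epsilon>0$ — depending only on $D$, $A_0$, $\epsilon_0$ and on the chosen norm, but \emph{not} on $A$ nor on $x$ — such that $\epsilon_0 A_0-A$ is ample whenever $A$ is a $\mathbb{Q}$-Cartier divisor with $\|A\|<\epsilon$. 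Fixing this $\epsilon$ \emph{before} $A$ and $x$ are handed to us is really the only delicate point of the statement.

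Now take $A$ ample with $\|A\|<\epsilon$ and $x\notin\mathbb{B}_+(D)$. Because $D$ is big, $\mathbb{B}_+(D)=\mathbb{B}(D-\epsilon_0 A_0)$ is a proper closed subset, and by definition $\mathbb{B}(D-\epsilon_0 A_0)=\bigcap_{m\geq 1}Bs(|mp(D-\epsilon_0 A_0)|)$. Since $x$ lies outside this intersection, there is some $m\geq 1$ with $x\notin Bs(|mp(D-\epsilon_0 A_0)|)$; writing $q:=mp$, the system $|q(D-\epsilon_0 A_0)|$ is in particular non-empty, and we may pick an effective divisor $H_1\in|q(D-\epsilon_0 A_0)|$ with $x\notin\supp(H_1)$. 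On the other hand $\epsilon_0 A_0-A$ is ample, so for some $k\in\mathbb{N}^+$ the divisor $k(\epsilon_0 A_0-A)$ is integral and $|k(\epsilon_0 A_0-A)|$ is base-point free; in particular we may pick an effective $H_2\in|k(\epsilon_0 A_0-A)|$ with $x\notin\supp(H_2)$.

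Finally I would set $E:=\tfrac1q H_1+\tfrac1k H_2$. This is an effective $\mathbb{Q}$-Cartier divisor with $x\notin\supp(E)$, since $x$ avoids the supports of $H_1$ and of $H_2$, and
\[
E\;\sim_{\mathbb{Q}}\;(D-\epsilon_0 A_0)+(\epsilon_0 A_0-A)\;=\;D-A,
\]
so $D\sim_{\mathbb{Q}}A+E$, as required. I do not expect any genuine obstacle beyond the bookkeeping: everything reduces to Kodaira-type rewriting together with base-point-freeness of suitable multiples of ample divisors. The one thing to resist is splitting $D$ itself as (ample) $+$ (effective) via Kodaira's lemma, because the effective part of such a splitting need not avoid a prescribed point and, worse, the threshold $\epsilon$ would then depend on $x$; using instead the fixed decomposition $D=\epsilon_0 A_0+(D-\epsilon_0 A_0)$ coming from the definition of $\mathbb{B}_+$ is exactly what makes $\epsilon$ uniform. (Alternatively one could quote from \cite{ELMNP2} the existence of a decomposition $D\sim_{\mathbb{Q}}(\text{ample})+E_1$ with $\supp(E_1)=\mathbb{B}_+(D)$, but the argument above uses only what has already been recalled here.)
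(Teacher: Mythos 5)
Your proof is correct, and it reaches the same endpoint as the paper's by a slightly different mechanism. The paper's own argument is shorter: it cites the fact (from \cite{ELMNP2} and \cite{LazI,LazII}) that for \emph{every} ample $A$ of sufficiently small norm one has $\mathbb{B}_+(D)=\mathbb{B}(D-A)=Bs(|mD-mA|)$ for a suitable single $m$, and then, since $x\notin Bs(|mD-mA|)$, picks $F\in|mD-mA|$ avoiding $x$ and sets $E=F/m$. You instead fix one reference decomposition $D=\epsilon_0A_0+(D-\epsilon_0A_0)$ with $\mathbb{B}_+(D)=\mathbb{B}(D-\epsilon_0A_0)$, obtain the uniform $\epsilon$ from the openness of the ample cone around $\epsilon_0A_0$, and assemble $E$ from two effective pieces: one in $|q(D-\epsilon_0A_0)|$ avoiding $x$ (using that the stable base locus is an intersection of actual base loci), and one in a base-point-free multiple of the ample divisor $\epsilon_0A_0-A$. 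What your route buys is self-containedness: you only need the definition of $\mathbb{B}_+$ for a single fixed ample, not the independence of $\mathbb{B}(D-A)$ as $A$ varies among small amples, and the uniformity of $\epsilon$ is made completely explicit rather than absorbed into a citation. The cost is the extra splitting of $E$ and the appeal to base-point-freeness of ample divisors, which the paper's one-line argument avoids. Your closing remark correctly identifies the pitfall (a naive Kodaira decomposition would make $\epsilon$ depend on $x$), and both arguments also yield Remark \ref{fuorisupporto2} on skipping finitely many points.
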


\begin{proof}
By \cite[\S 1]{ELMNP2}, \cite[10.3.2]{LazII} and \cite[2.1.21]{LazI}, there exists $m \in \mathbb{N}$ such that $mD$, $mA$ are integral divisors and $\mathbb{B}_+(D)=\mathbb{B}(D-A)=Bs(|mD-mA|)$. Since $x \not \in \mathbb{B}_+(D)$ then there exists an effective divisor $F \in |mD-mA|$ such that $x \not \in \supp(F)$. Set $E:= F/m$. $D \sim_{\mathbb{Q}} A+E$ and we are done. 
\end{proof}

\begin{remark} \label{fuorisupporto2}
We could have chosen $E$ to skip $n$ points not in $\mathbb{B}_+(D)$.
\end{remark}

\subsection{Multiplier ideals and singularities of pairs}

First of all we recall some standard definitions:

\begin{definition} (see \cite[9.1.10, 9.3.55]{LazII} and \cite[0.4]{Kollar}). A \textit{pair}  $(X,\Delta)$ consists of a normal variety $X$ and a $\mathbb{Q}$-divisor $\Delta$ such that $K_X+\Delta$ is a $\mathbb{Q}$-Cartier $\mathbb{Q}$-divisor. The pair $(X, \Delta)$ is said to be \textit{effective} if $\Delta$ is effective. A projective birational morphism $\mu: X' \rightarrow X$ is said to be a \textit{log resolution} of the pair $(X,\Delta)$ if $X'$ is smooth,  \text{Exc}$(\mu)$ is a divisor and $\mu^{-1}(\supp(\Delta)) \cup \text{Exc}(\mu)$ is a divisor with simple normal crossing support.
\end{definition}

\begin{definition} (see\ \cite[\S 9.2.A]{LazII}).
Let $X$ be a smooth variety and let $D$ be a $\mathbb{Q}$-divisor on $X$. The \textit{multiplier ideal sheaf} $\mathcal{J}(D)=\mathcal{J}(X,D)$ is defined in the following way: fix any log resolution $\mu: X' \rightarrow X$ of $(X,D)$; then $\mathcal{J}(D):= \mu_*\mathcal{O}_{X'}\left(K_{X'/X} - \left[ \mu^*D\right] \right)$.
\end{definition}

\begin{definition} 
Let $(X, \Delta)$ be a pair and $\mu: X' \rightarrow X$ be a log resolution of the pair. We can canonically write $K_{X'} - \mu^*(K_X + \Delta) \equiv \sum a(E) E$, where the sum is taken over all prime divisors $E$. Given $x \in X$, $(X, \Delta)$ is said to be \textit{klt at $x$} or \textit{kawamata log terminal at $x$} (respectively: \textit{lc at $x$} or \textit{log canonical at $x$}) if for every $E$ such that $x \in \mu(E)$ we have that $a(E) > -1$ (resp.: $a(E) \geq -1$). $(X, \Delta)$ is  \textit{klt} or \textit{kawamata log terminal} (respectively: \textit{lc} or \textit{log canonical}) if it is klt (resp.: lc) at $x$ for every $x \in X$. We say that a subvariety $V \subset X$ is a \textit{lc centre} or \textit{log canonical centre} for the pair $(X, \Delta)$ if it is the image, through a certain $\mu$, of a divisor $E$ such that  $a(E)\leq -1$. The valuation corresponding to this divisor is called a \textit{log canonical place}. A log canonical centre $V$ is \textit{pure} if it is log canonical at the generic point of $V$. A log canonical centre $V$ is \textit{exceptional} if it is pure and there is a unique log canonical place lying over the generic point of $V$. We will denote by $LLC(X, \Delta, x)$ the set of all lc centres that pass through $x$. 
\end{definition}

If $(X,D)$ is effective and $X$ is smooth, then we can use equivalent definitions for klt and lc: $(X, D)$ is klt if $\mathcal{J}(X,D)=\mathcal{O}_X$; $(X, D)$ is lc if $\mathcal{J}(X, (1-\epsilon)D)=\mathcal{O}_X$ for all $0 < \epsilon < 1$. Analogously for the local statements. This justifies the following:

\begin{definition}
Let $(X, D)$ be an effective pair, with $X$ smooth and let $x \in X$. The \textit{log canonical threshold at $x$}, $lct(D,x)=lct(X,D,x)$, is just $\inf\{c>0 | \mathcal{J}(X,cD)_x \subsetneq \mathcal{O}_{X,x}\}$.  We will denote by $Nklt(X,D)$ the \textit{non-klt locus for $(X,D)$}, i.e.\ $\supp(\mathcal{O}_X/\mathcal{J}(X,D)) \subset X$ with the reduced structure. 
\end{definition}

Log canonical centres will be our main tool to produce subvarieties from which it is possible to pull back forms. Log canonical centres, in our case, are quite well behaved from this point of view: they can be made exceptional (using the Tie Breaking method) and their dimension can be cut down (see the original work by Angehrn--Siu in \cite{AngehrnSiu} and also \cite[Theorem 4.1]{HcMcK}, \cite[ \S 5]{Takayama}, \cite[Proposition 10.4.10]{LazII}). 

The following lemma about log canonical centres of codimension $1$ will be needed later:
\begin{lemma} \label{centrocod1}
Let $(X, \Delta)$ be a pair,  $\Delta=\sum_{i=1}^sd_i \Delta_i$ with $\Delta_i$ prime divisors and $d_i \in \mathbb{Q}$. If $W$ is a lc centre for $(X,\Delta)$ of codimension $1$ then there exists $\overline{i} \in \{1,\ldots,s\}$ such that $W=\Delta_{\overline{i}}$ and $d_{\overline{i}} \geq 1$. If moreover $W$ is pure then $d_{\overline{i}}=1$.
\end{lemma}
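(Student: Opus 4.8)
The plan is to unwind the definition of a log canonical centre in terms of a log resolution and then compare discrepancies along the strict transforms of the $\Delta_i$. First I would fix a log resolution $\mu: X' \to X$ of the pair $(X,\Delta)$; by definition $W$ is the image under $\mu$ of some prime divisor $E$ on $X'$ with $a(E) \leq -1$, where $K_{X'} \equiv \mu^*(K_X+\Delta) + \sum a(E')E'$. Since $W$ has codimension $1$ in $X$ and $E$ maps onto $W$, the morphism $\mu$ restricted to $E$ is generically finite onto $W$; in particular $E$ cannot be $\mu$-exceptional (an exceptional divisor maps to something of codimension $\geq 2$), so $E$ is the strict transform $\widetilde{\Delta}_{\overline i}$ of one of the components of $\supp(\Delta)$, say $\Delta_{\overline i}$, and $W = \mu(E) = \Delta_{\overline i}$.

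Next I would compute the discrepancy $a(\widetilde{\Delta}_{\overline i})$ explicitly. Writing $\mu^*\Delta = \sum_i d_i \widetilde{\Delta}_i + (\text{exceptional terms})$ and using $K_{X'} = \mu^* K_X + \sum (\text{exceptional terms})$ (no strict transforms of $\Delta_i$ appear in $K_{X'/X}$ since $\mu$ is an isomorphism at the generic point of each $\Delta_i$, the variety being normal hence smooth in codimension $1$ there), we get that the coefficient of $\widetilde{\Delta}_{\overline i}$ in $K_{X'} - \mu^*(K_X+\Delta)$ is exactly $-d_{\overline i}$. Hence $a(\widetilde{\Delta}_{\overline i}) = -d_{\overline i}$, and the condition $a(E) \leq -1$ becomes $d_{\overline i} \geq 1$. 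If moreover $W$ is pure, then by definition the pair is log canonical at the generic point of $W = \Delta_{\overline i}$, which forces $a(E') \geq -1$ for every divisor $E'$ whose centre contains the generic point of $W$; applied to $E' = \widetilde{\Delta}_{\overline i}$ this gives $-d_{\overline i} \geq -1$, i.e.\ $d_{\overline i} \leq 1$, and combined with $d_{\overline i} \geq 1$ we conclude $d_{\overline i} = 1$.

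The only genuinely delicate point is the claim that $E$ is non-exceptional, i.e.\ that a log canonical centre of codimension $1$ must literally be one of the prime components of $\supp(\Delta)$; this uses that $X$ is normal (so regular in codimension $1$) so that $\mu$ is an isomorphism over the generic points of all codimension-$1$ subvarieties not contained in $\supp(\Delta)$, and that over a codimension-$1$ subvariety the exceptional locus of $\mu$ has empty interior in $E$. Everything else is a bookkeeping computation of coefficients in the ramification formula. I do not expect to need the full log-resolution machinery beyond what is already recalled in the definitions above, so this should be short.
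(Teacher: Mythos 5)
Your proposal is correct and follows essentially the same route as the paper's proof: identify the divisor $E$ computing the lc centre, observe that codimension $1$ forces $E$ to be non-exceptional (hence the strict transform of some $\Delta_{\overline{i}}$, using normality of $X$), and read off $a(E)=-d_{\overline{i}}$ to get $d_{\overline{i}}\geq 1$, with purity giving the reverse inequality. The only difference is that you spell out the discrepancy computation that the paper delegates to references; no gap.
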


\begin{proof}
By definition of lc centre, there exists  $\mu: X' \rightarrow X$ a log resolution of $(X, \Delta)$ and a prime divisor $E \subset X'$ such that $\mu(E)=W$ and of discrepancy $a(E,X,\Delta) \leq -1$. Since the codimension of $W$ is $1$ then $E$ cannot be exceptional for $\mu$, hence (cf.\ \cite[9.3G, footnote 14]{LazII} or \cite[2.25-2.26]{Kollar}) $E$ is a strict transform of one of the $\Delta_i's$, i.e $\exists \overline{i}$ such that $W=\mu(E)=\Delta_{\overline{i}}$ and $a(E,X,\Delta)=-d_{\overline{i}} \Rightarrow d_{\overline{i}} \geq 1$.

If moreover $W$ is pure, i.e. it is lc at the generic point of $W$, then, since $\mu(E)=W$ actually contains the generic point of $W$, $-d_{\overline{i}}=a(E,X,\Delta)\geq -1 \Rightarrow d_{\overline{i}}=1$.
\end{proof}

\subsection{Some techniques}
In this section we list some of the techniques that will be involved later. Most of them are already well-known but since they are needed in more particular settings sometimes we include also proofs.

First of all we state the classical Tie Breaking theorem, but in its local version, using big divisors to perturb the log canonical centre, instead of ample ones. Check also \cite[Proposition 8.7.1]{KollarlibroCorti}  and \cite[Theorem 3.7]{Brustet}.

\begin{lemma} [local Tie Breaking with a big divisor](cf.\ \cite[Lemma 2.6]{Todorov}) \label{tiebreaking}
Let $X$ be a complex smooth projective variety and $\Delta$ an effective $\mathbb{Q}$-divisor and assume that $(X, \Delta)$ is lc but not klt at some point $x \in X$. Then:
\begin{enumerate}[a.]
\item \label{zero} If $W_1, W_2 \in LLC(X, \Delta, x)$ and $W$ is an irreducible component of $W_1 \cap W_2$ containing $x$, then $W \in LLC(X,\Delta, x)$.
\item By the item before, $LLC(X, \Delta, x)$ has a unique minimal irreducible element, say $V$.
\item \label{uno} If $L$ is a big divisor and $x \not \in \mathbb{B}_+(L)$ then there exist a positive rational number $a$ and an effective $\mathbb{Q}$-divisor $M$ such that $M \sim_{\mathbb{Q}} L$ and such that for all $ 0 < \epsilon \ll 1$, $(X, (1-\epsilon)\Delta+\epsilon aM)$ is lc at $x$ and $LLC(X,(1-\epsilon)\Delta+\epsilon aM, x)=\{V\}$. 
\item \label{due} If $\Delta$ is big, $x \not \in \mathbb{B}_+(\Delta)$ and $\Delta \sim \lambda D$ with $\lambda < c$, $\lambda \in \mathbb{Q}^+$ and $D$ a $\mathbb{Q}$-divisor, then there exists an effective $\mathbb{Q}$-divisor $\Delta'$ such that $(X, \Delta')$ is lc, not klt at $x$, $LLC(X, \Delta',x)=\{V\}$ and $\Delta' \sim \lambda'D$ with $\lambda'<c$, $\lambda' \in \mathbb{Q}^+$.
\item \label{tre} In every case, we can also assume that there is a unique place lying above $V$, locally at $x$.
\end{enumerate}
\end{lemma}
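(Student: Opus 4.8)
The plan is to prove the five assertions of Lemma~\ref{tiebreaking} in the order they are stated, since each one feeds into the next. First, for part~(\ref{zero}), I would use the standard fact that an irreducible component $W$ of an intersection $W_1 \cap W_2$ of two log canonical centres through $x$ is again a log canonical centre through $x$. The cleanest argument uses multiplier ideals: choose a common log resolution and exploit the subadditivity/restriction properties, or invoke the classical result of Kawamata on the structure of minimal lc centres; concretely, one perturbs $\Delta$ slightly to $\Delta + \delta H$ (with $H$ ample and $x \notin \mathbb{B}_+$ considerations not yet needed since this is the ample case) to arrange that $x$ lies on a unique minimal lc centre, and then compares. I would cite \cite[Proposition 10.4.10]{LazII} or the Angehrn--Siu machinery in \cite{AngehrnSiu} here rather than reprove it. Part~(b) is then immediate: the family $LLC(X,\Delta,x)$ is nonempty (it contains at least one centre because $(X,\Delta)$ is not klt at $x$) and finite, closed under taking irreducible components of intersections by~(\ref{zero}), hence has a unique minimal element $V$.

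The heart of the lemma is part~(\ref{uno}), the tie-breaking step with a big divisor $L$. Here I would follow the Todorov-style argument \cite[Lemma 2.6]{Todorov}. Since $x \notin \mathbb{B}_+(L)$ and $L$ is big, Lemma~\ref{fuorisupporto} (together with Remark~\ref{fuorisupporto2}) lets me write, for a suitable small ample $A$, $L \sim_{\mathbb{Q}} A + E$ with $E$ effective and $x \notin \supp(E)$. Now I am in a position to run the classical tie-breaking with the genuinely ample divisor $A$: I want to add a multiple of a general member of (a multiple of) $A$ passing through $x$ with the right multiplicity along $V$ to make $V$ the unique lc centre. Concretely, I choose $\mathbb{Q}$-divisors $M_0 \sim_{\mathbb{Q}} A$ with high multiplicity at the generic point of $V$ but not containing the other centres in $LLC(X,\Delta,x)$, then set $M := M_0 + E \sim_{\mathbb{Q}} L$, pick the rational coefficient $a > 0$ so that the pair $(X, (1-\epsilon)\Delta + \epsilon a M)$ becomes lc but not klt exactly at $V$ for all small $\epsilon$ — that is, the threshold is realized along $V$ and nowhere else through $x$. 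The point that $x \notin \supp(E)$ guarantees $E$ does not create unwanted lc centres through $x$, so the centres through $x$ of the perturbed pair are controlled by $\Delta$ and $M_0$ alone. This is the step I expect to be the main obstacle: one must simultaneously (i) keep the pair lc at $x$, (ii) kill all lc centres through $x$ strictly larger than $V$, and (iii) retain $V$ itself as an lc centre; arranging all three requires choosing the auxiliary divisor $M_0$ on a log resolution with prescribed vanishing and then a delicate coefficient bookkeeping, and getting the inequalities on $\epsilon$ and $a$ right is where the care lies.

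Part~(\ref{due}) is a variant of~(\ref{uno}) in which the perturbing divisor is $\Delta$ itself, or rather the linear system underlying it. Since $\Delta$ is big, $x \notin \mathbb{B}_+(\Delta)$, and $\Delta \sim \lambda D$ with $\lambda < c$, I would first split off an effective part not through $x$ via Lemma~\ref{fuorisupporto} as above, write $\Delta \sim_{\mathbb{Q}} \lambda D$ with room to spare below $c$, and then apply~(\ref{uno}) with $L := \Delta$ (a big divisor with $x \notin \mathbb{B}_+$). This produces $M \sim_{\mathbb{Q}} \Delta \sim \lambda D$ and an interpolated divisor $\Delta' := (1-\epsilon)\Delta + \epsilon a M$; since $M \sim_{\mathbb{Q}} \lambda D$, we get $\Delta' \sim_{\mathbb{Q}} \lambda' D$ with $\lambda' = (1-\epsilon)\lambda + \epsilon a \lambda = \lambda(1 - \epsilon + \epsilon a)$, and choosing $\epsilon$ small enough (depending on $a$) keeps $\lambda' < c$, while $\Delta'$ is lc, not klt at $x$, with $LLC(X,\Delta',x) = \{V\}$ by construction. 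One must be slightly careful that the $a$ coming from~(\ref{uno}) is fixed before choosing $\epsilon$, so that $\lambda(1-\epsilon+\epsilon a) \to \lambda < c$ as $\epsilon \to 0$; that is the only subtlety. Finally, part~(\ref{tre}): having arranged $V$ to be the unique minimal lc centre, one more perturbation (adding a small general ample or, using the big-divisor trick once more, a small effective divisor not through $x$ with a prescribed coefficient along the unique relevant exceptional divisor over $V$) separates multiple log canonical places over $V$, leaving a single place locally at $x$ — this is the standard second tie-breaking, and I would again invoke \cite[Proposition 10.4.10]{LazII} or \cite[Lemma 2.6]{Todorov} and note that the same argument applies verbatim in each of the cases above since in all of them $V$ is already the unique element of $LLC$ through $x$.
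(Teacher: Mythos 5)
The paper does not actually prove Lemma~\ref{tiebreaking}: it is quoted from \cite[Lemma 2.6]{Todorov}, with \cite[Proposition 8.7.1]{KollarlibroCorti} and \cite[Theorem 3.7]{Brustet} cited for the classical ample-divisor version. Your sketch correctly reproduces that standard tie-breaking argument, and in particular you isolate the one ingredient that is specific to the ``big'' version, namely the decomposition $L \sim_{\mathbb{Q}} A+E$ with $A$ ample, $E$ effective and $x \notin \supp(E)$ coming from Lemma~\ref{fuorisupporto}, which reduces everything to the ample case without creating new lc centres through $x$; the derivation of (d) from (c) with $\lambda'=\lambda(1-\epsilon+\epsilon a)$ and $a$ fixed before $\epsilon$ is also the intended one. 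So the proposal is consistent with the proof the paper is implicitly relying on.
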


The next lemma, due to Hacon--McKernan (cf. \cite[Lemma 2.6]{HcMcK}), essentially explains how to pull back sections from log canonical centres when we already know that these centres have dimension $0$. The main ingredient is Nadel's Vanishing theorem, that, under particular conditions, assures the surjectivity of the restriction map. When dealing with more that one point, this lemma will be applied together with Lemma \ref{tanti divisori}.

\begin{lemma}  \label{nonvanishing}
Let $X$ be a smooth projective variety and $D$ a big and integral divisor on $X$. Let $x,y \not \in\mathbb{B}_+(D)$. 
Assume that there exists an effective $\mathbb{Q}$-divisor $\Delta_x \sim_{\mathbb{Q}} \lambda_x D$ with $\lambda_x \in \mathbb{Q}^+$ and such that $LLC(X,\Delta_x,x)=\{\{x\}\}$. Then for every $m \in \mathbb{N}^+$ such that $m > [\lambda_x]$, $h^0(\Ox(K_X+mD))>0$.  If moreover there exists another effective $\mathbb{Q}$-divisor $\Delta_y \sim_{\mathbb{Q}} \lambda_y D$ with $\lambda_y \in \mathbb{Q}^+$, such that $LLC(X,\Delta_y,y)=\{\{y\}\}$ and such that $x \not \in \supp( \Delta_y)$ and $y \not \in \supp( \Delta_x)$, then for every $m \in \mathbb{N}^+$ such that $m > [\lambda_x+\lambda_y]$, $h^0(\Ox(K_X+mD))\geq2$.

More generally, let $x_1, \ldots, x_n \not \in \mathbb{B}_+(D)$. If for every $ 1 \leq i \leq n$ there exists an effective $\mathbb{Q}$-divisor $\Delta_i \sim_{\mathbb{Q}} \lambda_i D$ with $\lambda_i \in \mathbb{Q}^+$, such that $LLC(X, \Delta_i, x_i)=\{x_i\}$ and such that $x_i \not \in \cup_{j\not = i} \supp(\Delta_j)$ then for every $m \in \mathbb{N}^+$ such that $m > \left[ \sum_{i=1}^n \lambda_i \right]$, $h^0(\Ox(K_X+mD)) \geq n$.
\end{lemma}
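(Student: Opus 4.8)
The plan is to reduce everything to a single application of Nadel vanishing, as in the original Hacon--McKernan argument, after packaging the $n$ points together. First I would choose a small ample $\mathbb{Q}$-divisor $A$: since $D$ is big, Lemma \ref{fuorisupporto} (together with Remark \ref{fuorisupporto2}) lets me write $D \sim_{\mathbb{Q}} A + E$ with $E$ effective and $x_1, \ldots, x_n \notin \supp(E) \cup \mathbb{B}_+(D)$. Fix $m \in \mathbb{N}^+$ with $m > \big[\sum_i \lambda_i\big]$, and set $\delta := m - \sum_i \lambda_i > 0$ (so $\delta$ may exceed $0$ by up to $1$); I will absorb a multiple of $E$ using the slack $\delta$, shrinking $\|A\|$ as needed so that the coefficient of $A$ stays positive.

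Next I would build the global divisor. Put $\Delta := \sum_{i=1}^n \Delta_i$; then $\Delta \sim_{\mathbb{Q}} \big(\sum_i \lambda_i\big) D$ and, because each $LLC(X,\Delta_i,x_i) = \{x_i\}$ and $x_i \notin \cup_{j \ne i}\supp(\Delta_j)$, the pair $(X,\Delta)$ is log canonical at each $x_i$ with $\{x_i\}$ the unique minimal lc centre there, and moreover $Nklt(X,\Delta)$ is $0$-dimensional in a neighbourhood of each $x_i$ (the hypothesis that $x_i$ avoids the supports of the other $\Delta_j$ forces the local structure near $x_i$ to come only from $\Delta_i$). Now I perturb: choose a small effective $E' \sim_{\mathbb{Q}} cE$ for suitable rational $c$ so that, with the remaining slack, $\Delta + E'$ is still lc exactly at the $x_i$'s with the same isolated non-klt points, and $\Delta + E' \sim_{\mathbb{Q}} mD - A'$ for an ample $\mathbb{Q}$-divisor $A'$ (this is where $\delta>0$ is spent; the $x_i \notin \supp(E)$ condition guarantees $E'$ does not disturb the lc centres). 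Thus $\mathcal{J}(X, \Delta+E')$ is contained in the ideal sheaf of a scheme supported on $\{x_1,\ldots,x_n\}$ (possibly larger scheme-theoretically, but that only helps), and cosupported away from all other relevant loci.

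Then I apply Nadel vanishing to $K_X + mD = K_X + A' + (\Delta + E')$: since $A'$ is ample (big and nef) and $mD - (\Delta+E')$ is $\mathbb{Q}$-linearly equivalent to the ample $A'$, we get $H^1\big(X, \mathcal{O}_X(K_X+mD) \otimes \mathcal{J}(X,\Delta+E')\big) = 0$, hence the restriction map
\[
H^0\big(X, \mathcal{O}_X(K_X+mD)\big) \longrightarrow H^0\big(X, \mathcal{O}_X(K_X+mD) \otimes \mathcal{O}_X/\mathcal{J}(X,\Delta+E')\big)
\]
is surjective. The target surjects onto $\bigoplus_{i=1}^n \mathbb{C}_{x_i}$ because the non-klt locus contains the $n$ distinct reduced points $x_1, \ldots, x_n$ as isolated components; therefore $h^0(\mathcal{O}_X(K_X+mD)) \ge n$. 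The cases $n=1$ and $n=2$ are just specializations, with $\lambda_x, \lambda_y$ in place of the $\lambda_i$ and the explicit hypotheses $x \notin \supp(\Delta_y)$, $y \notin \supp(\Delta_x)$ being exactly the $n=2$ instance of the separation condition.

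The main obstacle I anticipate is bookkeeping the rational coefficients so that after (i) combining the $\Delta_i$, (ii) adding the perturbation $E'$ to make the difference ample, and (iii) keeping $\|A\|$ below the threshold of Lemma \ref{fuorisupporto}, the resulting pair is \emph{still} log canonical with precisely the isolated non-klt points $x_1, \ldots, x_n$ — i.e. verifying that the perturbations neither destroy the lc centres at the $x_i$ nor create new non-klt behaviour elsewhere that could merge components. The condition $x_i \notin \cup_{j\ne i}\supp(\Delta_j)$ is what makes the $n$ centres genuinely independent, and the condition $x_i \notin \supp(E)$ (arranged via Remark \ref{fuorisupporto2}) is what lets the ampleness-producing perturbation be added harmlessly; making these interact cleanly, with the strict inequality $m > \big[\sum_i \lambda_i\big]$ providing just enough room, is the delicate point.
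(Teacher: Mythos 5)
Your proposal is correct and follows essentially the same route as the paper: decompose $D \sim_{\mathbb{Q}} A+E$ via Lemma \ref{fuorisupporto} and Remark \ref{fuorisupporto2} with the points off $\supp(E)$, form the divisor $\sum_i \Delta_i + (m-\sum_i\lambda_i)E$ (your $\Delta+E'$, the paper's $B$), observe that its non-klt locus has the $x_i$ as isolated points, and apply Nadel's vanishing to the twisted multiplier-ideal sequence to get surjectivity of the restriction onto the $n$ isolated points. The bookkeeping you worry about is handled exactly as you anticipate, with $mD-B \sim_{\mathbb{Q}} (m-\sum_i\lambda_i)A$ ample because $m>\left[\sum_i\lambda_i\right]$ forces $m>\sum_i\lambda_i$.
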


\begin{proof}
Since $x \not \in \mathbb{B}_+(D)$, by Lemma \ref{fuorisupporto} there exist an ample $\mathbb{Q}$-divisor $A_x$ of sufficiently small norm and an effective $\mathbb{Q}$-divisor $E_x$ such that $D \sim_{\mathbb{Q}} A_x+E_x$ and $x \not \in \supp(E_x)$. Let us consider the multiplier ideal associated to $\Delta_x$, $\mathcal{J}(\Delta_x)$. Let us notice that, by the hypothesis that $\{x\}$ is an isolated lc-centre at $x$, there exists an open neighbourhood $U_x$ of $x$  such that $\mathcal{J}(\Delta_x)_x \subsetneq \mathcal{O}_{X,x}$ but $\mathcal{J}(\Delta_x)_z =\mathcal{O}_{X,z}$ for all $z \in U_x-\{x\}$.

Let $B_x$ be the $\mathbb{Q}$-divisor $\Delta_x + (m-\lambda_x)E_x$. Since $x \not \in \supp(E_x)$ we can say that $\mathcal{J}(B_x)_x \subsetneq \mathcal{O}_{X,x}$ and $\mathcal{J}(B_x)_z = \mathcal{O}_{X,z}$ for every $z \in U'_x:=U_x \cap (X-\supp(E_x))$, that is: the set of zeroes $Z(\mathcal{J}(B_x))$ has $x$ as an isolated point. 

Let us consider the following exact sequence:
$$ 0 \rightarrow \mathcal{J}(B_x) \rightarrow \mathcal{O}_X \rightarrow \mathcal{O}_{Z(\mathcal{J}(B_x))} \rightarrow 0$$
Tensoring it by $\Ox({K_X+mD})$ we obtain:
$$ 0 \rightarrow \mathcal{J}(B_x) \otimes \Ox(K_X+mD) \rightarrow \mathcal{O}_X(K_X+mD) \rightarrow \mathcal{O}_{Z(\mathcal{J}(B_x))} \otimes \Ox(K_X+mD) \rightarrow 0$$
$x$ is an isolated point in $Z(\mathcal{J}(B_x))$ so we have $h^0(\mathcal{O}_{Z(\mathcal{J}(B_x))}\otimes \Ox(K_X+mD)) > 0$. 

Let us notice that since $m$ is an integer greater than $[\lambda_x]$ then $m > \lambda_x$, hence  $mD-B_x \sim_{\mathbb{Q}} (m-\lambda_x) A_x$ is big and nef. Therefore we can apply Nadel's theorem (cf.\ \cite[Theorem 9.4.8]{LazII}) to conclude that $H^1(\mathcal{O}_X(K_X+mD) \otimes \mathcal{J}(B_x))=0$ and thus the first part of the lemma is proved.

Since $x,y \not \in \mathbb{B}_+(D)$ then, by Remark \ref{fuorisupporto2}, there exist an ample $\mathbb{Q}$-divisor $A$ of sufficiently small norm and an effective $\mathbb{Q}$-divisor $E$ such that $D \sim_{\mathbb{Q}} A+E$ and $x,y \not \in \supp(E)$.
Let $B$ be the $\mathbb{Q}$-divisor $\Delta_x+(m-\lambda_x-\lambda_y)E+\Delta_y$. Since $x,y \not \in \supp(E)$, $x \not \in \supp(\Delta_y)$, $y \not \in \supp(\Delta_x)$, we can conclude that $Z(\mathcal{J}(B))$ has $x,y$ as two isolated points.

Let us consider the following exact sequence:
$$ 0 \rightarrow \mathcal{J}(B) \otimes \Ox(K_X+mD) \rightarrow \mathcal{O}_X(K_X+mD) \rightarrow \mathcal{O}_{Z(\mathcal{J}(B))} \otimes \Ox(K_X+mD) \rightarrow 0$$
$x,y$ are isolated points in $Z(\mathcal{J}(B))$ so we have $h^0(\mathcal{O}_{Z(\mathcal{J}(B))}\otimes \Ox(K_X+mD)) \geq 2$ and since $mD-B \sim_{\mathbb{Q}} (m-\lambda_x-\lambda_y)A$ we have that $mD-B$ is big and nef (by hypothesis $\lambda_x+\lambda_y<m$). Therefore we can conclude as before, simply applying Nadel's theorem.  

The general case in analogous.
\end{proof}


As we have already said, we will use log canonical centres to pull back sections of multiples of the canonical divisor. Unluckily this is not easy to do, unless the lc centres are points. Unfortunately when the volume is low, cutting down the dimension of lc centres does not allow us to have information about small multiple of the canonical divisor. That is why Todorov in \cite{Todorov}, using ideas of McKernan (see \cite{McK}) has developed another strategy in the case of threefold, that is to produce a morphism from the threefold to a curve. The next proposition shows how to create sections in this way:

\begin{proposition} \label{propositionfibration}
Let $X$ be a smooth projective threefold of general type. Suppose that there exist a smooth projective curve $B$ and a dominant morphism with connected fibres $f: X \rightarrow B$ such that the general fibre $X_b$ is a minimal, smooth surface of general type. Moreover suppose there exist $\lambda \in \mathbb{Q}^+$ and, for a general $b \in B$, an effective $\mathbb{Q}$-divisor $D_b$ on $X$ such that $D_b \sim_{\mathbb{Q}} \lambda K_X$ and such that $X_b$ is a lc centre for $(X, D_b)$. Suppose also that, for general $b$, there exists $\beta \in \mathbb{Q}^+$ such that $\vol(X_b) \leq \beta^2$. Then, given $b_1, \ldots, b_k$ general points on $B$,  the restriction map gives a surjection
$$H^0(\mathcal{O}_X((n+1)K_X) \rightarrow H^0(\mathcal{O}_{X_{b_1}}((n+1)K_{X_{b_1}})) \oplus \ldots \oplus H^0(\mathcal{O}_{X_{b_k}}((n+1)K_{X_{b_k}}))$$
as long as $ \lambda k (4(n+1)[\beta^2]- 1) <  1$ and $n > \lambda k$.
\end{proposition}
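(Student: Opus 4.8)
The plan is to derive the surjectivity from the vanishing of an appropriate first cohomology group, which in turn will come from Nadel vanishing applied to a multiplier ideal whose non-klt locus is exactly the disjoint union $X_{b_1} \cup \ldots \cup X_{b_k}$. First I would fix $k$ general points $b_1, \ldots, b_k$ on $B$. For each $i$, the hypothesis gives an effective $\Q$-divisor $D_{b_i} \sim_\Q \lambda K_X$ with $X_{b_i}$ a lc centre of $(X, D_{b_i})$. Since the $b_i$ are general, these fibres are pairwise disjoint and (after a small perturbation/Tie Breaking in the style of Lemma \ref{tiebreaking}) I may assume $X_{b_i}$ is the \emph{minimal} lc centre through its generic point and that $X_{b_i} \not\subseteq \supp(D_{b_j})$ for $j \neq i$; being codimension-$1$ lc centres, Lemma \ref{centrocod1} tells us the coefficient of $X_{b_i}$ in $D_{b_i}$ is $\geq 1$, and we can arrange it to be exactly $1$ by scaling down. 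Summing, $\Delta := \sum_{i=1}^k D_{b_i} \sim_\Q \lambda k\, K_X$ and $Nklt(X, \Delta)$ contains each $X_{b_i}$ as an isolated (reduced, codimension-$1$) component near its generic point.

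The next step is to restrict the pair to a general fibre. Writing $S = X_{b_i}$, adjunction gives $K_X|_S \sim_\Q K_S$ (the fibre is general, so $X$ is smooth along $S$ and $S$ is smooth with trivial normal bundle up to numerical equivalence — indeed $K_X \cdot S \equiv K_S$), and the different $\Delta|_S$ inherits the klt-away-from-$S$ property so that $(S, \Theta_S)$ with $\Theta_S := (\Delta - S)|_S$ is klt for a general choice. Then I want to show that on $S$ the adjoint linear system $|K_S + \text{(something)}|$ with $(n+1)K_S$ of sections actually surjects — but the cleaner route is the following: consider the divisor $(n+1)K_X$ and the sheaf $\mathcal{J}(\Delta')$ where $\Delta'$ is a perturbation of $\Delta$ designed so that $\mathcal{J}(\Delta')$ cuts out precisely $\bigsqcup_i X_{b_i}$ with reduced structure and $(n+1)K_X - \Delta' - \sum_i X_{b_i}$ is big and nef. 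The numerics are exactly where the conditions $\lambda k < n$ and $\lambda k (4(n+1)[\beta^2] - 1) < 1$ enter: the first ensures $(n+1)K_X - \Delta'$ stays positive (big and nef, using Lemma \ref{fuorisupporto}-type arguments since the $b_i$ can be taken outside $\mathbb B_+(K_X)$, and $X$ being of general type means $K_X$ is big), while the factor $4(n+1)[\beta^2] - 1$ is the bound coming from Bombieri/Reider-type effective generation on the surface $X_{b_i}$: since $\vol(X_{b_i}) \leq \beta^2$, the minimal surface of general type $X_{b_i}$ has $4(n+1)K_{X_{b_i}}$ base-point free–ish behaviour controlled by $[\beta^2]$, so one can find an auxiliary effective $\Q$-divisor on $X_{b_i}$, pulled back / extended, whose coefficient budget is $\lambda k$ per the constraint.

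Concretely, the key steps in order are: (1) choose $b_1, \ldots, b_k$ general, outside $\mathbb B_+(K_X)$, with pairwise disjoint fibres and each $D_{b_i}$ as above; (2) use Tie Breaking (Lemma \ref{tiebreaking}) and Lemma \ref{centrocod1} to normalise each $D_{b_i}$ so that $X_{b_i}$ appears with coefficient $1$ and the pair is lc, not klt, exactly along $X_{b_i}$, with $X_{b_i} \not\subseteq \supp D_{b_j}$ for $i \neq j$; (3) on each general fibre $S = X_{b_i}$, use adjunction $K_X|_S \equiv K_S$ and the hypothesis $\vol(S) \leq \beta^2$ together with surface theory to build an effective $\Q$-divisor witnessing that the restriction of $(n+1)K_X$ to $S$ is globally generated / has sections, the cost being at most the slack allowed by $\lambda k(4(n+1)[\beta^2]-1) < 1$; (4) combine the per-fibre data into a single $\Q$-divisor $\Delta'' \sim_\Q \mu\, K_X$ with $\mu < n+1$, whose multiplier ideal is $\mathcal O_X(-\sum_i X_{b_i})$ near the fibres and $\mathcal O_X$ elsewhere; (5) apply Nadel vanishing: $(n+1)K_X - \Delta'' - \sum_i X_{b_i}$ big and nef (here $\lambda k < n$ is used) gives $H^1\bigl(X, \mathcal O_X((n+1)K_X) \otimes \mathcal{J}(\Delta'')\bigr) = 0$, hence the restriction $H^0(X, (n+1)K_X) \to \bigoplus_i H^0(X_{b_i}, \mathcal O_{X_{b_i}}((n+1)K_X)|_{X_{b_i}})$ is surjective; (6) identify $\mathcal O_{X_{b_i}}((n+1)K_X)|_{X_{b_i}} \cong \mathcal O_{X_{b_i}}((n+1)K_{X_{b_i}})$ via adjunction to conclude.

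The main obstacle I expect is step (3)–(4): producing, uniformly over the general fibres and with a controlled coefficient budget, an effective $\Q$-divisor on $X$ that simultaneously (a) makes the $k$ fibres isolated non-klt points of the ambient multiplier ideal with reduced structure along each $X_{b_i}$, (b) keeps the total $\Q$-linear equivalence class a multiple $\mu K_X$ with $\mu < n+1$, and (c) respects the disjointness/avoidance conditions among the $D_{b_j}$ — all while the only quantitative input on the fibres is $\vol(X_{b_i}) \leq \beta^2$, which forces us through an effective very-ampleness / base-point-freeness statement for pluricanonical systems on minimal surfaces of general type of bounded volume (this is what manufactures the factor $4(n+1)[\beta^2] - 1$). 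Getting the bookkeeping of coefficients exactly to match the stated inequalities $\lambda k(4(n+1)[\beta^2]-1) < 1$ and $n > \lambda k$ is the delicate part; the cohomological machinery (Nadel vanishing, adjunction) is then routine.
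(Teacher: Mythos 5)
Your overall architecture (ideal-sheaf exact sequence, Nadel vanishing, adjunction $K_X|_{X_{b_i}}\cong K_{X_{b_i}}$ since the normal bundle of a fibre is trivial) is the right one, but there is a genuine gap exactly where you flag "the main obstacle": you never construct the divisor that makes the numerics work, and the construction is the whole content of the proposition. Moreover your setup is structurally different from what actually works. You want the fibres $X_{b_1},\dots,X_{b_k}$ to sit \emph{inside} the non-klt locus of a single $\Delta''\sim_{\mathbb Q}\mu K_X$ with $\mathcal J(\Delta'')=\mathcal I_{\sqcup X_{b_i}}$ near the fibres. This requires (by inversion of adjunction) the restricted pair on each fibre to be klt, \emph{and} it requires $nK_X-\Delta''$ to be big and nef. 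Since $K_X$ is only big, the standard fix is to add a large multiple $(n-\lambda k)E$ of the fixed effective part of $K_X\sim_{\mathbb Q}A+E$; but that term has uncontrolled multiplicities along the (general but fixed) fibres and destroys the klt-ness of the restriction, so $\mathcal O_X/\mathcal J(\Delta'')$ need no longer be $\oplus_i\mathcal O_{X_{b_i}}$ and the Nadel surjection lands in the wrong sheaf. The paper avoids this entirely: the fibres are kept \emph{out} of the non-klt locus and are subtracted directly in the sequence $0\to\mathcal O_X(-\sum X_{b_i})\to\mathcal O_X\to\oplus\mathcal O_{X_{b_i}}\to 0$; the auxiliary divisor is $F=hD_{b'}+\frac{i}{m}G+jX_{b'}+\epsilon E$, built from the lc divisor $D_{b'}$ at a \emph{new} general point $b'$ (scaled by $h=\frac{k(n+1)-\epsilon k}{\lambda k+1}$ and compensated by $jX_{b'}=-hX_{b'}$, which is effective because $D_{b'}\geq X_{b'}$ by Lemma \ref{centrocod1}) together with a general $G\in|m(K_X+\sum X_{b_i})|$, whose restriction to each fibre is general in the base-point-free system $|mK_{X_{b_i}}|$ (Kawamata's extension theorem plus Bombieri) and hence contributes nothing to the multiplier ideal by Koll\'ar--Bertini. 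Only $\epsilon E$ appears, so no large multiple of $E$ ever touches the fibres; the condition $n>\lambda k$ is what makes $i\geq 0$, i.e.\ $F$ effective.

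You also misidentify where $4(n+1)[\beta^2]-1$ comes from: it is not an effective base-point-freeness bound for $4(n+1)K_{X_{b_i}}$. It arises in proving that $\mathcal J(F)|_{X_{b_i}}$ is \emph{trivial}: after Koll\'ar--Bertini one is reduced to $\mathcal J(h'\Delta)$ with $h'=\frac{k(n+1)}{\lambda k+1}$ and $\Delta\sim_{\mathbb Q}\lambda K_{X_{b_i}}$ nef, and through every point of $\Delta$ one picks a curve $C$ in a member of $|4K_{X_{b_i}}|$ (base-point-free by Bombieri) not containing $\Delta$, so that $\mult_x(h'\Delta)\leq h'\Delta\cdot C\leq 4h'\lambda K_{X_{b_i}}^2\leq 4h'\lambda[\beta^2]$; demanding this be $<1$ is precisely $\lambda k(4(n+1)[\beta^2]-1)<1$. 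This multiplicity-via-intersection argument, together with the $hD_{b'}-hX_{b'}+\frac{i}{m}G$ construction that replaces your problematic $(n-\lambda k)E$, is the missing content of your steps (3)--(4).
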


\begin{proof}
By Kawamata's theorem A (cf. \cite{Kawamata98}, taking $S=\{pt\}$) for every $1 \leq i \leq k$ and every positive integer $m$ the restriction maps $H^0(\mathcal{O}_X(m(K_X+X_{b_i}))) \rightarrow H^0(\mathcal{O}_{X_{b_i}}(mK_{X_{b_i}}))$ are surjective. Since for every $i$ we have an injection $H^0(\mathcal{O}_X(m(K_X+X_{b_i}))) \hookrightarrow H^0(\mathcal{O}_X(m(K_X+X_{b_1}+\ldots+{X_{b_k}})))$, then the restriction maps $H^0(\mathcal{O}_X(m(K_X+X_{b_1}+\ldots+X_{b_k}))) \rightarrow H^0(\mathcal{O}_{X_{b_i}}(mK_{X_{b_i}}))$ are surjective. Since $X_{b_i}$ are minimal surfaces of general type then, by \cite{Bombieri} for $m$ large enough (namely $m \geq 4$) $|mK_{X_{b_i}}| $ is base point free, hence  a general $G \in |m(K_X+X_{b_1}+\ldots+X_{b_k})|$ is such that for every $i$, $G|_{X_{b_i}}$ is a general divisor in the base-point-free linear system $|mK_{X_{b_i}}|$.

Since $K_X$ is big then $K_X=A+E$ where $A$ is an ample $\mathbb{Q}$-divisor and $E$ an effective $\mathbb{Q}$-divisor.

Let now $b'$ be a general point on $B$, $m$ be a sufficiently large integer, $G$ a general divisor in $ |m(K_X+X_{b_1}+\ldots+X_{b_k})|$, $\epsilon$ a rational number, $0<\epsilon \ll 1$. 
Let $h:=:h_{n,k}:=\frac{k(n+1)-\epsilon k}{\lambda k +1},$ $j:=:j_{n,k}:=-h_{n,k},$ $i:=:i_{n,k}:=-1+\frac{h_{n,k}}{k},$ and consider the $\mathbb{Q}$-divisor $F:=:F_{n,k}:= hD_{b'}+ \frac{i}{m}G+jX_{b'}+\epsilon E$. For $\epsilon$ sufficiently small $h > 0$. Since $X_{b'}$ is an exceptional log canonical centre of $D_{b'}$ then $D_{b'}=X_{b'}+$ other surfaces. Therefore if $i \geq 0$ then $F$ is an effective divisor: in order to have $i \geq 0$ it is enough to ask that $n > \lambda k$.

Moreover, by the choices of $h,i,j$, we have that $nK_X-(X_{b_1}+\ldots+X_{b_k})-F \equiv \epsilon A$.

Start with the following short exact sequence:
$$ 0 \rightarrow \mathcal{O}_X(-(X_{b_1}+\ldots +X_{b_k})) \rightarrow \mathcal{O}_X \rightarrow\mathcal{O}_{X_{b_1}} \oplus \ldots \oplus \mathcal{O}_{X_{b_k}} \rightarrow 0$$

After tensoring it by $\mathcal{O}_X((n+1)K_X)$ we have the following exact sequence (see \cite[Remark 1.39]{DiBiagio}): $$0 \rightarrow \mathcal{O}_X((n+1)K_X-(X_{b_1}+\ldots+X_{b_k})) \otimes \mathcal{J}(F) \rightarrow \mathcal{O}_X((n+1)K_X) \otimes \mathcal{J}(F) \rightarrow$$  $$\rightarrow \mathcal{O}_{X_{b_1}}((n+1)K_{X_{b_1}}) \otimes \mathcal{J}(F)_{X_{b_1}} \oplus \ldots \oplus\mathcal{O}_{X_{b_k}}((n+1)K_{X_{b_k}}) \otimes \mathcal{J}(F)_{X_{b_k}} \rightarrow 0$$

By Nadel's vanishing theorem (see \cite[9.4.8]{LazII}), $H^1( \mathcal{O}_X((n+1)K_X-(X_{b_1}+\ldots+X_{b_k})) \otimes \mathcal{J}(F) )=0$. Moreover, since $F$ is effective, $\mathcal{J}(F) \subseteq \mathcal{O}_X$, hence $\mathcal{O}_{X}((n+1)K_X)\otimes \mathcal{J}(F) \subseteq \mathcal{O}_X((n+1)K_X)$. Therefore to prove the theorem it is now sufficient only to prove that, under the hypotheses, $\mathcal{J}(F)_{X_{b_i}}$ is trivial for every $i$.

To ease the notation, let $b=b_i$. Since $X_b \nsubseteq \supp(F)$, we have that $\mathcal{J}(F)_{X_{b}} \supseteq \mathcal{J}(F|_{X_b})$, therefore we have to prove only that $\mathcal{J}(F|_{X_b})$ is trivial. Set $\Delta:=D_{b'}|_{X_b}$ and $\Gamma:=E|_{X_b}$. $\Delta$ and $\Gamma$ are effective divisors, with $\Delta \sim_{\mathbb{Q}} \lambda K_{X_b}$. $F|_{X_b}=h\Delta+ \frac{i}{m} G|_{X_b} + \epsilon \Gamma$.  Since $m$ is large enough and $G|_{X_b}$ is a general divisor in the base-point-free linear system $|mK_{X_{b}}|$, then, by Kollar--Bertini (see \cite[9.2.29]{LazII}), $\mathcal{J}(h\Delta+ \frac{i}{m} G|_{X_b} + \epsilon \Gamma)=\mathcal{J}(h\Delta+  \epsilon \Gamma)$. But, by \cite[Proposition 9.2.32.i]{LazII}, $\mathcal{J}(h\Delta+  \epsilon \Gamma) \supseteq \mathcal{J}(h\Delta+ \frac{\epsilon k}{\lambda k +1 } \Delta+ \epsilon \Gamma)=\mathcal{J}(\frac{k(n+1)}{\lambda k +1} \Delta + \epsilon \Gamma)=\mathcal{J}(\frac{k(n+1)}{\lambda k +1} \Delta )$, where the last equality is due to \cite[Example 9.2.30]{LazII}. Set $h':=:h'_{n,k}=\frac{k(n+1)}{\lambda k +1}$. Now for every $x \in \Delta$, pick a curve $C\subset X_b$ passing through $x$ that is a component of a divisor in $|4K_{X_b}|$ but it is not a component of $\Delta$ (cf.\ \cite[proof of claim 1]{Todorov}). 
Then $\mult_x(h'\Delta)= h' \mult_x(\Delta) \leq h'\Delta.C$. Since $\Delta \sim_{\mathbb{Q}} \lambda K_{X_b}$ is nef ($X_b$ is minimal and of general type) then $h' \Delta.C \leq 4 h' \Delta.K_{X_b} =  4 h' \lambda K_{X_b}^2$.  If $\mult_x(h' \Delta) <1$ for all $x \in \Delta$ then $\mathcal{J}(h' \Delta)$ is trivial, as wanted (cf.\ \cite[Proposition 9.5.13]{LazII}). Therefore we need only to impose $4 \lambda h' K_{X_b}^2 < 1$. By hypothesis, and since $\vol(X_b)$ is an integer, it is enough to ask that $\lambda k (4(n+1) [\beta^2] -1) < 1$.
\end{proof}

\section{Plurigenera for 3-folds of general type}

\begin{theorem} \label{threefold}
Let $X$ be a smooth projective threefold of general type such that $\vol(X) > \alpha^3$.  If  $\alpha \geq 879$ then $h^0(2K_X) \geq 1$ and if  $\alpha \geq 432(n+1)-3$ then $h^0((n+1)K_X)\geq n$, for all $n \geq 2$. More generally, if $X$ is not $g$-countably dense and if $g,n,\alpha$ are as in Table \ref{tavolanonvanishing} or, in the other cases, $\alpha \geq 48(n+1)-3$, then $h^0((n+1)K_X)\geq n$, for all $n \geq 1$. Moreover, under the same bounds on $\alpha$ and $g$ given by the case $n=1$, we have that $h^0(lK_X) \geq 1$ for all $l \geq 2$.
\end{theorem}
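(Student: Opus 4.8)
The plan is to prove Theorem \ref{threefold} by splitting into two cases according to whether $X$ is $3$-countably dense (and more generally $g$-countably dense for various $g$) or not, and then using the log-canonical-centre machinery of Section \ref{preliminaries}. First I would normalize the setup: let $K_X$ be big with $\vol(K_X)>\alpha^3$. Fix a very general point $x$ (and, for the higher plurigenera statement, finitely many points $x_1,\dots,x_n$ lying in a countably dense subset, so that Lemma \ref{tanti divisori} applies and we may arrange the divisors $\Delta_{x_i}$ to avoid each other's supports). The goal is to produce, for each such point, an effective $\Q$-divisor $\Delta_x\sim_\Q \lambda K_X$ with $LLC(X,\Delta_x,x)=\{\{x\}\}$ and $\lambda$ as small as possible; then Lemma \ref{nonvanishing} immediately gives $h^0(K_X+mK_X)=h^0((m+1)K_X)\geq n$ once $m>[\sum\lambda_i]$. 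Note $x\notin\mathbb{B}_+(K_X)$ for $x$ very general since $\mathbb{B}_+(K_X)$ is a proper closed subset, so the hypotheses of Lemmas \ref{fuorisupporto}, \ref{tiebreaking}, \ref{nonvanishing} are met.

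The construction of $\Delta_x$ follows the Angehrn--Siu / Hacon--McKernan cutting-down procedure: start from a divisor $\sim_\Q \mu K_X$ with high multiplicity at $x$ (forced by $\vol(K_X)>\alpha^3$ being large, via an asymptotic Riemann--Roch / volume count), use it to create a log canonical centre through $x$, and then inductively cut its dimension down by $1$ at the cost of a controlled increase in the coefficient $\lambda$, applying Tie Breaking (Lemma \ref{tiebreaking}) with the big divisor $K_X$ to keep the centre minimal and to make each intermediate centre behave well. When the centre has positive dimension, the key is a volume estimate on the centre itself: if the minimal centre $V$ has dimension $1$ or $2$, then $\vol(K_X|_V)$ (or rather the relevant restricted volume) is bounded below in terms of $\vol(K_X)$, and the worst obstruction is a surface centre or — crucially for the $g$-dependence — a curve centre of small genus. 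This is exactly why the hypothesis ``$X$ is not $g$-countably dense'' enters: if every curve through $x$ has genus $\geq g$, then a curve centre carries many sections of $K_X$ restricted to it (degree $\geq 2g-2$ on the normalization), which lets us cut down to a point with a much smaller $\lambda$, giving the improved bounds in Table \ref{tavolanonvanishing}. Without that hypothesis we only know $X$ is not $2$-countably dense (Remark \ref{generaltype}), so $g=2$ and we get the weaker bound $\alpha\geq 48(n+1)-3$, or worse if a surface centre of tiny volume survives.

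The main obstacle — and the reason the two headline constants $879$ and $432(n+1)-3$ differ — is the surface (codimension-one-plus) case: when the cutting-down procedure gets stuck at a surface centre $S$, one cannot reduce to a point cheaply, and instead one must pass to the fibration scenario. That is, either $S$ moves in a family sweeping out $X$, forcing (after Stein factorization of the associated map) a fibre space $f:X\to B$ over a curve with general fibre a minimal surface of general type, or $S$ is rigid and one handles it directly. In the fibration case I would invoke Proposition \ref{propositionfibration}: the general fibre $X_b$ is a minimal surface of general type, and because $\vol(X)$ is large while $\vol(X_b)$ can in principle be small, one needs the restriction map $H^0((n+1)K_X)\to\bigoplus H^0((n+1)K_{X_{b_i}})$ to be surjective, which Proposition \ref{propositionfibration} guarantees under the numerical conditions $\lambda k(4(n+1)[\beta^2]-1)<1$ and $n>\lambda k$; then Bombieri's surface results ($P_n(X_b)\neq 0$ for $n\geq 2$, $|nK_{X_b}|$ handled for $n\geq 4$) produce the sections on the fibres. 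Balancing the constant $\lambda$ coming from the fibration construction (roughly, how cheaply one can make $X_b$ a log canonical centre, which depends on $\vol(X)^{1/3}$ versus $\vol(X_b)^{1/2}$) against the point-centre constant is what fixes $\alpha$; the $n=1$ / $l\geq 2$ addendum then follows formally, since once $h^0(2K_X)\geq 1$ one has an effective divisor in $|2K_X|$ and adding copies of it (or rather, using that $(l-1)K_X$ is big and effective-ish) propagates non-vanishing to all $l\geq 2$ by the same restriction argument with the bounds from the $n=1$ case. I would expect the bulk of the genuine work to be the careful bookkeeping in the cutting-down step — tracking how the coefficient grows as the centre's dimension drops from $3$ to $2$ to $1$ to $0$ — and the optimization producing Table \ref{tavolanonvanishing}.
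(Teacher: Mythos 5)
Your overall strategy is the same as the paper's (which follows Todorov): produce a divisor $\Delta'_x\sim\lambda'_xK_X$ with $\lambda'_x<3/\alpha$ and multiplicity $>3$ at a very general $x$, tie-break to get an exceptional minimal lc centre $V_x$, cut its dimension down via Hacon--McKernan, feed the resulting point-centres into Lemma \ref{tanti divisori} and Lemma \ref{nonvanishing}, and handle the stubborn surface case via McKernan's fibration result and Proposition \ref{propositionfibration}. Two points in your sketch would not survive being made precise, however. First, your claim that when the minimal centre $V$ has positive dimension its (restricted) volume ``is bounded below in terms of $\vol(K_X)$'' is false, and if it were true the fibration case would be unnecessary for large $\alpha$. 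The paper has no such bound: it introduces a free parameter $\beta$, splits the surface-centre locus into $Y_{2,a}$ ($\vol(V_x)>\beta^2$, where cutting down is cheap) and $Y_{2,b}$ ($\vol(V_x)\leq\beta^2$, where one must pass to the fibration), uses countable density to ensure at least one stratum is countably dense, and only at the end optimizes $\beta$ against the competing conditions; this dichotomy, not a volume lower bound, is what produces the constants in Table \ref{tavolanonvanishing}. Relatedly, the dichotomy in the fibration step is not ``$S$ moves or $S$ is rigid'' but whether the generically finite map $\pi$ from McKernan's family is birational; when it is not, a general point lies on two distinct centres and one intersects them (at the price of doubling $\lambda$) to drop dimension, which is a separate numerical condition you omit.

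Second, your argument for the addendum ``$h^0(lK_X)\geq 1$ for all $l\geq 2$'' is wrong as stated: taking powers of a section of $2K_X$ (or tensoring with effective multiples of it) only produces sections of $2mK_X$, i.e.\ even multiples, and $(l-1)K_X$ being big does not by itself make it effective. The paper instead reruns the whole argument with $m=2$ in Lemma \ref{nonvanishing} (so the numerical condition becomes $2>[\sum\lambda_i]$, which is weaker) to get $h^0(3K_X)>0$ under the same hypotheses as the $n=1$ case; combining $h^0(2K_X)>0$ and $h^0(3K_X)>0$ then gives all $l\geq 2$ since every such $l$ is a nonnegative combination $2a+3b$. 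Without the separate treatment of $3K_X$ the odd multiples are not covered.
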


\begin{table}[!ht] \caption{} \label{tavolanonvanishing}
\begin{center}
\begin{tabular}{c|c|c||c|c|c}
$g$ &$n$ & $\alpha$ & $g$ &$n$ & $\alpha$ \\ \cline{1-6}
$2$ & $1$ & $\geq 879$ & $ 10$ & $1, \ldots, 304 $ & $\geq 60(n+1)-3 $ \\  \cline{2-3} \cline{5-6}
 & $\geq 2$ & $\geq 432(n+1)-3$ & $ $ & $305, \ldots, 381 $ & $\geq 18354$ \\ \cline{1-3}  \cline{4-6}
$3$ & $\geq 1 $ & $\geq 132(n+1)-3 $ & $ 11$ & $1, \ldots, 8$ & $\geq 60(n+1)-3$ \\ \cline{1-3} \cline{5-6}
$ 4$ & $1, \ldots, 6 $ & $\geq 96(n+1)-3 $ & $ $ & $9,10 $ & $\geq 550$ \\ \cline{4-6} \cline{2-3}
$ $ & $7$ & $\geq 714 $&$ 12$ & $1, \ldots, 4$ & $\geq 60(n+1)-3$ \\ \cline{5-6}  \cline{2-3}
$ $ & $ \geq 8$ & $\geq84(n+1)-3 $ & $ $ & $5 $ & $\geq 306$ \\ \cline{4-6} \cline{1-3}
$5$ & $1 $ & $\geq 165 $ & $ 13$ & $1,2$ & $\geq 60(n+1)-3$ \\ \cline{5-6} \cline{2-3}
$ $ & $2 $ & $\geq 242$ & $ $ & $3 $ & $\geq 223$ \\ \cline{4-6} \cline{2-3}
$ $ & $\geq 3 $ & $\geq 72(n+1)-3$& $ 14$ & $1,2$ & $\geq 60(n+1)-3$ \\ \cline{4-6} \cline{1-3}
$ 6$ & $1, \ldots, 43 $ & $\geq 72(n+1)-3 $&$ 15$ & $1$ & $\geq 117$ \\ \cline{5-6} \cline{2-3}
$ $ & $44, \ldots, 52 $ & $\geq 3234$ &$ $ & $2$ & $\geq 156$ \\ \cline{4-6} \cline{2-3}
$ $ & $\geq 53$ & $\geq 60(n+1)-3$ &$16,17,18 $ & $1$ & $\geq 117$ \\ \cline{4-6} \cline{1-3}
$7 $ & $1$ & $\geq 141$ &$19 $ & $1$ & $\geq 111$ \\ \cline{4-6} \cline{2-3}
$ $ & $2 $ & $\geq 184$ & $20 $ & $1$ & $\geq 105$ \\ \cline{4-6} \cline{2-3}
$ $ & $\geq 3$ & $\geq 60(n+1)-3$&$21 $ & $1$ & $\geq 101$ \\ \cline{4-6} \cline{1-3}
$ 8,9$ & $\geq  1$ & $\geq 60(n+1)-3$&$22 $ & $1$ & $\geq 97$ \\ \cline{4-6} \cline{1-3}
\end{tabular}
\end{center}
\end{table}


\begin{remark}
This improves \cite[Theorem 1.1]{Todorov}.
\end{remark}

\begin{proof}
We will follow \cite{Todorov} very closely. Since we need to obtain explicit numbers from an asymptotic measure (the volume) the idea is to use the hypothesis about the volume to produce singular divisors and, in this way, log canonical centres. Then we would like to pull back sections from the log canonical centres, using Nadel's vanishing theorem. Unfortunately we do not have information about sections of  systems of divisors on lc centres, unless lc centres are points: thus we need a technique by Hacon--McKernan to cut down the dimension of the lc centres. But when lc centres have codimension $1$ and small volume this cutting-down process does not lead to have a bicanonical section: therefore Todorov's idea is to apply, in this case, a theorem of McKernan about family of tigers and so produce a fibration of $X$ onto a curve and then, from this fibration, produce bicanonical sections (cf.\ Proposition \ref{propositionfibration}).

By Remark \ref{generaltype} $X$ is at least not $2$-countably dense, hence $g\geq 2$. Furthermore, since $X$ is not $g$-countably dense then by Remark \ref{notcountablydense} there exists a very general subset $\Lambda$ such that every curve passing through any point of $\Lambda$ has geometric genus $ \geq g$. Let $X_0$ be the intersection between $\Lambda$ and the complement of the union of all subvarieties of $X$ not of general type and $\mathbb{B}_+(K_X)$. $X_0$ is a very general subset of $X$, hence countably dense. 

Since $\vol(K_X) > \alpha^3$, by \cite[2.2]{Todorov}, for every $x \in X$ and every $k \gg 0$ there exists a divisor $A_x \in |kK_X|$ with $\mult_x(A_x) > k \alpha$. Let $\Delta'_x:= A_x \frac{\lambda'_x}{k}$, with $\lambda'_x < \frac{3}{\alpha}$, $\lambda'_x \in \mathbb{Q}^+$, but close enough to $\frac{3}{\alpha}$ so that $\mult_x(\Delta'_x) > 3$. Note that $\Delta'_x \sim \lambda_x'K_X$. Let $s_x:=lct(X, \Delta'_x, x)$. By \cite[9.3.2 and 9.3.12]{LazII}, $s_x < 1$ . Moreover, by \cite[9.3.16]{LazII}, $s_x \in \mathbb{Q}^+$. Therefore, without loss of generality, we can suppose that $(X, \Delta'_x)$ is lc, not klt in $x$.

By Lemma \ref{tiebreaking}, \ref{due}., for every $x \in X_0$ there exists an effective $\mathbb{Q}$-divisor $D_x \sim \lambda_x K_X$, with $\lambda_x < \frac{3}{\alpha}, \lambda_x \in \mathbb{Q}^+$, such that $(X, D_x)$ is lc, not klt in $x$ and $LLC(X, D_x, x)=\{V_x\}$, where $V_x$ is the unique minimal irreducible element of $LLC(X, \Delta'_x, x)$. Moreover we can also assume that $V_x$ is an exceptional lc centre.

Fix $\beta \in \mathbb{Q}^+$. Set $$Y_0:=\{x \in X_0 \textrm{ s.t. } \dim(V_x)=0\},$$ $$Y_1:=\{x \in X_0 \textrm{ s.t. } \dim(V_x)=1\},$$ $$Y_{2,a}:=\{x \in X_0 \textrm{ s.t. } \dim(V_x)=2 \textrm{ and } \vol(K_{V_x})> \beta^2\},$$ $$Y_{2,b}:=\{x \in X_0 \textrm{ s.t. } \dim(V_x)=2 \textrm{ and } \vol(K_{V_x}) \leq \beta^2\}.$$

Since $X_0$ is countably dense then at least one between $Y_0$, $Y_1$, $Y_{2,a}$ and $Y_{2,b}$ is countably dense. We will therefore analyze these cases.


First of all, let us assume that $Y_0$ is countably dense.
For every $x \in Y_0$ we have that $V_x=\{x\}$, in fact $\dim(V_x)=0$ and $V_x$ is irreducible. $x \in \supp(D_x)$. Therefore we can apply Lemma \ref{tanti divisori} and Lemma \ref{nonvanishing} (with $m=n$) to conclude that for every $n \geq 1$, as soon as
\begin{equation} \label{a-1}
\lambda_x < \frac{3}{\alpha} \leq 1 \Leftrightarrow \alpha \geq  3,
\end{equation}
$h^0(K_X + nK_X)=h^0((n+1)K_X) \geq n$.

Let us now consider the case $Y_1$ countably dense. We wish to apply \cite[Theorem 4.1]{HcMcK} to cut down the dimension of the lc centers. For every $x \in Y_1$ consider $V_x$ and a resolution $f_x: W_x \rightarrow V_x$. As we have already seen, $V_x$ is an exceptional lc centre of $(X, D_x)$. Since $x \in X_0$, $V_x$, and hence $W_x$, are of general type and $V_x$ is not contained in the augmented base locus of $K_X$. Moreover $g(W_x) \geq g \geq 2$. Let $U_x $ be the very general subset of $V_x$ defined as in \cite[Theorem 4.1]{HcMcK}.  Set $U'_x:=U_x \cap X_0$. $U'_x$ is still a very general and non-empty subset of $V_x$. We also have that $\vol({W_x}) \geq 2g-2$. Let $\epsilon \ll 1$, $\epsilon \in \mathbb{Q}^+$. Then $\vol(W_x) > 2g-2 - \epsilon$.  Set $t_1:= 1/(2g-2-\epsilon)$: $\vol(t_1 K_{W_x}) > 1$. 
For every $y \in U'_x$ let us consider $y' \in f_x^{-1}(y) \subset W_x$. Since $y'$ is a smooth point, by \cite[2.2]{Todorov} and \cite[9.3.2]{LazII}, there exists $\Theta_{y'} \sim t_1 K_{W_x}$ such that $(W_x, \Theta_{y'})$ is not klt in $y'$. As before, since $lct(W_x, \Theta_{y'}, y') < 1$, we can suppose that $(W_x, \Theta_{y'})$ is lc, not klt in $y'$ and $\Theta_{y'} \sim \mu_{y'} K_{W_x}$ with $\mu_{y'} \in \mathbb{Q}^+$ and $\mu_{y'} \leq 1/(2g-2-\epsilon)$. Since $W_x$ is a curve and lc centres are irreducible, $LLC(W_x, \Theta_{y'},y')=\{y'\}$. We can now apply \cite[Theorem 4.1]{HcMcK}, since $f_x(y')= y \in U'_x$ and $\{y'\}$ is a pure lc centre: for every $\delta \in \mathbb{Q}^+$, there exists a divisor $D'_{y}$ such that $\{y\}$ is an exceptional lc centre for $(X, D'_{y})$ and $D'_{y} \sim ((\lambda_x +1)(\mu_{y'}+1)-1+\delta)K_X$. Let us notice that since $\{y\}$ is an exceptional lc centre then $LLC(X, D'_y, y)=\{y\}$. 

At the end we have the following situation: for every point $z \in \cup_{x \in Y_1} U'_x$ there exists a $\mathbb{Q}$-divisor $D'_z$ such that $LLC(X, D'_z, z)=\{z\}$ and such that $D'_z \sim ((\lambda_z+1)(\mu_z+1)-1 + \delta)K_X$, with $\lambda_z < \frac{3}{\alpha}$ and $\mu_z \leq 1/(2g-2-\epsilon)$. Let us prove that  $\cup_{x \in Y_1} U'_x$ is still a countably dense subset of $X$: if $\cup_{x \in Y_1} U'_x \subseteq \cup_{i \in \mathbb{N}} Z_i$, where $Z_i$ are closed proper subset of $X$, then, for every $x \in Y_1$, $U'_x \subseteq \cup_{i \in \mathbb{N}} Z_i$, hence $U'_x \subseteq (\cup _{i \in \mathbb{N}} Z_i) \cap V_x = \cup_{i \in \mathbb{N}} (Z_i \cap V_x)$. But $U'_x$ is very general in $V_x$, hence countably dense. Therefore for every $x \in Y_1$ there exists $i \in \mathbb{N}$ such that $Z_i \supseteq V_x \ni x$, i.e. $Y_1 \subseteq \cup_{i \in \mathbb{N}} Z_i$, but this is a contradiction. 

We can now apply Lemma \ref{tanti divisori} and Lemma \ref{nonvanishing} (with $m=n$) to conclude that for every $n \geq 1$ if 
\begin{equation} \label{a0}
\left(\frac{3}{\alpha}+1\right)(1+1/(2g-2-\epsilon))-1+ \delta \leq 1 \Leftrightarrow \alpha \geq \frac{6g-3-3\epsilon}{(2g-2-\epsilon)(1 - \delta)-1}
\end{equation}
(we are considering $\epsilon, \delta$ very small) then $h^0((n+1)K_X) \geq n$.

Let us now suppose that $Y_{2,a}$ is countably dense. Again, we want to apply \cite[Theorem 4.1]{HcMcK}.  As before, for every $x \in Y_{2,a}$  we have $V_x$, a resolution $f_x: W_x \rightarrow V_x$ and  $U_x $ the very general subset of $V_x$ defined as in \cite[Theorem 4.1]{HcMcK}. As before, consider $U'_x := U_x \cap X_0$. $U'_x$ is still a very general and non-empty subset of $V_x$. For every $y \in U'_x$ consider $y' \in f^{-1}_x(y)$. Since $\vol(V_x) > \beta^2$ then $\vol(W_x) = \vol(V_x) > \beta^2$. Set $t_1=2/\beta$. Then $\vol(t_1K_{W_x}) > 2^2$. Hence there exists $\Theta_{y'} \sim t_1K_{W_x}$ such that $(W_x, \Theta_{y'})$ is not klt in $y'$. Since $lct(W_x, \Theta_{y'}, y')<1$, we can suppose that $(W_x, \Theta_{y'})$ is lc, not klt in $y'$ and $\Theta_{y'} \sim \mu_{y'} K_{W_x}$ with $\mu_{y'} \in \mathbb{Q}^+$ and $\mu_{y'} < 2/\beta$. Therefore there exists a pure lc centre $W'_{y'} \in LLC(W_x, \Theta_{y'}, y')$. Set $V'_y:= f_x(W'_{y'}) \ni y $. By \cite[Theorem 4.1]{HcMcK}, for every $\delta \in \mathbb{Q}^+$ there exists a $\mathbb{Q}$-divisor $D'_y$ such that $V'_y$ is an exceptional lc centre and such that $D'_y \sim ((\lambda_x+1)(\mu_{y'}+1)-1+\delta)K_X$. Recall that $\lambda_x < \frac{3}{\alpha}$ and $\mu_{y'} < \frac{2}{\beta}$. Consider $J_0:=\left\{y \in \cup_{x \in Y_{2,a}}U'_x \textit{ s.t. } \dim(V'_y)=0\right\}$ and $J_1:=\left\{y \in \cup_{x \in Y_{2,a}}U'_x \textit{ s.t. } \dim(V'_y)=1\right\}$. Note that if $V'_y$ is a point, i.e. $V'_y=\{y\}$, then $LLC(X,D'_y,y)=\{y\}$, while if $V'_y$ is a curve then it is of general type because it passes through $y \in X_0$. Since $\cup_{x \in Y_{2,a}}U'_x$ is countably dense in $X$, then either $J_0$ or $J_1$ is countably dense. 

If $J_0$ is countably dense then we can apply Lemma \ref{tanti divisori} and Lemma \ref{nonvanishing} (with $m=n$) to conclude that, assuming $\epsilon, \delta$ very small and 
\begin{equation} \label{b1}
\beta > \frac{2}{1-\delta},
\end{equation}
for every $n \geq 1$ if 
\begin{equation}\label{a1}
\left(\frac{3}{\alpha}+1\right)(1+2/\beta)-1+ \delta \leq 1 \Leftrightarrow \alpha \geq \frac{3\beta+6}{\beta(1 - \delta)-2}
\end{equation} 
then $h^0((n+1)K_X) \geq n$.
If $J_1$ is countably dense then we can argue exactly in the same way as we did before for $Y_1$ countably dense: simply re-read the proof substituting $Y_1$ with $J_1$ and $\lambda_x$ with $(\lambda_x+1)(\mu_{y'}+1)-1+\delta$. We can conclude that, assuming  $\epsilon, \delta$ very small and 
\begin{equation} \label{b2}
\beta > \frac{2}{ \left(2-\delta \right) \left( \frac{2g-2-\epsilon}{2g-1-\epsilon}\right) -1-\delta }, 
\end{equation}
for every $n \geq 1$ if

\begin{align}
\left( \left( \left( \frac{3}{\alpha}+1 \right) \left(\frac{2}{\beta}+1 \right)-1 + \delta \right)+1 \right) \left(1+\frac{1}{2g-2-\epsilon} \right) -1 + \delta\leq1 \Leftrightarrow \\ 
\Leftrightarrow  \alpha \geq \frac{3\beta+6}{\beta \left( \left(2-\delta \right) \left( \frac{2g-2-\epsilon}{2g-1-\epsilon}\right) -1-\delta \right)-2} \label{a2}&
\end{align} 
then $h^0((n+1)K_X) \geq n$. 

Let us now suppose that $Y_{2,b}$ is countably dense.  Recall that for every $x \in Y_{2,b}$ we have a divisor $D_x \sim \lambda_x K_X$ such that $V_x=LLC(X, D_x, x)$ is an exceptional log canonical centre and $\dim(V_x)=2$. Since if we decompose a countably dense set as a countable union of subsets then at least one of the subsets is countably dense, we can suppose that $\lambda_x=\lambda$ for a fixed $\lambda \in \mathbb{Q}^+$. Recall that $\lambda < \frac{3}{\alpha}$. By \cite[Lemma 3.2]{McK}, \cite[Lemma 3.2]{Todorov}, we are in the following situation: 

\begin{displaymath}
\xymatrix{ X'  \ar[r] ^{\pi} \ar[d]^f & X \\
B &}
\end{displaymath}
 where $X'$, $B$ are normal projective varieties, $f$ is a dominant morphism with connected fibres, $\pi$ is a dominant and generically finite morphism and the image under $\pi$ of a general fibre of $f$ is $V_x$.
Arguing exactly as in \cite{Todorov} we can suppose that there exists a proper closed subset $X_1 \subset X$ such that for all $x \not \in X_1$, $D_x$ is smooth at $x$. 
Either $\pi$ is birational or the inverse image of a general $x \in X \setminus X_1$ under $\pi$ is contained in at least two different fibres of $f$.
In the latter case we can apply \cite[Lemma 3.3]{Todorov} and Lemma \ref{tiebreaking}, \ref{due}., \ref{tre}., to conclude that there exists a countably dense set $Y:=Y_{2,b}\cap (X \setminus X_1)$ such that for all $y \in Y$ there exists a divisor $S_y \sim k(2\lambda K_X)$ ( $0 < k \leq 1$, $\lambda < \frac{3}{\alpha}$) such that $LLC(S_y, y)=\{C_y\}$, where $C_y$ is an irreducible variety of dimension at most $1$. Therefore, as in the case of $Y_0$ and $Y_1$, we can apply Lemma \ref{tanti divisori} and Lemma \ref{nonvanishing} (with $m=n$) to conclude that for every $n \geq 1$, if $2\lambda k < \frac{6}{\alpha} \leq 1$, that is 

\begin{equation} \label{a2bis}
\alpha \geq 6
\end{equation}
and
\begin{equation} \label{a3}
\left(\frac{6}{\alpha}+1\right)(1+1/(2g-2-\epsilon))-1+ \delta \leq 1 \Leftrightarrow \alpha \geq \frac{12g-6-6\epsilon}{(2g-2-\epsilon)(1 - \delta)-1}
\end{equation}
(we are considering $\epsilon, \delta$ very small) then $h^0((n+1)K_X) \geq n$.

We can now suppose that $\pi$ is birational. Again arguing as in \cite{Todorov}, we can suppose $X'=X$ and that the general fibre of $f$ over a point $b \in B$, $X_{b}$, is minimal and smooth (and of general type). Moreover for every $b \in B$ there exists a divisor $D_{b} \sim \lambda K_X$ for which we have $\mathcal{J}(D_{b}) \subset \mathcal{O}_X(-X_{b})$ (since the fibre is an exceptional lc centre for $(X, D_{b})$). Hence we are exactly in the situation of Proposition \ref{propositionfibration}: setting $k=1$, we know that if $\lambda (4(n+1)[\beta^2] -1)< 1$ and $n > \lambda$ then there is a surjection $H^0(X, \mathcal{O}_X((n+1)K_X)) \rightarrow H^0(X_b, \mathcal{O}_{X_b}((n+1)K_{X_b}))$ and thus the theorem is proved, because, by \cite[VII.5.4]{BPVdV}, $h^0(X_b, \mathcal{O}_{X_b}((n+1)K_{X_b})) \geq n$. Since $\lambda <\frac {3}{\alpha}$ then the numerical conditions are satisfied as long as

\begin{equation} \label{a4}
\alpha \geq 12(n+1) [\beta^2] -3
\end{equation}

\begin{equation} \label{a5}
\alpha \geq \frac{3}{n}
\end{equation}
\\

It is now time to put everything together, that is to find the best possible value for $\beta$ such that we have the lowest inferior bound for $\alpha$.

Set $b:= \frac{2g-3}{2g-1}$. Note that (\ref{b2}) implies (\ref{b1}) and that $\beta >\frac{2}{b}=\frac{4g-2}{2g-3}$ implies (\ref{b2}). Moreover (\ref{a3}) $\Rightarrow$ (\ref{a0}) $\Rightarrow$ (\ref{a-1}) $\Rightarrow$ (\ref{a5}), (\ref{a3}) $\Rightarrow$ (\ref{a2bis}) and (\ref{a2}) $\Rightarrow$ (\ref{a1}). Besides (\ref{a2}) $\Rightarrow$ (\ref{a3}) if $\beta < \frac{12g-10}{2g-3}$. Therefore we are left to consider only the conditions (\ref{a2}) and (\ref{a4}).  Set $\beta':=\frac{1+\sqrt{1+\frac{b(b+1)}{4(n+1)}}}{b}$ and, finally, choose $\beta:=\sqrt{[\beta'^2]+1-\epsilon'}$ with $0<\epsilon' \ll 1$ and such that $\beta \in \mathbb{Q}$. In this way $\beta > \beta' > \frac{2}{b}=\frac{4g-2}{2g-3}$, $[\beta^2]=[\beta'^2]$ and, actually, $[\beta^2]=4$ for every $n \geq 1$ as soon as $g \geq 19$.

Since (\ref{a2}) does not depend on $n$, for $n$ sufficiently large (\ref{a4}) $\Rightarrow$ (\ref{a2}). Moreover, with that choice of $\beta$ and with $g$ sufficiently large (namely $g \geq 23$), we have that (\ref{a4}) $\Rightarrow$ (\ref{a2}) for every $n$.  Therefore in general (\ref{a4}) $\Rightarrow$ (\ref{a2}), except for the following  finite number of couples $(g,n)$:

$g=2, n=1$;
 $g=4, n=7$;
$g=5, n=2$;
 $g=6, 44 \leq n \leq 52$;
 $g=7, n=2$;
 $g=10, 305 \leq n \leq 381$;
 $g=11, n=9,10$;
 $g=12, n=5$;
 $g=13, n=3$;
 $g=15, n=2$;
 $19 \leq g \leq 22, n=1$.

The theorem now follows by simple computations.


For the last statement just notice that if we go back over the above proof but using Lemma \ref{nonvanishing} with $n=1$ and $m=2$ (instead of $n=1$ and $m=1$) then we can conclude that $h^0(3K_X) > 0$ when $g=2, \alpha \geq 141$, or $g=3, \alpha \geq 69$, or $g=4, \alpha \geq 47$, or $g \geq 5, \alpha \geq 33$. Therefore, for $n=1$, if $g, \alpha$ are as in the hypotheses of the theorem  then not only $h^0(2K_X) > 0$ but also $h^0(3K_X)>0$ so, in these cases, we can say that $h^0(lK_X) \geq 1$ for every $l \geq 2$.

\end{proof}
\begin{remark} \label{controesempio}
There are examples of smooth threefolds $X$ with arbitrarily large volume but $h^0(K_X)=0$: in fact just choose a smooth surface of general type $S$ with $H^0(K_S)=0$, for example a numerical Godeaux surface (see \cite[VII, 10.1]{BPVdV}) and a smooth curve $C$ of genus $g$. Then set $X:=S \times C$:  $\vol(X)=3\vol(C) \vol(S)=3(2g-2)\vol(S) \xrightarrow[g \rightarrow +\infty]{} +\infty$, but by Kunneth's formula $H^0(K_{X}) \cong H^0(K_S) \otimes H^0(K_C)=0$.
\end{remark}
\section{Birationality of pluricanonical maps for threefolds of general type}
In order to have effective estimates on which pluricanonical system determines a birational map, by generic smoothness it suffices to understand when pluricanonical systems separate very general points.  Since we now need to keep track of two points and not only one, in this case to have the best results we cannot argue exactly in the same way as before (that is, applying \cite[Theorem 4.1]{HcMcK}).

Therefore, following \cite{Todorov}, we will use a slightly different technique by Takayama to inductively lower the dimension of lc centers on a birational modification of the original variety.

\begin{theorem} \label{birthreefold}
Let $X$ be a smooth, not $g$-countably dense, projective threefold of general type and such that $\vol(X) > \alpha^3$. Let $l \in \mathbb{N}$, $l \geq 5$. Let $$f(l,g):=3 \sqrt[3]{2} \left( 4l \left[ \frac{32g^2}{((g(l-1)-(l+1))^2}\right]-1 \right).$$ If $l,g, \alpha$ are as in Table \ref{tavolabirthreefold} or, in the other cases, $$\alpha >   \frac{3\sqrt[3]{2}g(1+ 2\sqrt{2})}{g(l-1)-(l+1)-4\sqrt{2}g}$$ then $|lK_X|$ gives a birational map. 

\begin{table}[!ht] \caption{} \label{tavolabirthreefold}
\begin{center}
\begin{tabular}{c|c|c||c|c|c}
$l$ &$g$ & $\alpha$ & $l$ &$g$ & $\alpha$ \\ \cline{1-6}
$5$ & $\not= 9$ & $>f(l,g)$ & $ 9$ & $\leq 4$ & $> f(l,g) $ \\  \cline{2-3} \cline{4-6}
 & $9$ & $118\sqrt[3]{2}$ & $10$ & $\leq 3 $ & $> f(l,g)$ \\ \cline{1-3}  \cline{4-6}
$6$ & $\not= 8$ & $>f(l,g)$ & $ 11$ & $2$ & $> f(l,g)=261\sqrt[3]{2} $ \\  \cline{2-3} \cline{4-6}
 & $8$ & $73\sqrt[3]{2}$ & $12$ & $2 $ & $> f(l,g)=141 \sqrt[3]{2}$ \\ \cline{1-3}  \cline{4-6}
 $7$ & $\leq 23$ & $>f(l,g)$ & $ 13$ & $2$ & $> f(l,g)=153\sqrt[3]{2} $ \\  \cline{2-3} \cline{4-6}
 & $24,\ldots,39$ & $81\sqrt[3]{2}$ & $14$ & $2 $ & $> f(l,g)=165 \sqrt[3]{2}$ \\ \cline{1-3}  \cline{4-6}
 $8$ & $\leq 6$ & $>f(l,g)$ & $ $ & $ $ & $ $ \\  \cline{2-3}
 & $7$ & $93\sqrt[3]{2}$ & $ $ & $ $ & $ $ \\ \cline{1-3}  \cline{4-6}
\end{tabular}
\end{center}
\end{table}

\end{theorem}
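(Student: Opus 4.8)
The plan is to mimic the strategy of Theorem \ref{threefold}, replacing the one-point non-vanishing machinery by a two-point separation argument, since birationality of $|lK_X|$ is (by generic smoothness) equivalent to separating two very general points of $X$. First I would fix a very general countably dense subset $X_0$ of $X$ avoiding $\mathbb{B}_+(K_X)$, all subvarieties not of general type, and all curves of genus $< g$ (using Remarks \ref{notcountablydense} and \ref{generaltype}), and then, for a very general pair $(x,y)$ with $x,y\in X_0$, produce — via the volume hypothesis $\vol(X)>\alpha^3$ and \cite[2.2]{Todorov} — highly singular divisors $A_x,A_y\in|kK_X|$ with large multiplicity at $x$ and $y$ respectively. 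Scaling and taking log canonical thresholds (so that, after Lemma \ref{tiebreaking}.\ref{due}, the pairs are lc but not klt with a single minimal lc centre $V_x\ni x$, $V_y\ni y$, which may be taken exceptional), I would split into cases according to $\dim V_x$ and $\dim V_y$, and — when the minimal centre has dimension $2$ — according to whether $\vol(V_x)>\beta^2$ or $\le\beta^2$, exactly as in the proof of Theorem \ref{threefold}.

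The key new ingredient, following Takayama rather than Hacon--McKernan, is to inductively cut down the dimension of the lc centre \emph{simultaneously carrying both points}, working on a birational modification of $X$: when $\dim V_x\ge 1$ one restricts to (a resolution of) $V_x$, uses the lower bound $\vol(V_x)\ge 2g-2$ coming from the genus hypothesis in the curve case (or $\vol(V_x)>\beta^2$ in the surface case), and builds a new divisor on $V_x$ singular at the relevant point; the numerics of concatenating the coefficients $(\lambda+1)(\mu+1)-1+\delta$ accumulate as in \eqref{a0}, \eqref{a1}, \eqref{a2}. When the centre is a surface of small volume ($\le\beta^2$) one invokes McKernan's family-of-tigers argument (\cite[Lemma 3.2]{McK}, \cite[Lemma 3.2]{Todorov}) to get a fibration $f\colon X'\to B$ over a curve with general fibre a minimal surface of general type, and then a two-point analogue of Proposition \ref{propositionfibration}: the restriction $H^0((l)K_X)\to\bigoplus H^0((l)K_{X_{b_i}})$ is surjective provided $\lambda k(4l[\beta^2]-1)<1$ and $l-1>\lambda k$, and on the surfaces one separates points in the fibres using Bombieri's $|5K_S|$-birationality (for $l\ge 5$). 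In every branch Lemma \ref{tanti divisori} together with the two-point (or $n$-point) version of Lemma \ref{nonvanishing} produces the required second section.

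Finally I would optimise over $\beta$: the surface-small-volume branch forces a constraint of the shape $\lambda(4l[\beta^2]-1)<1$, i.e. roughly $\alpha>3\sqrt[3]{2}(4l[\beta^2]-1)$ (the factor $\sqrt[3]{2}$ entering because two points impose a doubling of the singular divisor), while the dimension-$1$ and dimension-$2$-large-volume branches force constraints of the form $\alpha>\frac{3\beta+\cdots}{\beta(\cdots g\cdots)-\cdots}$; choosing $\beta$ near $\beta'=\frac{1+\sqrt{1+\tfrac{b(b+1)}{4l}}}{b}$ with $b=\frac{2g-3}{2g-1}$ balances these, giving $[\beta^2]=\big[\tfrac{32g^2}{(g(l-1)-(l+1))^2}\big]$ and hence the closed form $f(l,g)$ in the generic case and the uniform bound $\alpha>\frac{3\sqrt[3]{2}g(1+2\sqrt2)}{g(l-1)-(l+1)-4\sqrt2 g}$. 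The small table of exceptional pairs $(l,g)$ arises, as in Theorem \ref{threefold}, from the finitely many cases where the fibration bound $\alpha\ge 12l[\beta^2]-\cdots$ does not dominate the curve-centre bound. The main obstacle I expect is the bookkeeping in the two-point Takayama cut-down: one must ensure at each inductive step that the two lc centres (or the surface fibres through the two points) can be kept \emph{disjoint in support} so that Lemma \ref{nonvanishing} applies, and track how the coefficient $\lambda$ is multiplied and the factor $\sqrt[3]{2}$ introduced through the passage from one point to two, so that the final inequalities come out exactly as in Table \ref{tavolabirthreefold}.
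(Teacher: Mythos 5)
Your plan follows the paper's proof essentially step for step: Takayama's two-point inductive cut-down of lc centres on a birational modification $X'$ (with the $\sqrt[3]{2}$ factor coming from imposing multiplicity at two points), the dichotomy on the volume of a codimension-one centre, McKernan's family-of-tigers fibration plus Proposition \ref{propositionfibration} and Bombieri's results on surfaces in the small-volume case, and the final optimisation over $\beta$ (where your $[\beta^2]=\bigl[\tfrac{32g^2}{(g(l-1)-(l+1))^2}\bigr]$ matches the paper's choice $\beta'=\tfrac{4\sqrt{2}g}{g(l-1)-(l+1)}$ rather than the formula you quote from Theorem \ref{threefold}). The only points you gloss over — the subcase where the generically finite map $\pi$ from McKernan's lemma is not birational, and the need for $h^0(3K_{X''})\neq 0$ to separate points on different fibres when $l$ is odd — are handled in the paper by the same devices as in Theorem \ref{threefold}, so the proposal is essentially the paper's argument.
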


\begin{corollary}
If $\alpha > 1917\sqrt[3]{2}$ (or, in case $g \geq 10$, $\alpha> 117\sqrt[3]{2}$)  then $|lK_X|$ gives a birational map for every $l \geq 5$. More generally, if  $g \not = 9$, $\alpha >  3\sqrt[3]{2}\left( 20\left[  \frac{8g^2}{(2g-3)^2}\right] -1\right)$ or $g=9$, $\alpha > 118\sqrt[3]{2}$ then $|lK_X|$ gives a birational map for every $l \geq 5$. 
\end{corollary}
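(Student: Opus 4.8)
The plan is to read the corollary off Theorem \ref{birthreefold} by a finite comparison. Fix the genus $g$ (recall $g\ge 2$), and let $\tau(l,g)$ denote the threshold on $\alpha$ prescribed there for the pair $(l,g)$: the tabulated entry when $(l,g)$ occurs in Table \ref{tavolabirthreefold}, and otherwise the ``other cases'' quantity
\[ R(l,g):=\frac{3\sqrt[3]{2}\,g(1+2\sqrt2)}{g(l-1)-(l+1)-4\sqrt2\,g}. \]
By Theorem \ref{birthreefold}, if $\vol(X)>\tau(l,g)^{3}$ then $|lK_X|$ is birational; so it suffices to show that $\sup_{l\ge5}\tau(l,g)$ is attained at $l=5$, since then the single hypothesis $\vol(X)>\tau(5,g)^{3}$ forces $|lK_X|$ birational for every $l\ge5$, and one is left only to evaluate $\tau(5,g)$. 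For $g\neq9$ the point $(5,g)$ lies in a row of Table \ref{tavolabirthreefold} whose entry is exactly $f(5,g)=3\sqrt[3]{2}\bigl(20\bigl[\tfrac{8g^{2}}{(2g-3)^{2}}\bigr]-1\bigr)$, the bound in the statement; for $g=9$ the table prescribes $118\sqrt[3]{2}$.

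First I would dispose of the ``other cases'' values. Writing the denominator of $R(l,g)$ as $(g-1)l-(1+4\sqrt2)g-1$ and using $g-1\ge1$, it is strictly increasing in $l$; hence on the range of $l$ for which $R(l,g)$ is the operative bound (equivalently, on which that denominator is positive) $R(l,g)$ is strictly decreasing in $l$, so its supremum there is at the least such $l$. A direct substitution at that value (for fixed $g$, a single number) shows it is comfortably below $f(5,g)$ — indeed of size $O(1)\cdot\sqrt[3]{2}$ uniformly in $g$, since the denominator grows like $(g-1)l$ while the numerator is linear in $g$. Thus the ``other cases'' never govern, and it remains to compare the finitely many $f$-type and numerical entries of Table \ref{tavolabirthreefold} against $\tau(5,g)$.

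For the entries reading ``$\alpha>f(l,g)$'' one uses $f(l,g)=3\sqrt[3]{2}\bigl(4l\bigl[\tfrac{32g^{2}}{(g(l-1)-(l+1))^{2}}\bigr]-1\bigr)$ and must check $f(l,g)\le f(5,g)$. Here is the one genuinely delicate point: because of the round-downs, $l\mapsto f(l,g)$ is \emph{not} monotone, so one cannot simply differentiate. The remedy is the estimate $4l\bigl[\tfrac{32g^{2}}{((g-1)l-(g+1))^{2}}\bigr]\le\dfrac{128g^{2}l}{((g-1)l-(g+1))^{2}}$ together with the observation that the right-hand side, as a function of the real variable $l$, has negative derivative for all $l>\tfrac{g+1}{g-1}$, hence for all $l\ge5$ (since $\tfrac{g+1}{g-1}\le3$ when $g\ge2$); so it is maximal at $l=5$, where it equals $20\cdot\tfrac{8g^{2}}{(2g-3)^{2}}$. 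This already confines $f(l,g)$ to within $60\sqrt[3]{2}$ of $f(5,g)$, and for all but a short explicit list of small $g$ (those for which $\tfrac{8g^{2}}{(2g-3)^{2}}$ has a large fractional part, e.g. $g=6$ and $g=9,\dots,12$) it yields $f(l,g)\le f(5,g)$ outright; for that finite list, and for the handful of numerical entries $118\sqrt[3]{2},\,73\sqrt[3]{2},\,81\sqrt[3]{2},\,93\sqrt[3]{2},\,f(11,2)=261\sqrt[3]{2},\dots,f(14,2)=165\sqrt[3]{2}$, one checks by hand that each is $\le f(5,g)$ (and that for $g=9$ the governing value is $118\sqrt[3]{2}$, while for $g\ge10$ every value in play is $\le117\sqrt[3]{2}$).

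Finally, with $\tau(l,g)\le\tau(5,g)$ established for all $l\ge5$ and all $g\ge2$, the three displayed forms follow. The quantity $\tfrac{8g^{2}}{(2g-3)^{2}}$ is decreasing in $g$ for $g\ge2$, so $\bigl[\tfrac{8g^{2}}{(2g-3)^{2}}\bigr]$ is non-increasing and $f(5,g)$ is maximal at $g=2$, where it equals $3\sqrt[3]{2}\cdot639=1917\sqrt[3]{2}$ (which also exceeds the $g=9$ value $118\sqrt[3]{2}$), giving the unconditional bound; and for $g\ge9$ one has $\bigl[\tfrac{8g^{2}}{(2g-3)^{2}}\bigr]=2$, so $f(5,g)=3\sqrt[3]{2}\cdot39=117\sqrt[3]{2}$, giving the refined bound for $g\ge10$ and explaining why $g=9$ (for which the table records $118\sqrt[3]{2}$) must be excepted. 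The ``more generally'' clause is just the per-genus threshold $\tau(5,g)$ itself. The main obstacle throughout is the non-monotonicity created by the floor functions in Table \ref{tavolabirthreefold}; everything else is routine substitution.
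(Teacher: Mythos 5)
Your proposal is correct and is exactly the argument the paper leaves implicit: the corollary is stated without proof as the $l=5$ specialization of Theorem \ref{birthreefold}, and your verification that $l=5$ gives the worst threshold (handling the non-monotonicity caused by the floor functions, and the $g=9$ exception where the table records $118\sqrt[3]{2}$ rather than $f(5,9)=117\sqrt[3]{2}$) is the routine check the author omits.
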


\begin{remark}
This improves \cite[Theorem 1.2]{Todorov}.
\end{remark}

\begin{remark}
By \cite{ChenChenII} we know that if $l \geq 73$ then $|lK_X|$ is always birational, independently of the volume of $X$. 
\end{remark}

\begin{proof}
We will follow \cite{Todorov}. By \cite[Theorem 3.1]{Takayama}, for every $ 0 < \epsilon < 1$ there exists a smooth projective variety $X'$, a birational morphism $\mu: X' \rightarrow X$ and an approximate Zariski decomposition $\mu^*(K_X) \sim_{\mathbb{Q}} A+E$ where $A=A_{\epsilon}$ is an ample $\mathbb{Q}$-divisor and $E=E_\epsilon$ is an effective $\mathbb{Q}$-divisor that satisfy condition (1),(2),(3) of Takayama's theorem (cf. \cite[Theorem 3.1]{Takayama}).

Without loss of generality we can argue on $X'$ instead of $X$
By Remark \ref{generaltype} $X'$ is at least not $2$-countably dense, hence $g\geq 2$. Furthermore, since $X'$ is not $g$-countably dense then by Remark \ref{notcountablydense} there exists a very general subset $\Lambda \subseteq X'$ such that every curve passing through any point of $\Lambda$ has geometric genus $ \geq g$.
Now we would like to simply apply \cite[Proposition 5.3]{Takayama}, but in order to have better numerical conditions, as in the proof of Theorem \ref{threefold} we will distinguish two different cases depending on the volume of lc centres.

By \cite[Lemma 5.4]{Takayama}, there exists a very general subset $U$ of $X'$ such that for every two distinct points $x, y \in U$ we can construct, depending on $x,y$, an effective divisor $D_1 \sim_{\mathbb{Q}} a_1 A$, with $a_1 < \frac{3 \sqrt[3]{2}}{\alpha (1-\epsilon)}, a_1 \in \mathbb{Q}^+$, such that $x,y \in Z(\mathcal{J}(D_1))$,$(X',D_1)$ is lc not klt at one of the points, say $p(x,y) \in \{x,y\}$, and either $\codim Z(\mathcal{J}(D_1)) > 1$ at $p(x,y)$ or there is one irreducible component of $Z(\mathcal{J}(D_1))$, say $V_{p(x,y)}$, that passes through $p(x,y)$ and such that $\codim V_{p(x,y)} = 1$. We can suppose $U \subseteq \Lambda$.

Fix $\beta \in \mathbb{Q}^+$. 

Let $U':=\{p(x,y) | \codim Z(\mathcal{J}(D_1))=1  \text{ at } p(x,y) \text{ and } \vol(V_{p(x,y)}) \leq \beta^2\}$.  Since $U=U' \cup (U \setminus U')$, then by Lemma \ref{general set}, 4., we are in one of this two cases:


\begin{enumerate}
\item $U \setminus U'$ contains a very general subset $U''$ of $X$;
\item $U'$ is countably dense.
\end{enumerate}

In the first case we know that $\forall x,y \in U''$ either $\codim Z(\mathcal{J}(D_1))>1$ at $p(x,y)$ or $\vol(V_{p(x,y)}) > \beta^2$. Applying the inductive steps of \cite[Proposition 5.3]{Takayama}, we can conclude that given two very general points $x,y \in X'$ there exists (depending on $x,y$) an effective $\mathbb{Q}$-divisor $D$ on $X'$ and $a \in \mathbb{Q}^+$ with $D \sim_\mathbb{Q} aA$ such that $x,y \in Z(\mathcal{J}(X', D) )$ with $\dim Z(\mathcal{J}(X', D) )=0$ around $x$ or $y$, that is $x$ or $y$ is an isolated point of $Z(\mathcal{J}(X', D) )$, and $$a < \left( 1+ \frac{1}{(1-\epsilon)(g-1)} \right) \left( 1+\frac{2 \sqrt{2}}{(1-\epsilon)\beta}\right) \left( 2+ \frac{3 \sqrt[3]{2}}{(1-\epsilon)\alpha}\right)-2+\epsilon f,$$
where $f= \left( 1+ \frac{1}{(1-\epsilon)(g-1)} \right) \left( 2+\frac{2 \sqrt{2}}{(1-\epsilon)\beta}\right)>0$.

By \cite[1.41]{Debarre}, $K_{X'} \sim_{\mathbb{Z}} \mu^*(K_X) + \text{Exc}(\mu) \sim_{\mathbb{Q}}A+E+\text{Exc}(\mu)$, where $\text{Exc}(\mu)$ is the exceptional locus and it is an effective divisor by \cite[ 1.40]{Debarre}. Therefore, replacing $D$ with $D+(l - 1)(E+\text{Exc}(\mu))$ (with $l \in \mathbb{N}^+$), as in the proof of Lemma \ref{nonvanishing} we can conclude that by Nadel's vanishing theorem $|l K_{X'}|$ separates two very general points in $X'$ as soon as $l \geq [a]  + 2$.

Hence, in the first case, considering $l \geq 5$, we now need only to estimate $\alpha$ (depending on $g,\beta,\epsilon$) in order to have $[a] \leq l-2$, that is $a < l-1$. To that purpose, choosing $\epsilon$ sufficiently small and 
\begin{equation}
\beta > \frac{4\sqrt{2}g}{g(l-1)-(l+1)} \label{birbeta1},
\end{equation}
it is enough to ask that

\begin{equation} \nonumber
\left( 1+ \frac{1}{g-1} \right) \left( 1+\frac{2 \sqrt{2}}{\beta}\right) \left( 2+ \frac{3 \sqrt[3]{2}}{\alpha}\right) < l+1  \\
\end{equation}
\begin{equation} \label{biralpha1}
\Leftrightarrow \alpha> 
\frac{3\sqrt[3]{2}g(\beta + 2\sqrt{2})}{\beta(g(l-1)-(l+1))-4\sqrt{2}g}.
\end{equation} 

If, otherwise, the second case occur then $\beta \geq 1$ since the volume of a surface of general type is at least $1$. Moreover by \cite[Lemma 3.2]{McK}, we are in the following situation: 

\begin{displaymath}
\xymatrix{ X''  \ar[r] ^{\pi} \ar[d]^f & X' \\
B &}
\end{displaymath}
 where $X''$, $B$ are normal projective varieties, $f$ is a dominant morphism with connected fibres, $\pi$ is a dominant and generically finite morphism and the image under $\pi$ of a general fibre of $f$ is $V_x$ where $V_x$ is a surface through $x$, a general point. Moreover there exists a divisor $D_x$ such that $V_x$ is a pure log canonical centre of $(X', D_{x})$. In addition, setting $\overline{a}:=  \frac{3\sqrt[3]{2}}{\alpha(1-\epsilon)} \in \mathbb{Q}^+ $ we have that $D_x \sim_{\mathbb{Q}} \overline{a}A$. Moreover for every $p$ in a countably dense subset of $U'$, $V_{p}$ is the image through $\pi$ of a fiber of $f$. 
 
Again as in \cite{Todorov}, we can suppose that there exists a proper closed subset $X'_1 \subset X'$ such that for all $x \not \in X'_1$, $D_{x}$ is smooth at $x$. 

As in \cite{Todorov} and the proof of Theorem \ref{threefold}, we will distinguish two other different subcases, depending on the birationality of $\pi$: in fact, as we have already proved in Theorem \ref{threefold}, either $\pi$ is birational or for a general $x \in X' \setminus X'_1$ there are at least two log canonical centres through $x$. In the latter case for every $x,y$ general points in $X'$ we can produce an effective $\mathbb{Q}$-divisor $D_{x,y}$ on $X'$ and a positive rational number (depending on $x,y$) $\overline{a}'' < \epsilon+ \frac{9\sqrt[3]{2}}{\alpha(1-\epsilon)}$ such that $D_{x,y} \sim_\mathbb{Q} \overline{a}''A$ and such that $D_{x,y}$ satisfies the induction statement $(*_2)$ of \cite[Proposition 5.3]{Takayama}.   

We can now apply the inductive steps of Takayama (see \cite[Proposition 5.3; in particular Lemmas 5.5, 5.8]{Takayama}) and conclude that for every $x,y$ general points in $X'$ there exists an effective $\mathbb{Q}$-divisor $D'_{x,y}$ on $X'$ and a positive rational number $\overline{a}'''$ such that  $D'_{x,y}\sim_{\mathbb{Q}}\overline{a}'''A$, $x,y \in Z(\mathcal{J}(X', D'_{x,y}))$ with $\dim Z(\mathcal{J}(X', D'_{x,y}))=0$ around $x$ or $y$ and  $$\overline{a}''' < \left( 2 + \frac{2}{(1-\epsilon)(g-1)} \right) \left( 1+ \frac{\frac{9}{2}\sqrt[3]{2}}{(1-\epsilon)\alpha}\right)-2+2\epsilon h,$$
where $h= \left( 1+ \frac{1}{(1-\epsilon)(g-1)} \right)>0$. 

As before, we conclude that $|lK_X'|$ separates $x$ and $y$ as soon as $\overline{a}''' < l-1$. To that purpose, choosing $\epsilon$ sufficiently small, it is enough to ask that 
\begin{equation} \label{biralpha3}
\left( 2+\frac{2}{g-1} \right) \left( 1+ \frac{\frac{9}{2}\sqrt[3]{2}}{\alpha} \right) < l+1 \Leftrightarrow \alpha >  \frac{9 \sqrt[3]{2}g}{g(l-1)-(l+1)}.
\end{equation}

We can now assume that $\pi$ is birational. Moreover since $K_{X''} \sim \pi^*(K_X')+ \textrm{Exc}(\pi)$, then we can suppose that the general fiber $X''_b$ is a pure log canonical centre of $D''_b \sim_{\mathbb{Q}} \overline{a} K_{X''}$.  Arguing as in \cite{Todorov} we can suppose $X''$ is smooth and that the general fiber $X''_b$ of $f$ is smooth and minimal (and of general type). In addition, since the fibers of $f$ are all numerically equivalent, we also know that $\vol(X''_b) \leq \beta^2$.

As before to prove that $|lK_{X'}|$ separates two very general points it is enough to show that $|lK_{X''}|$ separates two very general points on $X''$. 

Choose $x,y$ general points on the same fiber. Since for all $l \geq 5$, $|lK_{X''_b}|$ gives a birational map on $X''_b$ by a result of Bombieri (cf.\ \cite{Bombieri}), in order to separate $x$ and $y$ we can simply apply Proposition \ref{propositionfibration} with $k=1$, $n=l-1$, obtaining the following conditions:
\begin{equation}
\overline{a}(4l[\beta^2]-1)<1,
\end{equation}
\begin{equation}
\overline{a} < l-1,
\end{equation}
that are implied by
\begin{equation} \label{biralpha4}
\alpha > 3 \sqrt[3]{2}(4l[\beta^2]-1),
\end{equation}
\begin{equation} \label{biralpha5}
\alpha > \frac{3\sqrt[3]{2}}{l-1}.
\end{equation}
(Recall that $\beta \geq 1$ and  hence $[\beta^2] \geq 1$).

If $x,y$ are on different fibers then, since by a result of Bombieri $H^0(2K_{X''_b}) \not = 0$, we can apply Proposition \ref{propositionfibration} with $k=2$ and $n=1$, obtaining the following conditions:

\begin{equation}
2\overline{a}(8[\beta^2]-1)<1,
\end{equation}
\begin{equation}
\overline{a} < \frac{1}{2},
\end{equation}
that are implied by
\begin{equation} \label{biralpha6}
\alpha > 6 \sqrt[3]{2}(8[\beta^2]-1),
\end{equation}
\begin{equation} \label{biralpha7}
\alpha > 6\sqrt[3]{2}.
\end{equation}

Under these assumptions $H^0(2K_{X''})$ separates $x$ and $y$. Then if $l$ is even also $H^0(lK_{X''})$ separates $x$ and $y$. If $l$ is odd then if moreover
\begin{equation} \label{biralpha8}
H^0(3K_{X''}) \not = 0
\end{equation}
we can conclude in the same way. 



To deal with condition (\ref{biralpha8}) we could simply use Theorem \ref{threefold}, but since we do not need $h^0(3K_{X''}) \geq 2$ (because for our purposes it is enough to ask that $h^0(3K_{X''}) \geq 1$) then instead of applying Theorem \ref{threefold} in its full extent we can simply use the results about $h^0(3K_X)$ stated at the end of the proof of Theorem \ref{threefold}.

It is now time to put everything together, that is to find the best possible value for $\beta$ such that we have the lowest inferior bound for $\alpha$.





If $\beta < 1$ we need only to consider (\ref{birbeta1}) and (\ref{biralpha1}). 

If $\beta \geq 1$ then, since $l \geq 5$ and $g \geq 2$, (\ref{biralpha4}) $\Rightarrow$ (\ref{biralpha3}) $\Rightarrow$ (\ref{biralpha5}) and (\ref{biralpha4}) $\Rightarrow$ (\ref{biralpha6}) $\Rightarrow$ (\ref{biralpha7}). Moreover, by (\ref{birbeta1}), (\ref{biralpha4}) $\Rightarrow$ (\ref{biralpha8}). 
Thus, if $\beta \geq 1$, we are left to consider only these conditions: (\ref{birbeta1}), (\ref{biralpha1}), (\ref{biralpha4}). 

Set $\beta':=\frac{4g\sqrt{2}}{g(l-1)-(l+1)}$. Since $l \geq 5$, $\beta'>0$. Finally, if $\beta' \geq 1$ choose $\beta:=\sqrt{[\beta'^2]+1-\epsilon'}$, if $\beta' < 1$ choose $\beta:=1-\epsilon'$, with $0<\epsilon' \ll 1$ and such that $\beta \in \mathbb{Q}$. (\ref{birbeta1}) is obviously verified. Besides, in this way $[\beta^2]=[\beta'^2]$. 

Now some simple computations allow us to conclude: for $(l,g)$ not as in Table \ref{tavolabirthreefold}  we have that $\beta' < 1$ and hence that $|lK_X|$ gives a birational map for $\alpha >   \frac{3\sqrt[3]{2}g(1+ 2\sqrt{2})}{g(l-1)-(l+1)-4\sqrt{2}g}$. For $l=7, g=24, \ldots, 39$ and $l=8$, $g=7$ it turns out that it is better to take a larger value for $\beta'$, namely $\beta'=1$. Hence for all $(l,g)$ as in Table  \ref{tavolabirthreefold} we have that (\ref{biralpha4}) $\Rightarrow$ (\ref{biralpha1}) except for $l=5, g=9$ and $l=6, g=8$.

\end{proof}

\begin{remark} \label{4KX}
As Todorov pointed out in \cite{Todorov}, we cannot expect to have analogous results about $|4K_X|$. In fact just choose a smooth surface of general type $S$ such that $|4 K_S|$ does not give a birational map, for example a smooth minimal surface $S$ with $K_{S}^2=1$ and $h^0(K_S)=2$  (cf. \cite[VII, 7.1]{BPVdV}), and a smooth curve $C$ of genus $g$. Then set $X:=S \times C$.  $\vol(X)=3(2g-2)\vol(S) \xrightarrow[g \rightarrow +\infty]{} +\infty$, but since the map $\phi_{|4K_X|}$ given by $|4K_X|$ is, by Kunneth's formula, essentially constructed with the two maps $\phi_{|4K_S|}$ and $\phi_{|4K_C|}$ followed by a Segre's embedding, then $\phi_{|4K_X|}$ is never birational. 
\end{remark}

In the wake of Remark \ref{4KX}, and when the volume is sufficiently large, we can characterize threefolds for which $|4K_X|$ does not give a birational map. In fact if a threefold $X$ satisfies the conditions on $\alpha$ as listed in the proof of Theorem \ref{birthreefold} (imposing, this time, $l=4$) but, at the same time, $\phi_{|4K_X|}$ is not birational, then $X$ must necessarily be birational to a threefold fibered by surfaces for which the fourth pluricanonical map is not birational. Such surfaces $X''_b$ have volume $1$ and geometric genus $p_g=2$ by \cite{Bombieri} (see also\cite[Proposition VII.7.1 and VII.7.3]{BPVdV}). Therefore we can state the following:

\begin{corollary} \label{4thmap} Let $X$ be a smooth projective threefold of general type such that $\vol(X) > \alpha^3$. If $\alpha > 6141\sqrt[3]{2}$ 
 then $|4K_X|$ does not give a birational map if, and only if, $X$ is birational to a fibre space $X''$, with $f: X'' \rightarrow B$, where $B$ is a curve, such that the general fiber $X''_b$ is a smooth minimal surface of general type with volume $1$ and geometric genus $p_g=2$. More generally, if $X$ is not $g$-countably dense and if $g,\alpha$ are as in Table \ref{tavola4thmap} or, in the other cases, $\alpha > 3 \sqrt[3]{2} \left( 16 \left[ \frac{32g^2}{(3g-5)^2}\right] -1\right)$, then $|4K_X|$ does not give a birational map if, and only if, $X$ is birational to a fibre space $X''$ as above. 
\begin{table}[!ht] \caption{} \label{tavola4thmap}
\begin{center}

\begin{tabular}{c|c||c|c}
$g$ & $\alpha$ & $g$ & $\alpha$ \\ \hline
$11$ & $> 237\sqrt[3]{2}$ & $39$ & $> 168\sqrt[3]{2}$ \\ \hline
$30, \ldots, 37$ & $> 189\sqrt[3]{2}$ & $40$ & $ > 156 \sqrt[3]{2}$ \\ \hline
$38$ & $> 182\sqrt[3]{2}$ & $41$ & $ > 146 \sqrt[3]{2}$ \\ \hline
\end{tabular}
\end{center}
\end{table}
\end{corollary}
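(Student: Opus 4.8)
The plan is to obtain Corollary~\ref{4thmap} as an essentially immediate consequence of the proof of Theorem~\ref{birthreefold} run with $l=4$. Concretely, I would re-execute that argument verbatim, isolate the single step in which the hypothesis $l\ge 5$ was genuinely used, show that for $l=4$ that step either still works or else produces exactly the fibre space in the statement, and finally optimise the parameter $\beta$ to read off the numerical bound. The ``if'' implication is the easy half, entirely in the spirit of Remark~\ref{4KX}: if $X$ is birational to a fibre space $f\colon X''\to B$ over a curve with general fibre $X''_b$ a smooth minimal surface of general type with $\vol(X''_b)=1$, $p_g(X''_b)=2$, then --- birationality of the fourth pluricanonical map being a birational invariant of smooth projective varieties --- it is enough to see that $\phi_{|4K_{X''}|}$ is not birational; but for general $b$ the fibre is smooth with trivial normal bundle, adjunction gives $(4K_{X''})|_{X''_b}\sim 4K_{X''_b}$, the restriction of $\phi_{|4K_{X''}|}$ to $X''_b$ factors (up to a fixed linear subspace and a linear projection) through $\phi_{|4K_{X''_b}|}$, and by Bombieri's classification (\cite{Bombieri}, \cite[VII.7.1, VII.7.3]{BPVdV}) this last map is generically two-to-one for a surface with $(\vol,p_g)=(1,2)$, hence $\phi_{|4K_{X''}|}$ cannot separate two general points on a common fibre of $f$.

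For the ``only if'' direction I would assume $\vol(X)>\alpha^3$ with $\alpha$ as in the statement and that $\phi_{|4K_X|}$ is \emph{not} birational, and copy the proof of Theorem~\ref{birthreefold} with $l=4$: pass to the Takayama modification $X'$, apply \cite[Lemma~5.4]{Takayama}, and split according to whether $U\setminus U'$ contains a very general subset or $U'$ is countably dense. As there, the first case yields separation of two very general points once (\ref{birbeta1}) and (\ref{biralpha1}) hold with $l=4$, contradicting our assumption; and the sub-case ``$\pi$ not birational'' of the second case is excluded by (\ref{biralpha3}) with $l=4$. Hence we land in the sub-case ``$\pi$ birational'', which produces a fibre space $f\colon X''\to B$ with $X''$ smooth and birational to $X$, $B$ a curve, and general fibre $X''_b$ a smooth minimal surface of general type with $\vol(X''_b)\le\beta^2$.

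Now the one place in Theorem~\ref{birthreefold} that required $l\ge5$ is the separation of two general points lying on a common fibre $X''_b$, which used birationality of $|lK_{X''_b}|$. For $l=4$ I would split once more. If $X''_b$ is \emph{not} a minimal surface with $(\vol,p_g)=(1,2)$, then $\phi_{|4K_{X''_b}|}$ is birational by \cite{Bombieri}, \cite[VII.7.1]{BPVdV}, and Proposition~\ref{propositionfibration} with $k=1$, $n=3$ --- its hypotheses secured by (\ref{biralpha4}) and (\ref{biralpha5}) at $l=4$ --- separates the two points, again a contradiction; so $X''_b$ \emph{is} such a surface, i.e.\ $\vol(X''_b)=1$ and $p_g(X''_b)=2$, which is precisely the fibre space asserted. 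Two general points on different fibres are separated, as in Theorem~\ref{birthreefold}, by Proposition~\ref{propositionfibration} with $k=2$, $n=1$ (hypotheses from (\ref{biralpha6}), (\ref{biralpha7})) already inside $H^0(2K_{X''})$; and since $l=4$ is even this already gives separation in $H^0(4K_{X''})$, so --- the one simplification over the odd-$l$ case --- the auxiliary condition (\ref{biralpha8}) about $h^0(3K_{X''})\neq0$, and with it any appeal to Theorem~\ref{threefold}, never enters. Thus, if $\phi_{|4K_X|}$ is not birational and $\alpha$ is large enough that all the relevant inequalities of the proof of Theorem~\ref{birthreefold} hold with $l=4$, then $X$ is birational to the asserted fibre space.

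For the explicit bounds I would optimise $\beta$ exactly as at the end of the proof of Theorem~\ref{birthreefold}: with $\beta':=\frac{4\sqrt2\,g}{3g-5}$ and $\beta:=\sqrt{[\beta'^2]+1-\epsilon'}$ one gets $[\beta^2]=[\beta'^2]$ and (\ref{birbeta1}), so the binding conditions are (\ref{biralpha1}) and the dominant (\ref{biralpha4}), i.e.\ $\alpha>3\sqrt[3]{2}\bigl(16\bigl[\frac{32g^2}{(3g-5)^2}\bigr]-1\bigr)$, with finitely many values of $g$ (those of Table~\ref{tavola4thmap}) for which (\ref{biralpha1}) forces one to enlarge $[\beta^2]$ by one unit. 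Since a threefold of general type is never $2$-countably dense (Remark~\ref{generaltype}), $g=2$ is always admissible, and there $\beta'=8\sqrt2$, $[\beta'^2]=128$, so (\ref{biralpha4}) reads $\alpha>3\sqrt[3]{2}(16\cdot128-1)=6141\sqrt[3]{2}$: the unconditional bound. I expect essentially no new difficulty here; the genuinely substantive point --- and thus the hard part --- is recognising that the obstruction to the method in Theorem~\ref{birthreefold} for $l=4$ (failure of $|4K_S|$ to be birational on a surface fibre) is controlled exactly by Bombieri's list $(\vol,p_g)=(1,2)$, turning that obstruction into the structure theorem; the rest is bookkeeping of constants.
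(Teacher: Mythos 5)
Your proposal is correct and follows essentially the same route as the paper: the "if" direction via the Künneth/fibre-restriction observation of Remark \ref{4KX}, and the "only if" direction by rerunning the proof of Theorem \ref{birthreefold} with $l=4$, noting that the only step needing $l\geq 5$ is the separation of two points on a common fibre, which for $l=4$ fails exactly when $X''_b$ has $(\vol,p_g)=(1,2)$ by Bombieri's classification, with condition (\ref{biralpha8}) dropped since $4$ is even, and the same choice $\beta'=\frac{4\sqrt{2}g}{3g-5}$ yielding the stated bounds (the paper handles the exceptional $g$ of Table \ref{tavola4thmap} by slightly different explicit adjustments of $\beta'$ and by letting (\ref{biralpha1}) dominate for $g=38,\ldots,41$, but this is the same bookkeeping you describe).
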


\begin{proof}
The ``if'' part is trivial (and not depending on $g, \alpha$). 
For the ``only if'', simply consider again all the conditions on $\alpha$ as in the proof of Theorem \ref{birthreefold}, but with $l=4$ instead of $l \geq 5$; moreover, instead of the usual value for $\beta'$, it is better to take a larger value in some cases: for $g=11$ $\beta':=\sqrt{5}$, for $g=30, \ldots, 37$, $\beta':=2$. Note also that the condition (\ref{biralpha8}) is not needed.
Now,  for $g=2, \ldots 37$ and $g \geq 42$ we have that (\ref{biralpha4}) $\Rightarrow$ (\ref{biralpha1}), while for $g=38,39,40,41$ (\ref{biralpha1}) $\Rightarrow$ (\ref{biralpha4}). 
\end{proof}

\begin{remark} \label{controesempiomappa4canonica}
In \cite{Dong} and \cite{ChenZhang} there is an example of a smooth canonical threefold $X$ with volume $=2$ and such that $|4K_X|$ does not give a birational map. For this $X$ the thesis of Corollary \ref{4thmap} does not apply: in fact for a generic irreducible curve $C_0$ in any family of curves on $X$ we have $K_X \cdot C_0 \geq 2$ (see \cite[Example 6.3]{ChenZhang}), but if $X$ were birationally fibred by surfaces of volume $1$ and $p_g=2$, we would have, on a general fibre, a family of curves for which $K_X \cdot C_0 \leq 1$.  
\end{remark}

Analogously, dealing this time with the 3rd pluricanonical map, considering the characterization of surfaces with a birational 3rd pluricanonical map (cf.  \cite[Proposition VII.7.1, VII.7.2 and VII.7.3]{BPVdV}) and requiring $X$ not to be  $3$-countably dense, we can state also the following:

\begin{corollary} \label{3rdmap} Let $X$ be a smooth, not $3$-countably dense, projective threefold of general type such that $\vol(X) > \alpha^3$. If $\alpha > 5178\sqrt[3]{2}$ 
then $|3K_X|$ does not give a birational map if, and only if, $X$ is birational to a fibre space $X''$, with $f: X'' \rightarrow B$, where $B$ is a curve, such that the general fibre $X''_b$ is a smooth minimal surface of general type and either it has volume $1$ and geometric genus $p_g=2$ or it has volume $2$ and $p_g=3$.  More generally, if $X$ is not $g$-countably dense, with $g \geq 3$, and if $g,\alpha$ are as in Table \ref{tavola3rdmap} or, in the other cases, $\alpha > 6 \sqrt[3]{2} \left( 12 \left[ \frac{8g^2}{(g-2)^2}\right] -1\right)$ then $|3K_X|$ does not give a birational map if, and only if, $X$ is birational to a fibre space $X''$ as above. 

\begin{table}[!ht] \caption{} \label{tavola3rdmap}
\begin{center}

\begin{tabular}{c|c||c|c}
$g$ & $\alpha$ & $g$ & $\alpha$ \\ \hline
$11$ & $> 858\sqrt[3]{2}$ & $35, \ldots, 37$ & $> 642\sqrt[3]{2}$ \\ \hline
$19$ & $> 714\sqrt[3]{2}$ & $38$ & $ > 640 \sqrt[3]{2}$ \\ \hline
\end{tabular}
\end{center}
\end{table}
\end{corollary}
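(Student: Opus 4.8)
The plan is to imitate, almost verbatim, the proof of Theorem \ref{birthreefold} (and its specialization in Corollary \ref{4thmap}), now taking $l=3$. The ``if'' direction is immediate and independent of $g$ and $\alpha$: if $X$ is birational to a fibre space $f\colon X''\to B$ whose general fibre $X''_b$ is a minimal surface of general type with $(\vol(X''_b),p_g(X''_b))\in\{(1,2),(2,3)\}$, then $3K_{X''}|_{X''_b}=3K_{X''_b}$ by adjunction, so $\phi_{|3K_X|}=\phi_{|3K_{X''}|}$ restricts on a general fibre to a sub-system of $|3K_{X''_b}|$, which by Bombieri's classification (cf.\ \cite[Proposition VII.7.1, VII.7.2 and VII.7.3]{BPVdV} and \cite{Bombieri}) is not birational; hence $\phi_{|3K_X|}$ cannot separate two general points of $X''_b$ and is not birational.

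For the ``only if'' direction I would run the proof of Theorem \ref{birthreefold} with $l=3$. After the usual reduction to a birational model $X'$ carrying Takayama's approximate Zariski decomposition, and after fixing $\beta\in\mathbb{Q}^+$, one splits into the same two cases. In the first case, where Takayama's inductive cutting-down of lc centres reaches dimension $0$ around one of the two points, Nadel vanishing gives that $|3K_X|$ separates two very general points once $\alpha$ satisfies the $l=3$ instances of (\ref{birbeta1}) and (\ref{biralpha1}); here the hypothesis $g\geq 3$ is \emph{essential}, since for $l=3$ the denominator $2\beta(g-2)-4\sqrt{2}g$ of (\ref{biralpha1}) is negative when $g=2$, so this branch never produces birationality of $|3K_X|$ for $g=2$ — that is exactly why, unlike in Theorem \ref{birthreefold}, one must assume $X$ not $3$-countably dense. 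In the second case one obtains the fibration $\pi\colon X''\to X'$, $f\colon X''\to B$ with general fibre of volume $\leq\beta^2$; if $\pi$ is not birational, the ``two lc centres through a general point'' argument plus Takayama's steps give birationality of $|3K_X|$ under the $l=3$ instances of (\ref{biralpha3}) and (\ref{biralpha4}). If $\pi$ is birational, we may assume $X''$ smooth with smooth minimal general fibre $X''_b$, $\vol(X''_b)\leq\beta^2$: Proposition \ref{propositionfibration} with $k=1$, $n=2$ shows $|3K_{X''}|$ separates two general points of the same fibre as soon as $|3K_{X''_b}|$ is birational, which by Bombieri forces, if it fails, $(\vol(X''_b),p_g(X''_b))\in\{(1,2),(2,3)\}$; and — the one genuine deviation from the odd-$l$ discussion of Theorem \ref{birthreefold} — for two general points on distinct fibres I would apply Proposition \ref{propositionfibration} directly with $k=2$, $n=2$, producing a surjection $H^0(3K_{X''})\to H^0(3K_{X''_{b_1}})\oplus H^0(3K_{X''_{b_2}})$ with both summands nonzero, hence separating the two points. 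This needs only $\alpha>6\sqrt[3]{2}(12[\beta^2]-1)$ (up to the usual $\epsilon$) and, crucially, makes condition (\ref{biralpha8}) unnecessary. Thus: if $\alpha$ is large and $X$ is not birational to a fibre space as in the statement, $|3K_X|$ is birational.

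It then remains to optimize over $\beta$, exactly as in Theorem \ref{birthreefold} and Corollary \ref{4thmap}. Since $l=3$, condition (\ref{birbeta1}) reads $\beta>\frac{2\sqrt{2}g}{g-2}$, so one sets $\beta':=\frac{2\sqrt{2}g}{g-2}$ (which is $>1$ for every $g\geq 3$) and $\beta:=\sqrt{[\beta'^2]+1-\epsilon'}$, so that $[\beta^2]=[\beta'^2]=\left[\frac{8g^2}{(g-2)^2}\right]$; among the surviving inequalities the fibration bound $\alpha>6\sqrt[3]{2}(12[\beta^2]-1)$ then dominates for all but finitely many $g$, the exceptions being those where $2\beta'(g-2)-4\sqrt{2}g$ is close to $0$ and hence (\ref{biralpha1}) blows up, so that it pays to enlarge $\beta'$ slightly to an integer-squared value and relax (\ref{biralpha1}) at the cost of a marginally larger $[\beta^2]$ — this yields the entries of Table \ref{tavola3rdmap}. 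The worst case is $g=3$, where $[\beta'^2]=72$ and the bound becomes $\alpha>6\sqrt[3]{2}(12\cdot 72-1)=5178\sqrt[3]{2}$, giving the uniform statement. The main obstacle I anticipate is precisely this final bookkeeping — tracking which of the surviving inequalities dominates as $g$ varies, and choosing the optimal $\beta'$ in the borderline cases — together with the (easy but necessary) care of bypassing (\ref{biralpha8}) in the odd case $l=3$ by the direct $k=2$, $n=2$ use of Proposition \ref{propositionfibration}.
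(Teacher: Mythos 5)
Your proposal is correct and follows essentially the same route as the paper: rerun Theorem \ref{birthreefold} with $l=3$, replace the odd-$l$ two-fibre argument by a direct application of Proposition \ref{propositionfibration} with $k=2$, $n=2$ (so that condition (\ref{biralpha8}) is no longer needed, at the cost of the stronger condition $\alpha>6\sqrt[3]{2}(12[\beta^2]-1)$), and then optimize $\beta'=\frac{2\sqrt{2}g}{g-2}$, enlarging it in the borderline cases where $[\beta'^2]+1-\beta'^2$ is small so that (\ref{biralpha1}) would otherwise blow up. Your identification of $g=3$ as the worst case giving $5178\sqrt[3]{2}$, and of why $g\geq 3$ is forced by the sign of the denominator of (\ref{biralpha1}) when $l=3$, matches the paper's computation.
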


\begin{proof}
Consider again the proof of Theorem \ref{birthreefold}, but with $l=3$: this time, however, if $x,y$ are on different fibers then we need to apply Proposition \ref{propositionfibration} with $k=2, n=2$ obtaining a new condition (\ref{biralpha6}), namely $\alpha> 6\sqrt[3]{2}(12[\beta^2]-1)$, and a new condition (\ref{biralpha7}), namely $\alpha> 3 \sqrt[3]{2}$. As before, (\ref{biralpha8}) is no longer needed. Moreover instead of the usual value for $\beta'$, it is better to take a larger value in some cases: for $g=11$ $\beta':=\sqrt{12}$, for $g=19$ $\beta':=\sqrt{10}$, for $g=35, \ldots, 37$, $\beta':=3$. Therefore this time we have that (\ref{biralpha6})$\Rightarrow$ (\ref{biralpha4}) and we are left to consider only conditions (\ref{birbeta1}), (\ref{biralpha1}) and (\ref{biralpha6}). For $g\not=38$  we have that (\ref{biralpha6}) $\Rightarrow$ (\ref{biralpha1}), while for $g=38$ (\ref{biralpha1}) $\Rightarrow$ (\ref{biralpha6}). 
\end{proof}

\begin{remark}
There are examples of threefolds $X$ of general type with large volume and $|3K_X|$ birational even if $X$ is covered by curves of genus $2$: just consider the product $C_2 \times C_g \times C_g$ (where $C_a$ is a smooth curve of genus $a$) and let $g$ go to infinity. 
\end{remark}

\begin{remark}
By Corollaries \ref{4thmap} and \ref{3rdmap} we have that if $X$ is a threefold of general type, not $3$-countably dense and of sufficiently large volume then the birationality of $|3K_X|$ implies  the birationality of $|4K_X|$.
\end{remark}

We can say something also for the second pluricanonical map, even if in this case we need to suppose that $X$ is not $4$-countably dense. Note that the classification of surfaces for which the second pluricanonical map is not birational has not been completed yet. The reader can refer to \cite[\S 2]{Catanese} for a survey on this subject and to \cite[Theorem 0.7, Remark 0.8]{Borrelli} for a partial classification (however notice that by our assumption about countably density the \textit{standard case} and the symmetric product case cannot occur).

\begin{corollary}  \label{2ndmap} Let $X$ be a smooth, not $4$-countably dense, projective threefold of general type and such that $\vol(X) > \alpha^3$. If $\alpha > 24570\sqrt[3]{2}$ 
then $|2K_X|$ does not give a birational map if, and only if, $X$ is birational to a fibre space $X''$, with $f: X'' \rightarrow B$, where $B$ is a curve, such that such that the general fiber $X''_b$ is a smooth minimal surface of general type and $|2K_{X''_b}|$ does not give a birational map.
More generally, if $X$ is not $g$-countably dense, with $g \geq 4$, and if $g,\alpha$ are as in Table \ref{tavola2ndmap} or, in the other cases, $\alpha > 6 \sqrt[3]{2} \left( 8 \left[ \frac{32g^2}{(g-3)^2}\right] -1\right)$, then $|2K_X|$ does not give a birational map if, and only if, $X$ is birational to a fibre space $X''$ as above. 
\begin{table}[!ht] \caption{} \label{tavola2ndmap}
\begin{center}


\begin{tabular}{c|c||c|c}
$g$ & $\alpha$ & $g$ & $\alpha$ \\ \hline
$8$ & $> 3930\sqrt[3]{2}$ & $43,44$ & $> 1770\sqrt[3]{2}$ \\ \hline
$12$ & $> 2730\sqrt[3]{2}$ & $53,54$ & $> 1722\sqrt[3]{2}$ \\ \hline
$14$ & $> 2490\sqrt[3]{2}$ & $69, \ldots, 72$ & $ > 1674 \sqrt[3]{2}$ \\ \hline
$22$ & $> 2058\sqrt[3]{2}$ &$73$ & $> 1630\sqrt[3]{2}$ \\ \hline
$24$ & $> 2010\sqrt[3]{2}$ &  $101, \ldots, 110$ & $ > 1626 \sqrt[3]{2}$ \\ \hline
$26$ & $> 1962\sqrt[3]{2}$ & $197, \ldots, 241$ & $> 1578\sqrt[3]{2}$ \\ \hline
$29$ & $> 1914\sqrt[3]{2}$ & $242$ & $ > 1560 \sqrt[3]{2}$ \\ \hline
$32$ & $> 1866\sqrt[3]{2}$ & $243$ & $> 1532\sqrt[3]{2}$ \\ \hline
$37$ & $> 1818\sqrt[3]{2}$ & & \\ \hline

\end{tabular}
\end{center}
\end{table}
\end{corollary}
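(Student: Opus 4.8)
The plan is to run the proof of Theorem~\ref{birthreefold} with $l=2$, tracking the numerical conditions, exactly as was done for $l=4$ in Corollary~\ref{4thmap} and for $l=3$ in Corollary~\ref{3rdmap}. The ``if'' part is immediate and independent of $g,\alpha$: if $X$ is birational to a fibre space $f\colon X''\to B$ over a curve whose general fibre $X''_b$ is a smooth minimal surface of general type with $|2K_{X''_b}|$ not birational, then by adjunction $K_{X''}|_{X''_b}=K_{X''_b}$, so the restriction map $H^0(X'',2K_{X''})\to H^0(X''_b,2K_{X''_b})$ exhibits the restriction of $\phi_{|2K_{X''}|}$ to $X''_b$ as factoring through $\phi_{|2K_{X''_b}|}$; the latter fails to separate two general points of $X''_b$, hence so does $\phi_{|2K_X|}$.

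For the ``only if'' part one assumes $|2K_X|$ is not birational, so (by generic smoothness) some pair of very general points $x,y$ is not separated, and replays Theorem~\ref{birthreefold} verbatim with $l=2$, in which case $g(l-1)-(l+1)=g-3$. As there, one builds $D_1\sim_{\mathbb Q}a_1A$ with $a_1<\tfrac{3\sqrt[3]{2}}{\alpha(1-\epsilon)}$, fixes $\beta\in\mathbb Q^+$, and splits according to whether the codimension-$1$ lc centre $V_{p(x,y)}$, when it occurs, has $\vol\le\beta^2$. In the first case Takayama's induction cuts the centre down to a point and Nadel vanishing separates $x,y$ by $|2K_{X'}|$ provided the $l=2$ forms of (\ref{birbeta1}) and (\ref{biralpha1}) hold; here the denominator of (\ref{biralpha1}) is $\beta(g-3)-4\sqrt2\,g$, which is positive only for $g\ge4$ — this is the one place where the hypothesis that $X$ be \emph{not} $4$-countably dense is indispensable. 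In the second case one obtains, via \cite[Lemma 3.2]{McK}, a fibration $X''\to B$ by surfaces of volume $\le\beta^2$; the subcase in which $\pi$ is not birational is handled as in Theorem~\ref{birthreefold} by the $l=2$ form of (\ref{biralpha3}), which will turn out to be dominated by the remaining conditions.

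The decisive subcase is that in which $\pi$ is birational, so $X$ is birationally the fibre space $X''\to B$ with general fibre $X''_b$ a smooth minimal surface of general type of volume $\le\beta^2$. If $x,y$ lie on distinct fibres, Proposition~\ref{propositionfibration} with $k=2$, $n=1$ produces a surjection $H^0(2K_{X''})\twoheadrightarrow H^0(2K_{X''_{b_1}})\oplus H^0(2K_{X''_{b_2}})$ — only $h^0(2K_{X''_{b_i}})\ne0$ is needed, which holds by \cite{Bombieri} — separating $x,y$ once the $l=2$ analogues of (\ref{biralpha6}), namely $\alpha>6\sqrt[3]{2}(8[\beta^2]-1)$, and of (\ref{biralpha7}) hold; since $l$ is even, condition (\ref{biralpha8}) is not needed here. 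If instead $x,y$ lie on the same general fibre $X''_b$, Proposition~\ref{propositionfibration} with $k=n=1$ shows $|2K_{X''}|$ separates them whenever $|2K_{X''_b}|$ separates the corresponding points of $X''_b$ — so unless $|2K_{X''_b}|$ fails to be birational, all very general pairs are separated and $|2K_X|$ is birational, against the hypothesis. Hence non-birationality of $|2K_X|$ forces precisely the asserted fibration structure; moreover, as remarked in the text, the hypothesis ``not $4$-countably dense'' prevents the general fibre from carrying a family of curves of genus $\le3$, so in the classification of surfaces with non-birational $|2K|$ the standard and symmetric-product cases are automatically excluded.

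Finally one optimises $\beta$ exactly as in Theorem~\ref{birthreefold}: put $\beta':=\tfrac{4g\sqrt2}{g-3}$ (taking a larger value for the finitely many small genera listed in Table~\ref{tavola2ndmap}) and $\beta:=\sqrt{[\beta'^2]+1-\epsilon'}\in\mathbb Q$, so that (\ref{birbeta1}) holds and $[\beta^2]=[\beta'^2]=\bigl[\tfrac{32g^2}{(g-3)^2}\bigr]$; the surviving conditions are the $l=2$ forms of (\ref{birbeta1}), (\ref{biralpha1}) and (\ref{biralpha6}), and for all $g$ not in the table one checks $(\ref{biralpha6})\Rightarrow(\ref{biralpha1})$, giving $\alpha>6\sqrt[3]{2}\bigl(8\bigl[\tfrac{32g^2}{(g-3)^2}\bigr]-1\bigr)$ and, for $g=4$, $\alpha>24570\sqrt[3]{2}$. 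The only real difficulty is bookkeeping: correctly specialising every inequality (\ref{birbeta1})--(\ref{biralpha8}) to $l=2$ and verifying that (\ref{biralpha6}) — which for $l=2$ carries the factor $8[\beta^2]$ rather than the larger $4l[\beta^2]$ that dominated in Theorem~\ref{birthreefold} — still dominates the conditions coming from the same-fibre subcase (for $l=2$ these are the even weaker $\alpha>3\sqrt[3]{2}(8[\beta^2]-1)$ and $\alpha>3\sqrt[3]{2}$, so this indeed holds).
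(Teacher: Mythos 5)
Your proposal is correct and follows essentially the same route as the paper's (very terse) proof: rerun Theorem \ref{birthreefold} with $l=2$, observe that $g(l-1)-(l+1)=g-3$ forces $g\geq 4$ (hence the hypothesis of not being $4$-countably dense), drop condition (\ref{biralpha8}), handle the same-fibre subcase via the dichotomy on whether $|2K_{X''_b}|$ is birational, and optimise $\beta'$ so that the surviving conditions are (\ref{birbeta1}), (\ref{biralpha1}) and (\ref{biralpha6}), with (\ref{biralpha6}) dominating except for the finitely many genera in Table \ref{tavola2ndmap}. The paper additionally records that for $g=73,242,243$ it is (\ref{biralpha1}) that dominates (\ref{biralpha6}), but this is exactly the bookkeeping you defer to, so the arguments coincide.
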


\begin{proof}
Consider  the proof of Theorem \ref{birthreefold}, but with $l=2$ instead of $l \leq 5$. (\ref{biralpha8}) is not needed. Moreover instead of the usual value for $\beta'$, it is better to take a larger value in some cases: $g=8$ $\beta':=\sqrt{82}$, $g=12$ $\beta':=\sqrt{57}$, $g=14$ $\beta':=\sqrt{52}$, $g=22$ $\beta':=\sqrt{43}$, $g=24$ $\beta':=\sqrt{42}$, $g=26$ $\beta':=\sqrt{41}$, $g=29$ $\beta':=\sqrt{40}$, $g=32$ $\beta':=\sqrt{39}$, $g=37$ $\beta':=\sqrt{38}$, $g=43,44$ $\beta':=\sqrt{37}$, $g=53,54$ $\beta':=6$, $g=69, \ldots, 72$ $\beta':=\sqrt{35}$, $g=101, \ldots, 110$ $\beta':=\sqrt{34}$, $g=197, \ldots, 241$ $\beta':=\sqrt{33}$. This time we have that (\ref{biralpha6})$\Rightarrow$ (\ref{biralpha4}) and we are left to consider only conditions (\ref{birbeta1}), (\ref{biralpha1}) and (\ref{biralpha6}). For $g\not=73,242,243$  we have that (\ref{biralpha6}) $\Rightarrow$ (\ref{biralpha1}), while for $g=73,242,243$ (\ref{biralpha1}) $\Rightarrow$ (\ref{biralpha6}). 
\end{proof}

\begin{remark}
The birationality of the fourth pluricanonical map for threefolds of general type has been studied by, among the others, Lee, Dong, M.Chen, Zhang. Actually it is still an open problem when $\phi_{|4K_X|}$ not birational implies that $X$ is birational to an $X''$ as in Corollary \ref{4thmap} (cf. \cite[6.4]{ChenZhang}). 

As for the birationality of the third pluricanonical map, explicit characterizations not depending on the volume are not known (cf.\ \cite[Open problems 6.4]{ChenZhang}).
\end{remark}

\section{Higher dimensional results}

We know that there exists a positive lower bound on the volume of any variety of general type of a given dimension (see, for example, \cite[Theorem 1.2]{Takayama}). If only we knew these lower bounds explicitly then the ideas we exploited for threefolds to find estimates for the non-vanishing of pluricanonical systems or the birationality of pluricanonical maps could be generalized to varieties of any dimension. Unfortunately this is not the case. Anyway, we did explicit calculations in the case of fourfolds, since in \cite{ChenChenII} J. Chen and M. Chen computed a lower bound for the volume for threefolds of general type. However notice that since we do not have the technique of the fibration at our disposal, these estimates are far from being optimal. 

\begin{theorem}  \label{dimensionealta}
Let $X$ be a smooth projective variety of general type and of dimension $d$, such that $\vol(X) > \alpha^d$. Let $\Pi$ be a very general subset of $X$ and, for $i=1, \ldots, d-1$, let $v_i \in \mathbb{Q}^+$ such that $\vol(Z) > v_i$ for every $Z \subset X$  subvariety of dimension $i$ passing through a point $x \in \Pi$ and let $\mu_i:=\frac{i}{\sqrt[i]{v_i}}$. Set $$M:=\left[  \left( \frac{d}{\alpha} +1\right) \cdot (\mu_{d-1}+1) \cdot (\mu_{d-2}+1) \cdot \ldots \cdot (\mu_{1}+1) \right].$$ Then for all $n \geq 1$, for all $m \geq nM$, $h^0((m+1)K_X) \geq n$.

\end{theorem}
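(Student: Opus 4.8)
The plan is to generalize the "cut down the dimension of the log canonical centre" strategy used for threefolds (cases $Y_0, Y_1, Y_{2,a}$ in Theorem \ref{threefold}) to arbitrary dimension $d$, iterating \cite[Theorem 4.1]{HcMcK} exactly $d-1$ times. Since there is no fibration trick available in higher dimension, one never lands in a "small volume" branch; instead, the assumption $\vol(Z) > v_i$ for all subvarieties $Z$ of dimension $i$ through a very general point gives, at each step, the lower bound on the volume needed to produce a fresh singular divisor on the current centre. First I would set $\Pi_0$ to be the intersection of $\Pi$ with the complement of $\mathbb{B}_+(K_X)$ and of all subvarieties not of general type; this is still very general, hence countably dense by Lemma \ref{general set}.

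The core is the following inductive construction. Using $\vol(K_X) > \alpha^d$ and \cite[2.2]{Todorov}, for each $x$ in a countably dense subset of $\Pi_0$ produce $\Delta'_x \sim_{\mathbb{Q}} \lambda_x K_X$ with $\lambda_x < d/\alpha$, singular enough at $x$, and by Lemma \ref{tiebreaking}(\ref{due}),(\ref{tre}) pass to $D_x \sim_{\mathbb{Q}} \lambda_x K_X$ with $LLC(X, D_x, x) = \{V_x\}$, $V_x$ exceptional, of some dimension $e \leq d-1$. Since $x \in \Pi_0$, $V_x$ is of general type and not contained in $\mathbb{B}_+(K_X)$. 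Now resolve $V_x$, take the very general subset from \cite[Theorem 4.1]{HcMcK} intersected with $\Pi_0$, use $\vol(V_x) > v_e$ together with \cite[2.2]{Todorov} to build on the resolution a divisor $\sim_{\mathbb{Q}} \mu_e K_{W_x}$ singular at a preimage point (with $\mu_e = e/\sqrt[e]{v_e} \geq \dim$-normalized bound), cut to a pure lc centre, and push forward via \cite[Theorem 4.1]{HcMcK}: for every $\delta > 0$ this yields a divisor $D'_y \sim_{\mathbb{Q}} ((\lambda_x + 1)(\mu_e + 1) - 1 + \delta) K_X$ with a chosen exceptional lc centre $V'_y \ni y$ of dimension $< e$. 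Crucially, the union over $x$ of these very general subsets is again countably dense in $X$ (same argument as in the $Y_1$ case of Theorem \ref{threefold}): if it were covered by countably many proper closed sets, each fibre-wise-very-general slice would force the original centre into one of them, contradicting countable density one level up. Iterating, after at most $d-1$ steps one reaches, over a countably dense set of points, divisors $\sim_{\mathbb{Q}} \lambda K_X$ with isolated $0$-dimensional lc centres and
$$
\lambda \;<\; \left( \tfrac{d}{\alpha} + 1 \right)(\mu_{d-1}+1)(\mu_{d-2}+1)\cdots(\mu_1+1) - 1 + \delta',
$$
for $\delta'$ as small as we like (absorbing all the intermediate $\delta$'s).

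Once we have $0$-dimensional isolated lc centres over a countably dense set, I would invoke Lemma \ref{tanti divisori} to select, for any $n$, points $x_1, \dots, x_n$ with $x_i \notin \supp(\Delta_{x_j})$ for $i \neq j$, and then Lemma \ref{nonvanishing} with $D = K_X$ and $m = M'$: this gives $h^0(K_X + M' K_X) = h^0((M'+1)K_X) \geq n$ as soon as $M' > [\sum \lambda_i] = [n\lambda]$. Since each $\lambda < M + 1$ (taking $\delta'$ small so the strict inequality $\lambda < \big[(\tfrac d\alpha+1)\prod(\mu_i+1)\big] + 1$ holds, i.e.\ $[\lambda] \leq M$), we get $[n\lambda] \leq nM$ — wait, more carefully: $[\lambda] \le M$ does not immediately give $[n\lambda]\le nM$ unless $\lambda < M+1$, which it is, so $n\lambda < n(M+1)$ hence $[n\lambda] \le nM + (n-1)$; to be safe one uses $\lambda \le M + \{\,\}$ with the fractional slack made $< 1/n$ by choosing $\delta'$ small, giving $n\lambda < nM + 1$, i.e.\ $[n\lambda] \le nM$. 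Thus $m \geq nM \geq [n\lambda] + 1$ suffices, proving $h^0((m+1)K_X) \geq n$ for all $m \geq nM$.

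The main obstacle is bookkeeping the iterated multiplier-ideal / coefficient estimate cleanly: verifying that \cite[Theorem 4.1]{HcMcK} applies at each stage (one needs the current centre to be of general type, to have the right positivity so that $K$ of the centre is comparable to the restriction of $K_X$, and to avoid the augmented base locus — all guaranteed by working inside $\Pi_0$ and by exceptionality), that countable density genuinely propagates through all $d-1$ levels, and that the telescoping product of the $(\mu_i+1)$ factors together with the $(\tfrac d\alpha + 1)$ factor is exactly what appears in $M$, with the $\delta$'s absorbed without spoiling the final rounding. The dimension-reduction itself is routine once the threefold case is understood; the delicate point is purely the arithmetic of the bound and the propagation of the "countably dense" hypothesis.
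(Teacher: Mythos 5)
Your proposal is correct and follows essentially the same route as the paper: produce lc centres from the volume bound via Todorov's multiplicity argument and tie-breaking, iterate Hacon--McKernan's Theorem 4.1 (with exactly the countable-density propagation argument from the $Y_1$ case of Theorem \ref{threefold}) to reach isolated zero-dimensional centres with coefficient bounded by $\left(\frac{d}{\alpha}+1\right)\prod_i(\mu_i+1)-1+\delta q$, and conclude via Lemmas \ref{tanti divisori} and \ref{nonvanishing}. The final rounding is cleaner than your back-and-forth suggests: since $t-1<[t]$ for every real $t$, taking $\delta$ small enough gives each coefficient strictly less than $M$ outright, whence $\left[\sum_i \lambda_i\right]\leq nM-1<m$ directly.
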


\begin{remark}
By \cite[1.4]{HcMcK} or \cite[1.2]{Takayama}, we know that there exists $\eta_i$ such that for every variety $Z$ of dimension $i$ and of general type, then $\vol(Z) \geq \eta_i$. Therefore for every $i=1, \ldots, d-1$, the $v_i$'s exist and are greater than $0$.
\end{remark}

\begin{proof}
As in \cite{Todorov} and as before, to prove the theorem we will essentially produce lc centres and then, using \cite[Theorem 4.1]{HcMcK}, cut their dimensions until they are points; then we can apply Nadel's vanishing theorem to pull back sections from the points to the variety. 

Let $X_0$ be the intersection between $\Pi$ and $X \setminus \mathbb{B}_+(K_X)$. $X_0$ is a very general subset of $X$, hence countably dense. Note that for every $x \in X_0$, every subvariety through $x$ is of general type, since its volume is strictly positive by hypothesis.

Since $\vol(K_X) > \alpha^d$, by \cite[2.2]{Todorov} (cf.\ also \cite[1.1.31]{LazI}), for every $x \in X$ and every $k \gg 0$ there exists a divisor $A_x \in |kK_X|$ with $\mult_x(A_x) > k \alpha$. Let $\Delta'_x:= A_x \frac{\lambda'_x}{k}$, with $\lambda'_x < \frac{d}{\alpha}$, $\lambda'_x \in \mathbb{Q}^+$, but close enough to $\frac{d}{\alpha}$ so that $\mult_x(\Delta'_x) > d$. Note that $\Delta'_x \sim \lambda_x'K_X$. Let $s_x:=lct(X, \Delta'_x, x)$. By \cite[9.3.2 and 9.3.12]{LazII}, $s_x < 1$ . Moreover, by \cite[9.3.16]{LazII}, $s_x \in \mathbb{Q}^+$. Therefore, without loss of generality, we can suppose that $(X, \Delta'_x)$ is lc, not klt in $x$.

By Lemma \ref{tiebreaking}, \ref{due}., for every $x \in X_0$ there exists an effective $\mathbb{Q}$-divisor $D_x \sim \lambda_x K_X$, with $\lambda_x < \frac{d}{\alpha}, \lambda_x \in \mathbb{Q}^+$, such that $(X, D_x)$ is lc, not klt in $x$ and $LLC(X, D_x, x)=\{V_x\}$, where $V_x$ is the unique minimal element of $LLC(X, \Delta'_x, x)$. Moreover we can also assume that $V_x$ is an exceptional lc centre.

For every $0 \leq i \leq d-1$, set $$Y_i:=\{x \in X_0 \textrm{ s.t. } \dim(V_x)=i\}.$$  
Since $X_0$ is countably dense then at least one between the $Y_i$'s is countably dense.  Moreover we can assume that $Y_{d-1}$ is countably dense - in fact, numerically, this is the ``worst'' possible scenario, as it will be clear further on in the proof. 


Now we apply \cite[Theorem 4.1]{HcMcK}: for every $x \in Y_{d-1}$ consider $V_x$ and a resolution $f_x: W_x \rightarrow V_x$. As we have already seen, $V_x$ is an exceptional lc centre of $(X, D_x)$. Since $x \in X_0$, $V_x$, and hence $W_x$, are of general type and $V_x$ is not contained in the augmented base locus of $K_X$. Moreover $\vol(W_x) > v_{d-1}$ by hypothesis, since the volume is a birational invariant. Let $U_x $ be the very general subset of $V_x$ defined as in \cite[Theorem 4.1]{HcMcK}.  Set $U'_x:=U_x \cap X_0$. $U'_x$ is still a very general and non-empty subset of $V_x$. Moreover $\vol(\mu_{d-1} K_{W_x}) > (d-1)^{d-1}$. 
For every $y \in U'_x$ let us consider $y' \in f_x^{-1}(y) \subset W_x$. Since $y'$ is a smooth point, by \cite[1.1.31]{LazI} and \cite[ 9.3.2]{LazII}, there exists $\Theta_{y'} \sim \mu_{d-1} K_{W_x}$ such that $(W_x, \Theta_{y'})$ is not klt in $y'$. As before, since $lct(W_x, \Theta_{y'}, y') < 1$, we can suppose that $(W_x, \Theta_{y'})$ is lc, not klt in $y'$ and $\Theta_{y'} \sim \mu_{y'} K_{W_x}$ with $\mu_{y'} \in \mathbb{Q}^+$ and $\mu_{y'} < \mu_{d-1}$. Therefore there exists a pure lc centre $W'_{y'} \in LLC(W_x, \Theta_{y'},y')$. Set $V'_y:= f_x(W'_{y'}) \ni y $. By \cite[Theorem 4.1]{HcMcK}, for every $\delta \in \mathbb{Q}^+$ there exists a $\mathbb{Q}$-divisor $D'_y$ such that $V'_y$ is an exceptional lc centre and such that $D'_y \sim ((\lambda_x+1)(\mu_{y'}+1)-1+\delta)K_X$.  At the end we are in the following situation:  $\cup_{x \in Y_{d-1}}U'_x$ is countably dense in $X$ and for every $z \in \cup_{x \in Y_{d-1}}U'_x$ there exists a $\mathbb{Q}$-divisor $D'_z$ such that $LLC(X,D'_z,z)=\{V'_z\}$ with $V'_z$ exceptional lc centre, $\dim(V'_z)< d-1$ and $D'_z \sim_{\mathbb{Q}} ((\lambda_z+1)(\mu_z+1)-1+\delta)K_X$ with $\lambda_z < \frac{d}{\alpha}$ and $\mu_z < \mu_{d-1}$.

We can now apply \cite[Theorem 4.1]{HcMcK} again and again and conclude that there exists a countably dense set $\Gamma \subseteq X$ such that for every $x \in \Gamma$ there exists a $\mathbb{Q}$-divisor $B_x$ such that $LLC(X,B_x,x)=\{x\}$ and $B_x \sim_{\mathbb{Q}} \gamma K_X$ with $$\gamma < \left( \frac{d}{\alpha}+1\right) \cdot (\mu_{d-1}+1) \cdot (\mu_{d-2}+1) \cdot \ldots \cdot (\mu_{1}+1)-1+\delta q,$$
where $q$ is a positive rational number.

Taking $\delta$ sufficiently small, we can conclude that $\gamma < M$, therefore, by Lemma \ref{tanti divisori} and Lemma \ref{nonvanishing}, for all $n \geq 1$, for all $m \geq nM$, $h^0((m+1)K_X) \geq n$.
\end{proof}

\begin{remark} \label{dimensionealtarmk}
In the above proof it is clear that $$M \geq \left[ (\mu_{d-1}+1) \cdot (\mu_{d-2}+1) \cdot \ldots \cdot (\mu_{1}+1) \right]$$ and that ``$=$'' holds as soon as $$  \left( \frac{d}{\alpha}+1\right) \cdot (\mu_{d-1}+1)  \cdot \ldots \cdot (\mu_{1}+1)- \left[ (\mu_{d-1}+1) \cdot \ldots \cdot (\mu_{1}+1) \right]<1$$ i.e. $$\frac{d}{\alpha} < \frac{ 1-\{ (\mu_{d-1}+1)  \cdot \ldots \cdot (\mu_{1}+1) \}}{(\mu_{d-1}+1) \cdot \ldots \cdot (\mu_{1}+1)}$$ i.e. $$\alpha > \frac{d(\mu_{d-1}+1) \cdot \ldots \cdot (\mu_{1}+1)}{1-\{(\mu_{d-1}+1)  \cdot \ldots \cdot (\mu_{1}+1)\}}. $$
\end{remark}
\text{   }\\

\begin{corollary} \label{explicitnonvanishing}
Let $X$ be a smooth, not $g$-countably dense, projective variety of general type of dimension $d$ and such that $\vol(X) > \alpha^d$. If $d=3$, if $$ \alpha > \frac{9 \frac{2g-1}{2g-2}}{1-\left\{3 \frac{2g-1}{2g-2} \right\}}$$ then we have that $h^0\left(\left(1+m\right)K_X\right)\geq n $ for all $n \geq 1$ and all $m \geq \left[3\frac{2g-1}{2g-2}\right]n$. If $d=4$, $$ \alpha > \frac{12(3\sqrt[3]{2660}+1) \frac{2g-1}{2g-2}}{1-\left\{  3(3\sqrt[3]{2660}+1) \frac{2g-1}{2g-2}\right\}}$$ then $h^0\left(X, \left(1+m \right)K_X\right) \geq n$ for all $n \geq 1$ and all $m \geq  \left[3(3\sqrt[3]{2660}+1) \frac{2g-1}{2g-2}\right]n $. In general: if $d=3$, $\alpha > 27$ then $h^0(X, (1+m)K_X) \geq n$ for all $n \geq 1$ and all $m \geq 4n$; if $d=4$, $\alpha \geq 1709$ then $h^0(X, (1+m)K_X) \geq n$ for all $n \geq 1$ and all $m \geq 191n$.

\end{corollary}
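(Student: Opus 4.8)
This corollary is a direct, if computational, application of Theorem~\ref{dimensionealta} and its refinement Remark~\ref{dimensionealtarmk}. The plan is to feed the theorem the best available lower bounds $v_1,\dots,v_{d-1}$ for the volumes of the dimension-$i$ subvarieties through a very general point of $X$, and then to read off the integer $M$ and the threshold on $\alpha$.

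First I would fix the very general set $\Pi$ appearing in Theorem~\ref{dimensionealta}. Since $X$ is not $g$-countably dense, Remark~\ref{notcountablydense} supplies a very general $\Lambda_1\subseteq X$ through whose points every curve has geometric genus $\geq g$, hence volume $\geq 2g-2$; Remark~\ref{generaltype} supplies a very general $\Lambda_2$ through whose points every subvariety is of general type. I take $\Pi:=\Lambda_1\cap\Lambda_2\setminus\mathbb{B}_+(K_X)$. Then, for any small $\epsilon>0$, one may put $v_1=2g-2-\epsilon$, so that $\mu_1=1/(2g-2-\epsilon)$ is arbitrarily close to $1/(2g-2)$; since every surface of general type has volume $\geq1$ (see \cite{BPVdV}) one may put $v_2=1-\epsilon$, so that $\mu_2=2/\sqrt{1-\epsilon}$ is close to $2$; and, when $d=4$, since every threefold of general type has volume $\geq1/2660$ by the explicit lower bound of J.~Chen and M.~Chen in \cite{ChenChenII}, one may put $v_3=1/2660-\epsilon$, so that $\mu_3=3/\sqrt[3]{1/2660-\epsilon}$ is close to $3\sqrt[3]{2660}$.

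Substituting these values into Remark~\ref{dimensionealtarmk} I would then conclude. For $d=3$ the product $(\mu_2+1)(\mu_1+1)$ tends to $3\frac{2g-1}{2g-2}$ as $\epsilon\to0$, so $M=\big[3\frac{2g-1}{2g-2}\big]$ and the hypothesis of the remark becomes $\alpha>\frac{9\frac{2g-1}{2g-2}}{1-\{3\frac{2g-1}{2g-2}\}}$, the first claimed bound; Theorem~\ref{dimensionealta} then gives $h^0((1+m)K_X)\geq n$ for all $n\geq1$ and all $m\geq Mn$. For $d=4$ the product $(\mu_3+1)(\mu_2+1)(\mu_1+1)$ tends to $3(3\sqrt[3]{2660}+1)\frac{2g-1}{2g-2}$, so $M=\big[3(3\sqrt[3]{2660}+1)\frac{2g-1}{2g-2}\big]$ and the threshold is $\alpha>\frac{12(3\sqrt[3]{2660}+1)\frac{2g-1}{2g-2}}{1-\{3(3\sqrt[3]{2660}+1)\frac{2g-1}{2g-2}\}}$, exactly as stated. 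For the two unconditional statements I would specialize to $g=2$, the worst case since $\frac{2g-1}{2g-2}=1+\frac{1}{2g-2}$ decreases in $g$ and $g\geq2$ always holds by Remark~\ref{generaltype}: then $\mu_1\to\frac{1}{2}$, the $d=3$ product tends to $\frac{9}{2}$ giving $M=4$ and, from $\big(\frac{3}{\alpha}+1\big)\frac{9}{2}<5$, the bound $\alpha>27$; the $d=4$ product tends to $\frac{9}{2}(3\sqrt[3]{2660}+1)$, giving $M=191$ and the bound $\alpha\geq1709$. Since $M$ is non-increasing in $g$, the weaker conclusions $m\geq4n$, respectively $m\geq191n$, persist for every such $X$.

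The one genuinely delicate point — and the place where care rather than a black-box citation is needed — is the $\epsilon$/$\delta$ bookkeeping. One must verify that the floor functions $\big[3\frac{2g-1}{2g-2}\big]$ and $\big[3(3\sqrt[3]{2660}+1)\frac{2g-1}{2g-2}\big]$ do not jump when the $v_i$ are replaced by the nearby rationals actually used, and that the error term $\delta q$ in the estimate $\gamma<\big(\frac{d}{\alpha}+1\big)(\mu_{d-1}+1)\cdots(\mu_1+1)-1+\delta q$ from the proof of Theorem~\ref{dimensionealta} can be absorbed; it is exactly this interplay that pins down the somewhat large numerical constants $27$ and $1709$ appearing in the unconditional statements.
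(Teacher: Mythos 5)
Your proposal is correct and follows the same route as the paper: both apply Theorem \ref{dimensionealta} together with Remark \ref{dimensionealtarmk}, taking $v_1=2g-2-\epsilon$ (from the non-$g$-countably-dense hypothesis), $v_2=1-\epsilon$ (surfaces of general type have integral volume $\geq 1$), and $v_3=\tfrac{1}{2660}-\epsilon$ (Chen--Chen), then letting $\epsilon\to 0$ and specializing to $g=2$ for the unconditional bounds. Your closing caveat about verifying that the floors do not jump as $\epsilon,\delta\to 0$ is exactly the point the paper absorbs into its ``$+o(1)$'' notation.
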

\begin{proof}
For every $X$ and for every $0 < \epsilon \ll 1$ we can take $v_1=2g-2-\epsilon$ (by Remark  \ref{notcountablydense}), $v_2=1-\epsilon$ (the minimal model of a surface is nonsingular, hence the volume is an integer) and, by \cite{ChenChenII}, $v_3=\frac{1}{2660}-\epsilon$. Therefore $\mu_1=\frac{1}{2g-2}+o(1)$, $\mu_2=2+o(1)$ and $\mu_3=3\sqrt[3]{2660} +o(1)$ (with $o(1)>0$, $\lim_{\epsilon \rightarrow 0} o(1)=0$). 

If $X$ is a threefold we have that $ (\mu_{2}+1) \cdot (\mu_{1}+1) = 3\frac{2g-1}{2g-2}+o(1)$ therefore, by \ref{dimensionealta} and \ref{dimensionealtarmk}, if $$ \alpha > \frac{9 \frac{2g-1}{2g-2}}{1-\left\{3 \frac{2g-1}{2g-2} \right\}}$$ then $h^0\left(\left(1+m\right)K_X\right)\geq n$ for every $n\geq1$ and every $m \geq \left[3\frac{2g-1}{2g-2}\right]n$. In general, taking $g=2$ by Remark \ref{generaltype}, we can conclude that if $\alpha > 27$ then $h^0(X, (1+m)K_X) \geq n$ for all $n \geq 1$ and all $m \geq 4n$. 

If $X$ is a fourfold we have that $ (\mu_3+1) \cdot (\mu_{2}+1) \cdot (\mu_{1}+1) = 3(3\sqrt[3]{2660}+1)\frac{2g-1}{2g-2}+o(1)$, therefore we can conclude that if $$ \alpha > \frac{12(3\sqrt[3]{2660}+1) \frac{2g-1}{2g-2}}{1-\left\{  3(3\sqrt[3]{2660}+1) \frac{2g-1}{2g-2}\right\}}$$ then $h^0\left(X, \left(1+ m \right)K_X\right) \geq n$ for all $n \geq 1$ and all $m \geq \left[3(3\sqrt[3]{2660}+1) \frac{2g-1}{2g-2}\right]n$. In general, taking $g=2$, we can conclude that if $\alpha \geq 1709$ then $h^0(X, (1+m)K_X) \geq n$ for all $n \geq 1$ and all $m\geq 191n$.

\end{proof}

For the birationality of pluricanonical systems, using - as for the $3$-fold case - Takayama's result instead of Hacon--McKernan's, under the same notation and hypotheses of \ref{dimensionealta}, we can state that

\begin{theorem} \label{birationalitydimalta}
Let $X$ be a smooth projective variety of general type and of dimension $d$, such that $\vol(X) > \alpha^d$. Let $\Pi$ be a very general subset of $X$ and, for $i=1, \ldots, d-1$, let $v_i \in \mathbb{Q}^+$ such that $\vol(Z) > v_i$ for every $Z \subset X$  subvariety of dimension $i$ passing through a point $x \in \Pi$ and let $\mu_i:=\frac{i}{\sqrt[i]{v_i}}$. 
Setting, for every $i=1, \ldots, d-1$, $r_i:=\sqrt[i]{2} \mu_i$,  $$\overline{s}:= 2   \prod_{i=1}^{d-1} (1+r_i) -2,$$ $$\overline{t}:=  \sqrt[d]{2} d \prod_{i=1}^{d-1} (1+r_i),$$ we have that if $l \geq \left[\overline{s}+\frac{\overline{t}}{\alpha}\right]+2$ then the linear system $|l K_X|$ gives a birational map. 
\end{theorem}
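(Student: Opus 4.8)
The plan is to mimic the strategy of the proof of Theorem~\ref{birthreefold}, but now carried out in arbitrary dimension by iterating Takayama's dimension-reduction procedure rather than Hacon--McKernan's. First I would pass to the birational modification $\mu\colon X' \to X$ supplied by \cite[Theorem 3.1]{Takayama}, writing $\mu^*(K_X) \sim_{\mathbb{Q}} A+E$ with $A=A_\epsilon$ ample and $E=E_\epsilon$ effective satisfying conditions (1)--(3) there, and work on $X'$. By generic smoothness it suffices to separate two very general points $x,y \in X'$; I would take $x,y$ inside $\Pi$ (pulled back to $X'$), so that every subvariety through $x$ or $y$ of dimension $i$ has volume $>v_i$.

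The heart of the argument is the inductive construction of a singular divisor. Starting from the volume hypothesis $\vol(X')>\alpha^d$ and \cite[2.2]{Todorov} (or \cite[1.1.31]{LazI}), I would produce, for every $k \gg 0$, a divisor in $|kK_{X'}|$ of multiplicity $>k\alpha$ at $x$ (and similarly at $y$), giving after rescaling by $\approx d/\alpha$ and taking the log canonical threshold an effective $\mathbb{Q}$-divisor $D_1 \sim_{\mathbb{Q}} a_1 A$ with $a_1 < \frac{\sqrt[d]{2}\,d}{\alpha(1-\epsilon)}$ (the factor $\sqrt[d]{2}$ comes from needing to control two points simultaneously, exactly as in \cite[Lemma 5.4]{Takayama}) that is lc but not klt at one of the two points and whose non-klt locus has an lc centre $V$ through that point. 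Then I would run the inductive step of \cite[Proposition 5.3]{Takayama}: restricting to a resolution of $V$ of dimension $i<d$, where $\vol>v_i$, so $\vol(\mu_i K)>i^i$, and since we are again separating two points we pick up a factor $\sqrt[i]{2}$, producing a divisor $\sim_{\mathbb{Q}} \mu_i' K$ with $\mu_i' < r_i = \sqrt[i]{2}\mu_i$ that cuts the centre down in dimension; pulling this back via \cite[Theorem 4.1]{HcMcK}-type formulas multiplies the coefficient by $(1+r_i)$ and adds $-1$ plus an error term proportional to $\epsilon$. Iterating over $i=d-1,d-2,\dots,1$ (dominated, as in Theorem~\ref{dimensionealta}, by the worst-case scenario where each successive lc centre drops dimension by exactly one) yields, for $\epsilon$ small, an effective $\mathbb{Q}$-divisor $D \sim_{\mathbb{Q}} aA$ with $x$ or $y$ an isolated point of $Z(\mathcal{J}(X',D))$ and
\[
a < 2\prod_{i=1}^{d-1}(1+r_i) - 2 + \epsilon\,(\text{bounded}) \;=\; \overline{s} + \epsilon\,(\text{bounded}),
\]
where the leading constant $2\prod(1+r_i)$ arises because each of the $d-1$ inductive steps contributes a factor $(1+r_i)$ and the initial and final "$2$"s package the two-point bookkeeping together with the $-2$ coming from the $d-1$ subtractions of $1$ (plus the one from passing to $K_{X'}$). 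Comparing $a_1 \sim \sqrt[d]{2}d/\alpha$ inside the product also produces the $\overline{t}/\alpha$ term: the full bound is $a < \overline{s} + \overline{t}/\alpha + \epsilon\,(\cdots)$.

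Finally, using $K_{X'} \sim_{\mathbb{Z}} \mu^*(K_X) + \mathrm{Exc}(\mu) \sim_{\mathbb{Q}} A+E+\mathrm{Exc}(\mu)$ with $E+\mathrm{Exc}(\mu)$ effective, I would replace $D$ by $D+(l-1)(E+\mathrm{Exc}(\mu))$ and apply Nadel's vanishing theorem (exactly as in the proof of Lemma~\ref{nonvanishing} and of Theorem~\ref{birthreefold}): the restriction of $|lK_{X'}|$ to the isolated point is surjective once $l-1 > a$, i.e.\ as soon as $l \geq [\overline{s}+\overline{t}/\alpha]+2$, and this separates $x$ from $y$. Since $x,y$ were very general, $|lK_{X'}|$ — hence $|lK_X|$, the volume and plurigenera being birational invariants — gives a birational map. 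The main obstacle is a clean bookkeeping of the constants: one must verify that each inductive application of Takayama's lemma really contributes a multiplicative $(1+r_i)$ with $r_i=\sqrt[i]{2}\mu_i$, that the $\epsilon$-error terms stay bounded uniformly and can be absorbed by choosing $\epsilon$ small, and that the worst case is genuinely the one where the lc-centre dimension decreases by exactly one at each stage — the verification that no other sequence of dimension drops produces a larger coefficient is the delicate combinatorial point, handled as in Theorem~\ref{dimensionealta}.
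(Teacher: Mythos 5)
Your proposal is correct and follows essentially the same route as the paper: reduce to $X'$ via Takayama's approximate Zariski decomposition, invoke \cite[Proposition 5.3]{Takayama} (whose recursion from Notation 5.2, $s_{i+1}=(1+r_{d-i})s_i+2r_{d-i}$, $t_{i+1}=(1+r_{d-i})t_i$, telescopes exactly to $\overline{s}$ and $\overline{t}$ as $\epsilon\to 0$), and conclude by adding $(l-1)(E+\mathrm{Exc}(\mu))$ and applying Nadel's vanishing. The bookkeeping worries you raise at the end (uniformity of the $\epsilon$-errors, worst-case dimension drops) are already packaged inside Takayama's proposition, so the paper does not re-verify them either.
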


\begin{proof}
As in the proof of Theorem \ref{birthreefold}, we can reduce ourselves to the following situation: for every $ 0 < \epsilon < 1$ there exists a smooth projective variety $X'$ and a birational morphism $\pi: X' \rightarrow X$ and a decomposition $\mu^*(K_X) \sim_{\mathbb{Q}} A+E$ where $A=A_{\epsilon}$ is an ample $\mathbb{Q}$-divisor and $E=E_\epsilon$ is an effective $\mathbb{Q}$-divisor. As in Theorem \ref{birthreefold} we will argue on $X'$. By \cite[Proposition 5.3]{Takayama}, we know that given two very general points $x,y \in X'$ there exists an effective $\mathbb{Q}$-divisor $D$ on $X'$ and a positive constant $a$ with $D \sim_\mathbb{Q} aA$ such that  $x,y \in Z(\mathcal{J}(X', D) )$ with $\dim Z(\mathcal{J}(X', D) )=0$ around $x$ or $y$, that is $x$ or $y$ is an isolated point of $Z(\mathcal{J}(X', D) )$. 
Besides, by the same proposition, we also know that  $a < s+t/ \sqrt[d]{\vol(X)} \leq s+t / \alpha$ where $s, t$ are non-negative constants defined as follows.  Let $s_i, s'_i, t_i$ ($i=1, \ldots, d$) be non-negative constants determined inductively as (cf.\ \cite[Notation 5.2]{Takayama}): $s_1= 0$, $t_1=\sqrt[d]{2} d /(1-\epsilon)$, $s'_i =s_i+\epsilon$, $$s_{i+1}= \left( 1 + \sqrt[d-i]{2} \frac{\mu_{d-i}}{1-\epsilon}  \right) s'_i + 2 \sqrt[d-i]{2} \frac{\mu_{d-i}}{1-\epsilon},$$ $$t_{i+1}=\left( 1 + \sqrt[d-i]{2} \frac{\mu_{d-i}}{1-\epsilon}  \right) t_i.$$
Finally, set $s:=s_d, t:=t_d$.

As in the proof of Theorem \ref{birthreefold}, we can say that, given $l \in \mathbb{N}$, $|l K_{X'}|$ separates two very general points in $X'$ as soon as $l \geq [a]+2$.

It can be easily seen that $s=\overline{s}+o(1)$ and $t=\overline{t}+o(1)$, with $o(1) > 0$ and such that $\lim_{\epsilon \rightarrow 0}o(1)=0$. Note that $\overline{s}$ and $\overline{t}$ do not depend on $\epsilon$.

Since $a < s + t/\alpha$ then $a < \overline{s}+\overline{t}/\alpha + o(1)$, therefore, taking $\epsilon$ sufficiently small, $[a] \leq \left[ \overline{s}+\overline{t}/\alpha\right]$ and thus we can conclude.

\end{proof}

\begin{remark}
In the above proof it is clear that $$\left[ \overline{s}+ \frac{\overline{t}}{\alpha} \right] \geq \left[ \overline{s}\right]$$ and that ``$=$'' holds as soon as  $$ \frac{\overline{t}}{\alpha} <1-\{\overline{s}\}$$ (where $\{\cdot\}$ is the fractional part), that is $$\alpha > \frac{\overline{t}}{1-\{\overline{s}\}}.$$
\end{remark}

We can now do explicit calculations in the case of fourfolds, using the same notation and estimates as in \ref{explicitnonvanishing}.

\begin{corollary} \label{explicitbirationality}
Let $X$ be a smooth, not $g$-countably dense, projective fourfold of general type such that $\vol(X) > \alpha^4$. If $$\alpha >  \frac{4 \sqrt[4]{2} \left( \frac{g}{g-1} \right) (1+2\sqrt{2})(1+3 \sqrt[3]{5320})}{1-\left\{  2 \left( \frac{g}{g-1}\right)  (1+2 \sqrt{2})(1+ 3 \sqrt[3]{5320})     \right\}} $$ we have that the linear system $|l K_X|$ gives a birational map for every $$l \geq \left[  2 \left( \frac{g}{g-1}\right)  (1+2 \sqrt{2})(1+3 \sqrt[3]{5320})  \right].$$ In general, if $\alpha \geq 2816$ then $|l K_X|$ gives a birational map for every $l \geq 817$.

\end{corollary}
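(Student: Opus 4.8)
The plan is to apply Theorem \ref{birationalitydimalta} with $d=4$, feeding in the same explicit volume bounds already used in Corollary \ref{explicitnonvanishing}. Since $X$ is of general type and not $g$-countably dense, for $0<\epsilon\ll 1$ we may take $v_1=2g-2-\epsilon$ (by Remark \ref{notcountablydense}, every curve through a very general point has geometric genus $\geq g$), $v_2=1-\epsilon$ (the minimal model of a surface of general type is nonsingular, so its volume is an integer $\geq 1$) and $v_3=\tfrac{1}{2660}-\epsilon$, the last one by the lower bound of J. Chen and M. Chen from \cite{ChenChenII} applied to a resolution of a $3$-dimensional subvariety through a very general point (such a subvariety is automatically of general type, and the volume is a birational invariant).

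With these choices $r_i=\sqrt[i]{2}\,\mu_i=i\sqrt[i]{2/v_i}$ gives $r_1=\tfrac{1}{g-1}+o(1)$, $r_2=2\sqrt2+o(1)$, $r_3=3\sqrt[3]{5320}+o(1)$, where $o(1)\to 0^+$ as $\epsilon\to0$; hence, setting $P:=\tfrac{g}{g-1}(1+2\sqrt2)(1+3\sqrt[3]{5320})$, one has $\prod_{i=1}^{3}(1+r_i)=P+o(1)$, so $\overline{s}=2P-2+o(1)$ and $\overline{t}=4\sqrt[4]{2}\,P+o(1)$. By Theorem \ref{birationalitydimalta}, $|lK_X|$ is birational as soon as $l\geq[\overline{s}+\overline{t}/\alpha]+2$, and by the Remark following that theorem $[\overline{s}+\overline{t}/\alpha]=[\overline{s}]$ once $\alpha>\overline{t}/(1-\{\overline{s}\})$. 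Since $\overline{s}$ and $2P$ differ by the integer $2$, we have $\{\overline{s}\}=\{2P\}$ and $[\overline{s}]+2=[2P]$, which in the limit $\epsilon\to0$ yields precisely the stated conclusion: $|lK_X|$ is birational for every $l\geq[2P]$ provided $\alpha>\dfrac{4\sqrt[4]{2}\,P}{1-\{2P\}}$.

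The point that needs care is the passage to the limit. One checks that $2P$ is not an integer, so that for all sufficiently small $\epsilon$ the quantities $[\overline{s}(\epsilon)]$ and $\{\overline{s}(\epsilon)\}$ are equal to $[2P]-2$ and to a number decreasing monotonically to $\{2P\}$; since likewise $\overline{t}(\epsilon)$ decreases monotonically to $4\sqrt[4]{2}P$, a strict inequality $\alpha>4\sqrt[4]{2}P/(1-\{2P\})$ forces $\alpha>\overline{t}(\epsilon)/(1-\{\overline{s}(\epsilon)\})$ for $\epsilon$ small, and then Theorem \ref{birationalitydimalta} applies with that $\epsilon$. For the unconditional statement one specialises to $g=2$, which is legitimate by Remark \ref{generaltype}, so that $P=2(1+2\sqrt2)(1+3\sqrt[3]{5320})$; a direct numerical evaluation gives $2P\approx 817.31$, whence $[2P]=817$, and $4\sqrt[4]{2}P/(1-\{2P\})\approx 2814.4<2816$, giving the clean bounds $l\geq 817$, $\alpha\geq 2816$. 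Beyond matching the constants $r_i,\overline{s},\overline{t}$ correctly to Takayama's recursion of \cite{Takayama}, the only real work is this numerical bookkeeping — verifying $2P\notin\mathbb Z$ and evaluating $\sqrt2$, $\sqrt[3]{5320}$, $\sqrt[4]{2}$ with enough precision to pin down the integer $817$ and the threshold $2816$.
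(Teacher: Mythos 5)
Your proposal is correct and follows exactly the paper's own route: plug the volume bounds $v_1=2g-2-\epsilon$, $v_2=1-\epsilon$, $v_3=\tfrac{1}{2660}-\epsilon$ into Theorem \ref{birationalitydimalta} with $d=4$, identify $\overline{s}=2P-2+o(1)$ and $\overline{t}=4\sqrt[4]{2}P+o(1)$, and invoke the remark following that theorem to reduce to $l\geq[2P]$ under the stated bound on $\alpha$, then specialize to $g=2$. Your extra care about the monotone limit in $\epsilon$ and the numerical check $2P\approx 817.3$, $\overline{t}/(1-\{2P\})\approx 2815<2816$ only makes explicit what the paper leaves implicit.
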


\begin{proof}
For every $X$ and every $0 < \epsilon \ll 1$, as in \ref{explicitnonvanishing} we can take $r_1=\frac{1}{g-1} + o(1)$, $r_2=2 \sqrt[2] {2}+ o(1)$, $r_3=3 \sqrt[3]{5320}+o(1)$. Therefore $$\overline{s}= 2 \left( \frac{g}{g-1}\right)  (1+2 \sqrt{2})(1+ 3 \sqrt[3]{5320})-2 + o(1)$$ and $$\overline{t}= 4 \sqrt[4]{2} \left( \frac{g}{g-1} \right) (1+2\sqrt{2})(1+3 \sqrt[3]{5320})+o(1).$$ Hence, by \ref{birationalitydimalta} and its remark, if $$ \alpha >  \frac{4 \sqrt[4]{2} \left( \frac{g}{g-1} \right) (1+2\sqrt{2})(1+3 \sqrt[3]{5320})}{1-\left\{  2 \left( \frac{g}{g-1}\right)  (1+2 \sqrt{2})(1+ 3 \sqrt[3]{5320})     \right\}} $$ then $|l K_X|$ gives a birational map for every $$l \geq \left[  2 \left( \frac{g}{g-1}\right)  (1+2 \sqrt{2})(1+3 \sqrt[3]{5320})  \right].$$ 

In general, taking $g=2$, we can conclude that if $\alpha \geq 2816$ then $|l K_X|$ gives a birational map for every $l \geq 817$.
\end{proof}

Going back over the proof of Theorem \ref{threefold} one realizes that even in the case of varieties of general type of dimension $d$ strictly greater than $3$ we can reach the dichotomy ``non-vanishing of pluricanonical system'' VS ``fibre space over a curve with fibres of small volume''. Unfortunately when $d > 3$ we are not able to get new information from the fibration. Anyway we can state the following theorem, whose proof is obtained merging together the proofs of Theorem \ref{threefold} and Theorem \ref{dimensionealta}. Note that since $i \leq d-2$ then this theorem can be made explicit also in the case of fivefolds.

\begin{theorem}  \label{fivefoldnonvanishing}
Let $X$ be a smooth projective variety of general type of dimension $d$ and such that $\vol(X) > \alpha^d$. Let $\Pi$ be a very general subset of $X$ and, for $i=1, \ldots, d-2$, let $v_i \in \mathbb{Q}^+$ such that $\vol(Z) > v_i$ for every $Z \subset X$  subvariety of dimension $i$ passing through a point $x \in \Pi$. Let $\mu_i:=\frac{i}{\sqrt[i]{v_i}}$ and $R:=\prod_{i=1}^{d-2} (\mu_i+1)$. Let $l$ be a positive integer, $l > R$. Let $\beta_1:=\frac{(d-1)R}{l-R}$, $\beta_2:=\frac{(d-1)(l+R)}{l-R}$. For all $\overline{\beta} > \beta_1$, setting $\tilde{\beta}:=\min\{\overline{\beta}, \beta_2\}$, if $$\alpha > \frac{d(1+(d-1)/ \tilde{\beta})R}{l-(1+(d-1)/\tilde{ \beta})R}$$ then either $h^0(lK_X) \geq 1$ (and for all $n \in \mathbb{N}^+$, $h^0(mK_X) \geq n$ for all $m \geq n(l-1)+1$) or $X$ is birational to a fibre space $X'$, with $f: X' \rightarrow B$, where $B$ is a curve, such that the volume of the general fibre is $\leq {\overline{\beta}}^{d-1}$.
\end{theorem}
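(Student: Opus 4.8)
The plan is to fuse the argument of Theorem \ref{dimensionealta} with the ``fibration dichotomy'' used in Theorem \ref{threefold}. As in Theorem \ref{dimensionealta}, I would first replace $\Pi$ by the countably dense subset $X_0:=\Pi\cap\big(X\setminus\mathbb{B}_+(K_X)\big)$ (further shrunk so that every subvariety through a point of $X_0$ is of general type), then use $\vol(X)>\alpha^d$ together with \cite[2.2]{Todorov}, the usual log canonical threshold reduction and Lemma \ref{tiebreaking}, \ref{due}., to attach to each $x\in X_0$ an effective $\mathbb{Q}$-divisor $D_x\sim_{\mathbb{Q}}\lambda_xK_X$ with $\lambda_x<d/\alpha$ such that $(X,D_x)$ is lc but not klt at $x$ and $LLC(X,D_x,x)=\{V_x\}$ with $V_x$ an exceptional lc centre, and finally stratify $X_0$ by $j:=\dim V_x$. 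One stratum $Y_j$ is countably dense. If $j\le d-2$, I would simply cut $V_x$ down to a point by iterated use of \cite[Theorem 4.1]{HcMcK} and the volume bounds $v_{d-2},\dots,v_1$, exactly as in Theorem \ref{dimensionealta}: the point then carries a divisor $\sim_{\mathbb{Q}}\gamma K_X$ with $\gamma<(d/\alpha+1)R-1+\delta q$, and Lemma \ref{tanti divisori} together with Lemma \ref{nonvanishing} gives the first alternative under a condition on $\alpha$ weaker than the stated one (namely $\alpha>\frac{dR}{l-R}$, which makes sense since $l>R$). So the substance is the case $Y_{d-1}$ countably dense.

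Fix $\overline\beta>\beta_1$ and split $Y_{d-1}$ into $Y_a:=\{x:\vol(V_x)>\overline\beta^{\,d-1}\}$ and $Y_b:=\{x:\vol(V_x)\le\overline\beta^{\,d-1}\}$; one of the two is countably dense. If $Y_a$ is countably dense, then on a resolution $W_x\to V_x$ one has $\vol\big(\tfrac{d-1}{\overline\beta}K_{W_x}\big)>(d-1)^{d-1}$, so (producing a non-klt divisor there, reducing to an lc one, extracting a pure lc centre, and applying \cite[Theorem 4.1]{HcMcK}) I obtain an exceptional lc centre of $(X,D'_y)$ of dimension $\le d-2$ with $D'_y\sim_{\mathbb{Q}}\big((\lambda_x+1)(\tfrac{d-1}{\overline\beta}+1)-1+\delta\big)K_X$; continuing to cut through dimensions $d-2,\dots,1$ with the $v_i$, and preserving countable density of the intermediate unions by the ``$\bigcup U'_x$ is countably dense'' argument of Theorems \ref{threefold} and \ref{dimensionealta}, I reach a countably dense set of points each carrying a divisor $\sim_{\mathbb{Q}}\gamma K_X$ with $\gamma<\big(\tfrac{d}{\alpha}+1\big)\big(\tfrac{d-1}{\overline\beta}+1\big)R-1+\delta q$. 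By Lemma \ref{tanti divisori} and Lemma \ref{nonvanishing} the first alternative ($h^0(lK_X)\ge1$, and $h^0(mK_X)\ge n$ for $m\ge n(l-1)+1$) follows as soon as $\gamma<l-1$, i.e. as soon as $\alpha>\dfrac{d\,(1+(d-1)/\overline\beta)\,R}{\,l-(1+(d-1)/\overline\beta)\,R\,}$ (well posed because $\overline\beta>\beta_1$); call this bound $(\mathrm{I})$.

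If instead $Y_b$ is countably dense, I would appeal to \cite[Lemma 3.2]{McK} (and \cite[Lemma 3.2]{Todorov}, \cite[Lemma 3.3]{Todorov}) to obtain a diagram $X'\xrightarrow{\ \pi\ }X$, $X'\xrightarrow{\ f\ }B$ with $B$ a curve, $f$ of connected fibres, $\pi$ generically finite and dominant, and the image under $\pi$ of a general fibre of $f$ equal to $V_x$, a $(d-1)$-dimensional subvariety through the general point with $\vol(V_x)\le\overline\beta^{\,d-1}$. If $\pi$ is birational, then $X$ is birational to the fibre space $f\colon X'\to B$, whose general fibre has volume $\le\overline\beta^{\,d-1}$: this is exactly the second alternative in the statement, and we are done. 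If $\pi$ is not birational, then a general point of $X$ (outside a proper closed set $X_1$) lies on at least two fibres, so by \cite[Lemma 3.3]{Todorov} and Lemma \ref{tiebreaking}, \ref{due}., \ref{tre}., I produce, over a countably dense set, divisors $\sim_{\mathbb{Q}}k(2\lambda K_X)$ with $0<k\le1$ whose unique minimal lc centre through the point has dimension $\le d-2$; cutting it down with the $v_i$ as above yields points carrying divisors $\sim_{\mathbb{Q}}\gamma'K_X$ with $\gamma'<\big(\tfrac{2d}{\alpha}+1\big)R-1+\delta q$, and the first alternative follows whenever $\gamma'<l-1$, i.e. when $\alpha>\dfrac{2dR}{\,l-R\,}$; call this $(\mathrm{II})$.

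It remains to verify that the single hypothesis of the theorem implies both $(\mathrm{I})$ and $(\mathrm{II})$. Put $\varphi(u):=\dfrac{duR}{\,l-uR\,}$, which is increasing in $u$ on $\{uR<l\}$. A direct computation gives $1+\tfrac{d-1}{\beta_1}=\tfrac{l}{R}$ and $1+\tfrac{d-1}{\beta_2}=\tfrac{2l}{l+R}$, hence $\varphi\!\big(1+\tfrac{d-1}{\beta_2}\big)=\tfrac{2dR}{l-R}$, which is precisely $(\mathrm{II})$; moreover $\overline\beta>\beta_1$ forces $(1+(d-1)/\tilde\beta)R<l$, so the displayed bound on $\alpha$ is exactly $\varphi\!\big(1+\tfrac{d-1}{\tilde\beta}\big)$. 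If $\overline\beta\le\beta_2$ then $\tilde\beta=\overline\beta$, the hypothesis is literally $(\mathrm{I})$, and since $1+\tfrac{d-1}{\overline\beta}\ge\tfrac{2l}{l+R}$ monotonicity of $\varphi$ gives $(\mathrm{II})$ a fortiori; if $\overline\beta>\beta_2$ then $\tilde\beta=\beta_2$, the hypothesis is literally $(\mathrm{II})$, and since now $1+\tfrac{d-1}{\overline\beta}<\tfrac{2l}{l+R}$ the same monotonicity yields $(\mathrm{I})$. In every case one of the two alternatives of the statement holds. I expect the only real difficulty to be bookkeeping: keeping the various subsets countably dense through the repeated stratifications and dimension reductions (the role of Lemma \ref{general set}, Lemma \ref{tanti divisori}, and the ``$\bigcup U'_x$'' argument), and the numerical reconciliation just sketched, in which the cutoff $\beta_2=\frac{(d-1)(l+R)}{l-R}$ is exactly the value making one displayed condition absorb both the ``large-volume codimension-one centre'' bound $(\mathrm{I})$ and the ``two lc centres through the general point'' bound $(\mathrm{II})$; conceptually nothing new beyond Theorems \ref{threefold} and \ref{dimensionealta} is needed.
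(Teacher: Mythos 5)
Your proposal is correct and is exactly the argument the paper intends: the paper gives no written proof of Theorem \ref{fivefoldnonvanishing} beyond the remark that it is "obtained merging together the proofs of Theorem \ref{threefold} and Theorem \ref{dimensionealta}", and your merge — stratify by $\dim V_x$, split the codimension-one stratum by $\vol(V_x)\lessgtr\overline\beta^{\,d-1}$, run McKernan's fibration dichotomy in the small-volume case, and cut down with the $v_i$ elsewhere — is that merge, with the bookkeeping done. Your numerical reconciliation is also right: $1+(d-1)/\beta_2=2l/(l+R)$ makes $\varphi\bigl(1+(d-1)/\beta_2\bigr)=2dR/(l-R)$, so the single displayed hypothesis with $\tilde\beta=\min\{\overline\beta,\beta_2\}$ dominates both the large-volume bound and the two-centres bound.
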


Analogously for the birationality of pluricanonical maps:

\begin{theorem} \label{fivefoldbirationality}
Let $X$ be a smooth projective variety of general type of dimension $d$ and such that $\vol(X) > \alpha^d$. Let $\Pi$ be a very general subset of $X$ and, for $i=1, \ldots, d-2$, let $v_i \in \mathbb{Q}^+$ such that $\vol(Z) > v_i$ for every $Z \subset X$  subvariety of dimension $i$ passing through a point $x \in \Pi$. Let $\mu_i:=\frac{i}{\sqrt[i]{v_i}} $, $r_i:=\sqrt[i]{2} \mu_i$ and $P:=\prod_{i=1}^{d-2} (1+r_i)$. Let $l$ be a positive integer, $l > 2P-1$. Let $\beta_1:=\frac{2\sqrt[d-1]{2}(d-1)P}{l+1-2P}$, $\beta_2:=\frac{\sqrt[d-1]{2}(d-1)(l+1+4P)}{2(l+1-2P)}$. For all $\overline{\beta} > \beta_1$, setting $\tilde{\beta}:=\min\{\overline{\beta}, \beta_2\}$, if $$\alpha > \frac{d\sqrt[d]{2}(1+\sqrt[d-1]{2}(d-1)/ \tilde{\beta})P}{l+1-2(1+\sqrt[d-1]{2}(d-1)/\tilde{ \beta})P}$$ then either $|lK_X|$ gives a birational map or $X$ is birational to a fibre space $X''$, with $f: X'' \rightarrow B$, where $B$ is a curve, such that the volume of the general fibre is $\leq {\overline{\beta}}^{d-1}$.
\end{theorem}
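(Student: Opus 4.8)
The plan is to run the argument of Theorem~\ref{birthreefold} in dimension $d$, using the full inductive machinery of Takayama (as in Theorem~\ref{birationalitydimalta}) in place of the ad hoc cutting down of lc centres, and simply \emph{stopping} once a fibre space with small fibres is produced, rather than trying --- as in Proposition~\ref{propositionfibration} --- to recover pluricanonical sections on the fibres. First I would fix $0<\epsilon\ll 1$, pass to a birational model $\mu\colon X'\to X$ carrying an approximate Zariski decomposition $\mu^*(K_X)\sim_{\mathbb Q}A+E$ with $A$ ample and $E$ effective (\cite[Theorem 3.1]{Takayama}), and record $K_{X'}\sim_{\mathbb Q}A+E+\mathrm{Exc}(\mu)$ with $E+\mathrm{Exc}(\mu)$ effective. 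Since $X$ is not $g$-countably dense (so $g\ge2$), Remark~\ref{notcountablydense} yields a very general $\Lambda\subseteq X'$ all of whose points lie only on curves of genus $\ge g$ and on subvarieties of general type; I also delete $\mathbb B_+(K_{X'})$. Using $\vol(X)>\alpha^d$ and \cite[Lemma 5.4]{Takayama}, for any two points $x,y$ in a very general $U\subseteq\Lambda$ one produces $D_1\sim_{\mathbb Q}a_1A$ with $a_1<\sqrt[d]{2}\,d/(\alpha(1-\epsilon))$ that is lc but not klt at some $p(x,y)\in\{x,y\}$ and whose non-klt locus either has codimension $>1$ at $p(x,y)$ or has a codimension-$1$ component $V_{p(x,y)}$ through $p(x,y)$.

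Now fix $\overline\beta>\beta_1$ and apply Lemma~\ref{general set}(4) to the set $U':=\{\,p(x,y):\codim Z(\mathcal J(D_1))=1\text{ at }p(x,y)\text{ and }\vol(V_{p(x,y)})\le\overline\beta^{\,d-1}\,\}$: either $U\setminus U'$ contains a very general subset of $X'$, or $U'$ is countably dense. In the first case, for two very general $x,y$ I would run the inductive steps of \cite[Proposition 5.3, Lemmas 5.5 and 5.8]{Takayama}: at the top stage the codimension-$1$ centre has volume $>\overline\beta^{\,d-1}$, so it behaves like a $(d-1)$-fold of general type with effective parameters $(d-1)/\overline\beta$ and $\sqrt[d-1]{2}(d-1)/\overline\beta$, while the lower stages use the hypotheses $\vol>v_i$, i.e.\ the $\mu_i,r_i$ for $i\le d-2$. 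This gives an effective $D\sim_{\mathbb Q}aA$ with $x$ or $y$ isolated in $Z(\mathcal J(X',D))$ and, as $\epsilon\to0$, $a<2(1+\sqrt[d-1]{2}(d-1)/\overline\beta)P-2+\sqrt[d]{2}\,d\,(1+\sqrt[d-1]{2}(d-1)/\overline\beta)P/\alpha+o(1)$. Replacing $D$ by $D+(l-1)(E+\mathrm{Exc}(\mu))$ and invoking Nadel vanishing exactly as in the proof of Lemma~\ref{nonvanishing}, $|lK_{X'}|$ (hence $|lK_X|$) separates $x$ and $y$ once $l\ge[a]+2$, i.e.\ once $a<l-1$; solving for $\alpha$ produces precisely the bound $\alpha>d\sqrt[d]{2}(1+\sqrt[d-1]{2}(d-1)/\overline\beta)P/(l+1-2(1+\sqrt[d-1]{2}(d-1)/\overline\beta)P)$, whose denominator is positive exactly because $\overline\beta>\beta_1$ and $l>2P-1$.

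In the second case $U'$ is countably dense, and \cite[Lemma 3.2]{McK} produces $X''\xrightarrow{\pi}X'$ with a fibration $f\colon X''\to B$ over a curve, $\pi$ generically finite, such that the $\pi$-image of a general fibre is the codimension-$1$ centre $V_x$ with $\vol(V_x)\le\overline\beta^{\,d-1}$. If $\pi$ is birational one may take $X''=X'$ with general fibre smooth, minimal, of general type and of volume $\le\overline\beta^{\,d-1}$ (arguing as in \cite{Todorov}); this is the second alternative of the statement, and we are done. If $\pi$ is not birational, then --- exactly as in Theorem~\ref{birthreefold}, via \cite[Lemma 3.3]{Todorov} and Lemma~\ref{tiebreaking} --- a general point of $X'$ lies on at least two lc centres, so one may skip the top inductive stage at the cost of replacing $D_1$ by a bounded multiple; running the remaining steps of \cite[Proposition 5.3]{Takayama} with the parameters $r_1,\dots,r_{d-2}$ yields an effective $D'\sim_{\mathbb Q}a'''A$ isolating $x$ or $y$, with $a'''$ bounded by a quantity involving $g$ (through $P$), $l$ and $\alpha$ but \emph{not} $\overline\beta$; $a'''<l-1$ then gives a $\overline\beta$-independent lower bound on $\alpha$. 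The role of $\tilde\beta=\min\{\overline\beta,\beta_2\}$ is to package all three cases into a single inequality: the Case-A bound is decreasing in $\overline\beta$, so evaluating it at $\tilde\beta$ dominates its value at $\overline\beta$, while $\beta_2$ is chosen so that the Case-A bound at $\beta_2$ equals the $\pi$-not-birational bound; hence $\alpha>d\sqrt[d]{2}(1+\sqrt[d-1]{2}(d-1)/\tilde\beta)P/(l+1-2(1+\sqrt[d-1]{2}(d-1)/\tilde\beta)P)$ suffices in every case, and $l>2P-1$ keeps the denominators positive.

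I expect the main obstacle to be the bookkeeping through Takayama's nested recursion: one must verify that its top stage may legitimately be carried out with the \emph{variable} volume bound $\overline\beta^{\,d-1}$ while the remaining stages use the \emph{fixed} bounds $v_i$, that every intermediate lc centre stays of general type and outside the augmented base locus (which is precisely why one works over $\Lambda$ and outside $\mathbb B_+$), and --- most delicately --- that the two-lc-centre substitute for the top stage in the $\pi$-not-birational subcase yields a bound swallowed by the main one after the cap at $\beta_2$; the explicit shape $\beta_2=\sqrt[d-1]{2}(d-1)(l+1+4P)/(2(l+1-2P))$ should emerge from that comparison.
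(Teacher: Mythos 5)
Your proposal is correct and follows exactly the route the paper intends: the paper gives no written proof of Theorem \ref{fivefoldbirationality}, stating only that it is obtained by merging the proofs of Theorem \ref{birthreefold} (the dichotomy between Takayama's inductive separation of points and McKernan's fibration, here terminated at the fibration rather than continued via Proposition \ref{propositionfibration}) with the $d$-dimensional recursion of Theorem \ref{birationalitydimalta}, which is precisely your argument. Your reverse-engineering of $\beta_2$ as the value where the small-volume bound meets the two-lc-centre bound, and of $\tilde{\beta}$ as the device packaging the cases into one inequality, reproduces the constants in the statement correctly.
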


\section{Acknowledgements}
This paper is part of my Ph.D.\ thesis. I would like to warmly thank my supervisor, prof. Angelo F. Lopez, for his great patience and helpfulness and for all the conversations we had. I would also like to thank G. Pacienza who first introduced me to Todorov's work.
\bibliographystyle{plain}
\bibliography{bibliography}

\end{document}